\newtheorem{theorem}{Theorem}[section]
\newtheorem{claim}[theorem]{Claim}
\newtheorem{lemma}[theorem]{Lemma}
\newtheorem{proposition}[theorem]{Proposition}
\newtheorem{corollary}[theorem]{Corollary}
\theoremstyle{definition}
\newtheorem{definition}[theorem]{Definition}
\newtheorem{question}[theorem]{Question}
\theoremstyle{remark}
\newtheorem{remark}[theorem]{Remark}
\def\l{{\langle}}
\def\r{{\rangle}}
\def\mathunderaccent#1#2 {\let\theaccent#1\skewfactor#2
\mathpalette\putaccentunder}
\def\putaccentunder#1#2{\oalign{$#1#2$\crcr\hidewidth
\vbox to.2ex{\hbox{$#1\skew\skewfactor\theaccent{}$}\vss}\hidewidth}}
\def\smallbox#1{\leavevmode\thinspace\hbox{\vrule\vtop{\vbox
   {\hrule\kern1pt\hbox{\vphantom{\tt/}\thinspace{\tt#1}\thinspace}}
   \kern1pt\hrule}\vrule}\thinspace}
\DeclareMathOperator{\id}{id}
\DeclareMathOperator{\crit}{crit}
\DeclareMathOperator{\Sp}{Sp}
\DeclareMathOperator{\dpt}{dp}
\DeclareMathOperator{\Cub}{Cub}
\newcommand{\cf}{{\rm cf}}
\def\Ult{{\rm Ult}}
\title{Measures that violate the generalized continuum hypothesis}
\author{Tom Benhamou}
\address[Benhamou]{Department of Mathematics, Rutgers University, Piscataway NJ 08854-
8019, USA.}
\email{tom.benhamou@rutgers.edu}
\thanks{The research of the first author was supported by the National Science Foundation under Grant
No. DMS-2346680}
\author{Gabriel Goldberg}
\address[Goldberg]{Departement of Mathematics, UC Berkeley, Berkeley, CA 94720-3840 USA }
\thanks{The research of the second author was supported by the National Science Foundation under Grant
No. DMS-2401789}
\email{ggoldberg@berkeley.edu}
\subjclass[2010]{03E45, 03E65, 03E55, 06A07, 	03E17}
\begin{document}
\begin{abstract}
    A simple \(P_\lambda\)-point on a regular cardinal \(\kappa\) is a uniform ultrafilter on \(\kappa\) with a mod-bounded decreasing generating sequence of length \(\lambda\). 
    We prove that if there is a simple $P_\lambda$-point ultrafilter over $\kappa>\omega$, then $\lambda=\mathfrak{d}_\kappa=\mathfrak{b}_\kappa=\mathfrak{u}_\kappa=\mathfrak{r}_\kappa=\mathfrak{s}_\kappa$. We show that such ultrafilters appear in the models of \cite{SimonOmer,BROOKETAYLOR201737}. We improve the lower bound for the consistency strength of the existence of a $P_{\kappa^{++}}$-point to a $2$-strong cardinal. Finally, we apply our arguments to obtain non-trivial lower bounds for (1) the statement that the generalized tower number $\mathfrak{t}_\kappa$ is greater than $\kappa^+$ and $\kappa$ is measurable, (2) the preservation of measurability after the generalized Mathias forcing, and (3) variations of filter games of \cite{NIELSEN_WELCH_2019,HolySchlicht:HierarchyRamseyLikeCardinals,MagForZem} in the case $2^\kappa>\kappa^+$. 
\end{abstract}
\maketitle
\section{introduction}
For a cardinal $\lambda$, a point $x$ in a topological space $X$ is called a $P_\lambda$-point if the intersection of fewer than $\lambda$-many open neighborhoods of $x$ contains an open neighborhood of $x$. Of course, every isolated point is a $P_\lambda$-point for every $\lambda$. Interpreting this definition in the space $U(Y)$ of uniform ultrafilters on \(Y\) gives rise to the notion of a $P_\lambda$-point ultrafilter, which translates to the following combinatorial condition: a uniform ultrafilter $U$ over $Y$ is a $P_\lambda$-point if the poset $(U,\supseteq^*)$ is $\lambda$-directed.\footnote{For $A,B\subseteq Y$, the relation $A\subseteq^* B$ stands for $|A\setminus B|<|Y|$.} Namely, for any $\mu<\lambda$ and any collection $\l X_i\mid i<\mu\r\subseteq U$ there is $X\in U$ such that $X\subseteq^* X_i$ for all $i<\mu$. This type of ultrafilter on $\omega$ has been studied in numerous papers (e.g. \cite{BlassMildenberger,BlassShelah,BrendleShelah,Nyikos2020-NYISUA}). On regular uncountable cardinals, relatively little is known. Baker and Kunen \cite{BakerKunen} have some constructions of such ultrafilters and lately the first author \cite{tomCohesive} used such ultrafilters to address a question of Kanamori regarding cohesive ultrafilters from \cite{Kanamori1978}.

The notion of a $P_\lambda$-point ultrafilter has appeared naturally in classical constructions. The most relevant one here is due to Kunen \cite[Chapter VIII Ex. (A10)]{kunen}, which used a finite support iteration of the Mathias forcing (see  \ref{Def:Mathias forcing}) to construct an ultrafilter on \(\omega\) which is generated by fewer than $\mathfrak{c}$-many sets. The Mathias forcing associated to an ultrafilter \(U\in \beta(\omega)\setminus \omega\) is a ccc forcing that adds a subset of $\omega$ that is eventually included in every set in $U$. By iterating Mathias forcings associated to a carefully chosen sequence of ultrafilters, Kunen adds a $\subseteq^*$-decreasing sequence of sets, and by performing an iteration whose length \(\lambda\) has uncountable cofinality, he produces a sequence that generates an ultrafilter in the generic extension. This ultrafilter is a $P_{\cf(\lambda)}$-point which is moreover simple: a \textit{simple} $P_\mu$-point is an ultrafilter $U$ that has a generating sequence $\l X_i\mid i<\mu\r\subseteq U$ that is $\subseteq^*$-decreasing. 

In an unpublished work, Carlson generalized Kunen's construction to construct a simple $P_\lambda$-point on a measurable cardinal, starting from a supercompact cardinal. This establishes the consistency of a $\kappa$-complete ultrafilter over a measurable cardinal $\kappa$ which is generated by fewer than $2^\kappa$-many sets. The question of the consistency strength of a uniform ultrafilter on a measurable cardinal $\kappa$ which is generated by fewer than $2^\kappa$-many sets remains open.

\subsection*{Cardinal characteristics at measurable cardinals} Unlike the situation on countable sets, the generalized Kunen method is currently the only known method to separate the generalized ultrafilter number $\mathfrak{u}_\kappa$ from the powerset of a measurable cardinal and therefore plays an important role in the landscape of the recent interest in generalized cardinal characteristics \cite{SimonOmer, OmerMoti,BROOKETAYLOR201737,FreidmanThompson,LambieHanson,ZapletalSplitting}.

There are several known techniques for controlling generalized cardinal invariants \cite{FischerGaps,CUMMINGSShelah,BrendleFreidmanMontoya}, all of which are incompatible with controlling the ultrafilter number. Brook-Taylor, Fischer, Friedman, and Montoya \cite{BROOKETAYLOR201737} used variations of the generalized Kunen construction to establish that it is consistent for many generalized cardinal characteristics to be equal yet smaller than $2^\kappa$. Their forcing adds a simple \(P_\lambda\)-point. In Section \S2, we show that the existence of a simple \(P_\lambda\)-point alone implies the equality of many of these characteristics.\footnote{
Let us mention that in the model of \cite{BROOKETAYLOR201737}, there are other characteristics, such as $\mathfrak{i}_\kappa,\mathfrak{p}_\kappa,\mathfrak{a}_\kappa,\mathfrak{t}_\kappa$, and various invariants of category, that also coincide with the value of $\lambda$. We do not address these cardinals in this paper. 
}
More precisely, we prove the following theorem:
\begin{theorem}\label{Main Thm 1}
    Suppose \(\kappa < \lambda\) are regular uncountable cardinals and there is a simple $P_\lambda$-point on \(\kappa\). Then $$\mathfrak{u}_\kappa=\mathfrak{u}^{com}_\kappa=\mathfrak{b}_\kappa=\mathfrak{d}_\kappa=\mathfrak{s}_\kappa=\mathfrak{r}_\kappa = \lambda.$$
    In particular, if \(\mu\neq \lambda\) is regular, then there are no simple \(P_\mu\)-points on \(\kappa\).
\end{theorem}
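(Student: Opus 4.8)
The plan is to extract from the hypothesis a $\le^*$-scale of length $\lambda$ in ${}^\kappa\kappa$, which pins down $\mathfrak b_\kappa$ and $\mathfrak d_\kappa$, and then to read off the remaining invariants from the standard ZFC inequalities among them together with the $P_\lambda$-directedness of $U$. The first move is to notice that the hypothesis forces more structure than it states. If $\langle X_i:i<\lambda\rangle$ witnesses that $U$ is a simple $P_\lambda$-point and $\lambda>\kappa$ is regular, then $U$ is $\kappa$-complete: fewer than $\kappa$ members of $U$ each contain some $X_i$ mod bounded, the $<\kappa$-many indices involved are bounded below $\lambda$ by regularity, and deleting a set of size $<\kappa$ from the corresponding $X_{i^*}$ — which leaves a member of the uniform ultrafilter $U$ — lands inside their intersection. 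Hence $\kappa$ is measurable and $U$ witnesses $\mathfrak{u}^{com}_\kappa\le\lambda$. Let $j_U\colon V\to M=\Ult(V,U)$; since $\crit(j_U)=\kappa<j_U(\kappa)$, fix $g\colon\kappa\to\kappa$ with $[g]_U=\kappa$, so that $W:=g_*U$ is the normal ultrafilter derived from $j_U$ at $\kappa$. The essential point of this step is that $W$ still carries a short $\subseteq^*$-decreasing base: a direct check shows $\langle g[X_i]:i<\lambda\rangle$ is $\subseteq^*$-decreasing (images under $g$ of a $\subseteq^*$-decreasing sequence) and generates $W$ (each $g[X_i]\in W$ since $X_i\subseteq g^{-1}[g[X_i]]\in U$, and if $A\in W$ then $g^{-1}[A]\in U$ contains some $X_i$ mod bounded, whence $g[X_i]\subseteq^* A$). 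If $U$ is already normal one simply takes $W=U$.

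Write $B_i:=g[X_i]$ and let $e_i\colon\kappa\to\kappa$ enumerate $B_i$ increasingly. Since $i<i'$ gives $B_{i'}\subseteq^* B_i$, the sequence $\langle e_i:i<\lambda\rangle$ is $\le^*$-increasing, and I claim it is $\le^*$-cofinal, hence a scale of length $\lambda$. Given $f\colon\kappa\to\kappa$, majorize it by a strictly increasing continuous $f'$; the set $C$ of fixed points of $f'$ is a club, so $C\in W$ by normality, so $B_i\subseteq^* C$ for some $i$. Then for all large $\delta$ one has $e_i(\delta)\in C$ and $\delta\le e_i(\delta)$, whence $f(\delta)\le f'(\delta)\le f'(e_i(\delta))=e_i(\delta)$. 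A scale of regular length $\lambda$ gives $\mathfrak d_\kappa\le\lambda$ at once, and any family of size $<\lambda$ is dominated by a single $e_i$ (the supremum of the relevant indices stays below $\lambda$), so $\mathfrak b_\kappa\ge\lambda$; combined with $\mathfrak b_\kappa\le\mathfrak d_\kappa$ this yields $\mathfrak b_\kappa=\mathfrak d_\kappa=\lambda$.

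For $\mathfrak s_\kappa$ I would appeal to $U$ directly: if $\mathcal S\subseteq[\kappa]^\kappa$ has size $<\lambda$, replace each $S\in\mathcal S$ by whichever of $S,\kappa\setminus S$ lies in $U$, apply $P_\lambda$-directedness to obtain $A\in U$ that is mod-bounded below all of these, and note that then no member of $\mathcal S$ splits $A$; thus $\mathfrak s_\kappa\ge\lambda$, while $\mathfrak s_\kappa\le\mathfrak d_\kappa=\lambda$ by the usual interval construction turning a dominating family into a splitting family. Finally $\lambda=\mathfrak b_\kappa\le\mathfrak r_\kappa\le\mathfrak u_\kappa\le\mathfrak{u}^{com}_\kappa\le\lambda$, using the ZFC inequality $\mathfrak b_\kappa\le\mathfrak r_\kappa$, the fact that a base of any uniform ultrafilter is a reaping family (giving $\mathfrak r_\kappa\le\mathfrak u_\kappa$), and $\mathfrak{u}^{com}_\kappa\le\lambda$ from the first paragraph. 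This gives all six equalities, and the ``in particular'' clause follows: a simple $P_\mu$-point with $\mu>\kappa$ regular would force $\mathfrak b_\kappa=\mu$, hence $\mu=\lambda$, while a simple $P_\mu$-point with $\mu\le\kappa$ would be a uniform ultrafilter on $\kappa$ of character $\le\kappa<\lambda=\mathfrak u_\kappa$, which is impossible.

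The step I expect to be the main obstacle is the descent in the first paragraph — recognizing that one should pass to the derived normal measure $W$, and verifying that the $\subseteq^*$-decreasing generating sequence survives projection by $g$; once a normal ultrafilter with a short $\subseteq^*$-decreasing base is in hand, the scale (and with it $\mathfrak b_\kappa$ and $\mathfrak d_\kappa$, and then the rest) comes essentially for free from the interaction with the club filter. A secondary matter requiring attention is checking that the ZFC inequalities $\mathfrak b_\kappa\le\mathfrak r_\kappa$ and $\mathfrak s_\kappa\le\mathfrak d_\kappa$, as well as the scale-to-splitting-family argument, behave at an uncountable regular $\kappa$ exactly as they do at $\omega$.
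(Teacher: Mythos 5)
Your proof is essentially correct, and the overall architecture (project the simple $P_\lambda$-point to the derived normal measure, interact with the club filter to pin down $\mathfrak b_\kappa$ and $\mathfrak d_\kappa$, get $\mathfrak s_\kappa\geq\lambda$ from $P_\lambda$-directedness and $\mathfrak r_\kappa\leq\lambda$ from the base being unsplittable) matches the paper's. Where you genuinely diverge is in how the loops are closed. The paper closes them with the Raghavan--Shelah theorem for inaccessible $\kappa$ ($\mathfrak d_\kappa\leq\mathfrak r_\kappa$ and $\mathfrak s_\kappa\leq\mathfrak b_\kappa$), both for $\mathfrak b_\kappa\leq\mathfrak u_\kappa$ and for the splitting/reaping equalities. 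You instead use the elementary ZFC inequalities $\mathfrak b_\kappa\leq\mathfrak r_\kappa$ and $\mathfrak s_\kappa\leq\mathfrak d_\kappa$ (the classical interval arguments, which the paper itself invokes as known facts in Propositions \ref{Prop: sk in radin} and \ref{Prop: rk in radin}). Since your scale argument establishes $\mathfrak b_\kappa=\mathfrak d_\kappa=\lambda$ up front, the two pairs of inequalities become interchangeable, and your choice makes the whole theorem independent of the (much harder) Raghavan--Shelah result. That is a real simplification worth being aware of, even if the paper's route has the advantage of factoring through reusable lemmas about $\pi$-characters that it needs later for pseudo-$P_\lambda$-points.

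One technical slip to repair in the cofinality-of-the-scale step: an arbitrary $f:\kappa\to\kappa$ cannot in general be majorized everywhere by a strictly increasing \emph{continuous} function (at a limit $\delta$, continuity forces $f'(\delta)=\sup_{\xi<\delta}f'(\xi)$, which may be smaller than $f(\delta)$), so ``the club of fixed points of $f'$'' is not available as stated. Moreover, even working with the club $C$ of closure points of a strictly increasing majorant, the inequality $f(\delta)\leq e_i(\delta)$ can fail at the club-many $\delta$ where $e_i(\delta)=\delta$. Both issues disappear if you replace $e_i$ by $\delta\mapsto e_i(\delta+1)$, exactly as the paper does in the proof of Proposition \ref{prop:club char of b,d} (where $g(\alpha)=f_C(\alpha+1)$); the shifted functions are still $\leq^*$-increasing, so the rest of your argument goes through unchanged.
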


The effect of a simple $P_\lambda$-point on cardinal characteristics on $\omega$ was already noticed by Nyikos \cite{Nyikos2020-NYISUA} and further investigated by Blass and Shelah \cite{BlassShelah}, and Brendle--Shelah \cite{BrendleShelah}. Nyikos proved that if there is a simple $P_\lambda$-point on $\omega$, then either $\lambda=\mathfrak{b}_\kappa$ or $\mathfrak{d}_\kappa$. In sharp contrast to Theorem \ref{Main Thm 1}, Br\"{a}uninger--Mildenberger \cite{MildenBraun} recently showed that it is consistent for there to be a simple $P_\lambda$-point and a simple $P_\mu$-point for $\mu\neq\lambda$. 

Theorem \ref{Main Thm 1} shows that new methods are needed to obtain a model with a small ultrafilter number $\mathfrak{u}_\kappa$ which is not, for example, equal to the bounding number $\mathfrak{b}_\kappa$ or the dominating number $\mathfrak{d}_\kappa$. (Of course, one can add many Cohen functions to $\kappa$, which blows up $\mathfrak{u}_\kappa$ to \(2^\kappa\) while preserving $\mathfrak{b}_\kappa$.)
\begin{question}
    Is it consistent with a measurable cardinal to have $\mathfrak{d}_\kappa<\mathfrak{u}_\kappa<2^\kappa$? how about $\mathfrak{b}_\kappa<\mathfrak{u}_\kappa<2^\kappa$?
\end{question}

Another method for dealing with cardinal characteristics at the level of a measurable cardinal is the extender-based Magidor--Radin forcing of Merimovich \cite{CarmiRadin}. In particular, Ben-Neria--Gitik \cite{OmerMoti} and Ben-Neria--Garti \cite{SimonOmer} used this technique to obtain results regarding the splitting number $\mathfrak{s}_\kappa$ and reaping number $\mathfrak{r}_\kappa$ at this level. To generalize the above analysis of cardinal characteristics to this framework, we introduce the notion of a \textit{simple pseudo-\(P_\lambda\)-point} (see Definition \ref{def: pi Point})
and show:
\begin{theorem}
    In the model of \cite{SimonOmer}, there is a simple pseudo-$P_{\kappa^+}$-point.
\end{theorem}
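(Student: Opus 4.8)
The plan is to exhibit, inside the Merimovich-style extender-based Magidor--Radin extension \(V[G]\) of \cite{SimonOmer}, both a uniform \(\kappa\)-complete ultrafilter \(U\) and a \(\subseteq^*\)-decreasing sequence \(\l X_i\mid i<\kappa^+\r\subseteq U\) witnessing the clauses of Definition \ref{def: pi Point}. First I would recall the features of the forcing \(\mathbb{P}\) of \cite{SimonOmer} that are needed: it adds a club \(C_G\subseteq\kappa\) together with extender-indexed coordinates along \(C_G\); it adds no new bounded subsets of \(\kappa\); it satisfies the Prikry property; it has good chain-condition and closure properties below \(2^\kappa\); and in \(V[G]\) the cardinal \(\kappa\) remains measurable, carrying a \(\kappa\)-complete ultrafilter extending (the projections of) the measures used to build \(\mathbb{P}\). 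The ultrafilter \(U\) I would use is the one generated in \(V[G]\) by \(\mathrm{Cub}_\kappa\) together with the diagonal traces of the generic — equivalently, the ultrafilter underlying the cardinal-characteristic analysis of \cite{SimonOmer}; this \(U\) is uniform, \(\kappa\)-complete, and extends \(\mathrm{Cub}_\kappa\).

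Next I would produce the generating tower. The extender/measure sequence underlying \(\mathbb{P}\) is indexed so that, below any condition, \(\mathbb{P}\) decomposes as an increasing union \(\bigcup_{i<\kappa^+}\mathbb{P}_i\) of sub-posets determined by the first \(i\) coordinates (interleaving a scale of length \(\kappa^+\) if the raw indexing is not already of this length). For each \(i<\kappa^+\) let \(X_i\in V[G]\) be the subset of \(\kappa\) read off from the \(i\)-th coordinate of the generic — concretely the set of \(\nu\in C_G\) at which the coordinate-\(i\) data lands in the corresponding normal measure on \(\nu\). Coherence of the extender sequence gives \(X_j\subseteq^* X_i\) whenever \(i<j\) (a failure is bounded below the next point of \(C_G\) past the relevant stage), and genericity gives \(X_i\in U\); after thinning, \(\l X_i\mid i<\kappa^+\r\) may be taken genuinely \(\subseteq^*\)-decreasing and contained in \(U\).

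The heart of the matter is verifying the pseudo-\(P\) clause of Definition \ref{def: pi Point}: that every \(X\in U\) is captured by some \(X_i\) modulo \(\mathrm{Cub}_\kappa\) (i.e. modulo the generic club). Given a name \(\dot X\) and a condition \(p\Vdash\dot X\in\dot U\), I would invoke the Prikry lemma for \(\mathbb{P}\) to find \(p'\le p\) and \(i<\kappa^+\) such that \(p'\) decides ``\(\nu\in\dot X\)'' for \(U\)-almost all \(\nu\) using only the first \(i\) coordinates of the generic; normality and coherence of the measures then force \(X_i\setminus\dot X\) to be non-stationary. A density argument over conditions \(p\) then yields the clause in \(V[G]\). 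Much of the combinatorial bookkeeping here is already present in the lemmas of \cite{SimonOmer}, so I would phrase this step as an extraction from their proof rather than repeat it. I expect this last step to be the main obstacle: the extender-based part of \(\mathbb{P}\) adds \(2^\kappa\) new subsets of \(\kappa\), so the tower cannot literally generate \(U\) — consistent with Theorem \ref{Main Thm 1}, since \(\mathfrak{d}_\kappa>\kappa^+\) in this model — and one must both arrange that the Prikry-style capture needs to succeed only modulo \(\mathrm{Cub}_\kappa\), and ensure that the coordinatewise decomposition of \(\mathbb{P}\) is of length exactly \(\kappa^+\) so that the reduction terminates at the right place.
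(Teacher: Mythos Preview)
Your proposal rests on a misreading of Definition~\ref{def: pi Point}. A simple $\pi P_\lambda$-point is an ultrafilter $U$ with $\pi\mathfrak{p}(U)=\lambda=\pi\mathfrak{ch}(U)$: there is a $\pi$-base for $U$ of size $\lambda$ (a family of unbounded sets, \emph{not required to lie in $U$ or to be $\subseteq^*$-decreasing}, each almost contained in every set of $U$ it refines), and every fewer-than-$\lambda$ collection from $U$ has an unbounded pseudo-intersection. There is no ``modulo $\mathrm{Cub}_\kappa$'' clause anywhere, and no decreasing tower inside $U$ is needed. Your entire plan---building a $\subseteq^*$-decreasing sequence $\langle X_i\rangle\subseteq U$ and then arguing that it captures each $X\in U$ up to a club error---is aimed at the wrong target. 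Relatedly, your assertion that $\mathfrak{d}_\kappa>\kappa^+$ in the model of \cite{SimonOmer} is false: the paper computes $\mathfrak{b}_\kappa=\mathfrak{d}_\kappa=\kappa^+$ there (Corollary following Proposition~\ref{Prop: rk in radin}), so your appeal to Theorem~\ref{Main Thm 1} to justify weakening the conclusion is unfounded.

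The paper's argument avoids all direct forcing analysis. It isolates abstract properties (1)--(5) of the extension: intermediate models $M_i$ carrying normal ultrafilters $\mathcal{U}_i$, diagonalizing sets $k_i$, the covering $P(\kappa)=\bigcup_i P(\kappa)^{M_i}$, and coherence $\mathcal{U}_i\subseteq\mathcal{U}_j$ along an unbounded $S\subseteq\lambda$. Then $\mathcal{U}=\bigcup_{i\in S}\mathcal{U}_i$ is the desired ultrafilter: the $k_i$'s form a $\pi$-base of size $\lambda$, and any $\rho<\lambda$ sets in $\mathcal{U}$ lie in some $\mathcal{U}_j$ (by regularity and coherence), whence $k_j$ is their pseudo-intersection. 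This is a two-line proof once the framework is in place; your Prikry-property reduction and coordinatewise name analysis are not needed, and the ``main obstacle'' you anticipate does not arise.
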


\begin{theorem}\label{Main Thm 2}
    If there is a  simple pseudo-$P_\lambda$-point, then
    $$\lambda=\pi\mathfrak{u}_\kappa=\mathfrak{b}_\kappa=\mathfrak{d}_\kappa=\mathfrak{s}_\kappa=\mathfrak{r}_\kappa.$$
\end{theorem}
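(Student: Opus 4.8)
The plan is to follow the proof of Theorem~\ref{Main Thm 1} almost verbatim, replacing ``generating sequence'' by ``$\pi$-base'' and $\mathfrak u_\kappa$ by $\pi\mathfrak u_\kappa$ throughout, and to isolate the single place where the pseudo-$P_\lambda$-point hypothesis (rather than the bare existence of a $\subseteq^*$-decreasing $\pi$-base) does real work.

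First I would fix a simple pseudo-$P_\lambda$-point, witnessed by a $\subseteq^*$-decreasing sequence $\bar X=\langle X_i\mid i<\lambda\rangle$ that is a $\pi$-base for a uniform ultrafilter $U$ on $\kappa$, and extract from Definition~\ref{def: pi Point} the following: $\lambda$ is regular with $\kappa<\lambda$; each $X_i$, being a member of a $\pi$-base of $U$, is $U$-positive, hence of size $\kappa$; and $U$ extends the closed unbounded filter on $\kappa$ (this I take to be part of, or an immediate consequence of, Definition~\ref{def: pi Point}; it holds automatically if the $X_i$ are closed unbounded, since then a club $C\notin U$ would force some $X_i\subseteq^*\kappa\setminus C$, impossible for two clubs). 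Then I would record the one genuinely new soft point: since $\lambda=\cf(\lambda)$ and a union of fewer than $\kappa$ bounded subsets of $\kappa$ is bounded, any family of fewer than $\lambda$ members of $U$ has a common $\subseteq^*$-lower bound of the form $X_i$ (choose indices $i_j$ with $X_{i_j}$ below the $j$-th member, and take $i=\sup_j i_j$).

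Next I would set $e_i(\alpha)=\min(X_i\setminus(\alpha+1))$ for $i<\lambda$. Because $\bar X$ is $\subseteq^*$-decreasing, $\langle e_i\mid i<\lambda\rangle$ is $\le^*$-increasing, and it is dominating: given $g:\kappa\to\kappa$, the club $C_g$ of strict closure points of $g$ lies in $U$, so some $X_i\subseteq^* C_g$, and then $e_i(\alpha)\geq\min(C_g\setminus(\alpha+1))>g(\alpha)$ for all large $\alpha$. Hence $\langle e_i\rangle$ is a scale of length $\lambda$, giving $\mathfrak d_\kappa\leq\lambda$; and since $\lambda$ is regular, fewer than $\lambda$ functions are each dominated by some $e_{i_j}$ and hence all by $e_{\sup_j i_j}$, so $\mathfrak b_\kappa\geq\lambda$, and with $\mathfrak b_\kappa\leq\mathfrak d_\kappa$ this yields $\mathfrak b_\kappa=\mathfrak d_\kappa=\lambda$. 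For the remaining invariants: $\{X_i\mid i<\lambda\}$ is a reaping family --- for any $A\subseteq\kappa$, $U$ decides $A$ and the $\pi$-base has a member $\subseteq^*$ below $A$ or below $\kappa\setminus A$ --- so $\mathfrak r_\kappa\leq\lambda$, while the ZFC inequality $\mathfrak b_\kappa\leq\mathfrak r_\kappa$ gives the reverse, so $\mathfrak r_\kappa=\lambda$. If $\mathcal S$ were a splitting family of size $<\lambda$, I would pick $B_S\in\{S,\kappa\setminus S\}\cap U$ for each $S\in\mathcal S$, use the soft point above to get a single $X_i$ with $X_i\subseteq^* B_S$ for all $S$, and observe that then no $S$ splits $X_i$ although $|X_i|=\kappa$ --- a contradiction; so $\mathfrak s_\kappa\geq\lambda$, and $\mathfrak s_\kappa\leq\mathfrak d_\kappa=\lambda$. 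Finally $\pi\mathfrak u_\kappa\leq\lambda$ since $\bar X$ is a $\pi$-base of $U$ of size $\leq\lambda$, and since every $\pi$-base of every uniform ultrafilter on $\kappa$ is a reaping family we get $\pi\mathfrak u_\kappa\geq\mathfrak r_\kappa=\lambda$.

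The main obstacle will be the step asserting that $\langle e_i\rangle$ is \emph{dominating}: this requires $C_g\in U$ for every $g$, equivalently that $U$ contains a ``$g$-sparse'' set for each $g:\kappa\to\kappa$, and this is precisely where the closed-unbounded / $\kappa$-complete, club-extending nature of a pseudo-$P_\lambda$-point must be used --- the analogue of the use of $\kappa$-completeness of a simple $P_\lambda$-point in Theorem~\ref{Main Thm 1}. (On $\omega$ this mechanism is absent, which is exactly why Nyikos's theorem can only place $\lambda$ at $\mathfrak b$ \emph{or} $\mathfrak d$.) So the real content is verifying that the object furnished by Definition~\ref{def: pi Point} genuinely has this sparseness feature; everything else --- the inequalities $\mathfrak b_\kappa\leq\mathfrak r_\kappa$ and $\mathfrak s_\kappa\leq\mathfrak d_\kappa$, the implication ``$\pi$-base $\Rightarrow$ reaping family'', and the cardinal-arithmetic bookkeeping using $\lambda=\cf(\lambda)$ --- is routine, the only $\omega$-specific ingredient (taking $\omega$-indexed interval partitions) being replaced throughout by continuous sequences of closure points.
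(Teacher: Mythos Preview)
Your argument hinges on the claim that \(U\) extends the club filter, and you correctly flag this as ``the main obstacle.'' Unfortunately it is a genuine gap: Definition~\ref{def: pi Point} defines a simple \(\pi P_\lambda\)-point purely by the equality \(\pi\mathfrak{p}(U)=\lambda=\pi\mathfrak{ch}(U)\), and neither condition forces \(U\) to be \(\kappa\)-complete, let alone to contain every club. The members \(X_i\) of a \(\pi\)-base are merely unbounded subsets of \(\kappa\), not clubs, so your parenthetical argument (``impossible for two clubs'') does not apply. Without \(\Cub_\kappa\subseteq U\), the step ``\(C_g\in U\), hence some \(X_i\subseteq^* C_g\)'' fails, and your scale \(\langle e_i\rangle\) need not be dominating. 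A secondary problem is that you assume from the outset a \(\subseteq^*\)-\emph{decreasing} \(\pi\)-base of length \(\lambda\); the definition only provides a \(\pi\)-base of size \(\lambda\), and since its members need not lie in \(U\), the \(\pi P_\lambda\)-property cannot be applied to them to manufacture a decreasing refinement.

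The paper's route avoids both issues by never touching \(\mathfrak{b}_\kappa,\mathfrak{d}_\kappa\) directly. It first uses Lemma~\ref{Lemma: pi-base is unbounded}, valid for \emph{any} uniform ultrafilter, to get \(\pi\mathfrak{p}(U)\leq\mathfrak{s}_\kappa\) and \(\mathfrak{r}_\kappa\leq\pi\mathfrak{ch}(U)\), and then invokes the Raghavan--Shelah inequalities \(\mathfrak{s}_\kappa\leq\mathfrak{b}_\kappa\) and \(\mathfrak{d}_\kappa\leq\mathfrak{r}_\kappa\) (Theorem~\ref{Thm: Raghavan-Shelah}) to close the chain
\[
\lambda=\pi\mathfrak{p}(U)\leq\mathfrak{s}_\kappa\leq\mathfrak{b}_\kappa\leq\mathfrak{d}_\kappa\leq\mathfrak{r}_\kappa\leq\pi\mathfrak{ch}(U)=\lambda.
\]
So the work you were trying to do by hand (building a scale from the \(\pi\)-base under a club-extension hypothesis) is instead outsourced to Raghavan--Shelah, and this is precisely what lets the argument go through for an arbitrary uniform \(U\).
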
 

We also reduce the large cardinal upper bound of the claim ``$\kappa$ is measurable and $\mathfrak{r}_\kappa<2^\kappa$" below $o(\kappa)=\kappa^{+3}$. 

\subsection*{The consistency strength of a $P_{\kappa^{++}}$-point.}
\cite{tomCohesive} raises the question: what is the consistency strength of the existence of a $P_\lambda$-point for $\lambda>\kappa^+$? As we mentioned, it is possible to start with an indestructible supercompact cardinal and force such an ultrafilter, but this is clearly an overkill since a supercompact cardinal cannot be the first $\alpha$ such that $\alpha$ carries a $P_{\alpha^{++}}$-point. A trivial lower bound comes from the fact that we have to blow up the powerset of a measurable cardinal for such an ultrafilter to exist, and by Mitchell--Gitik \cite{Mithandbook}, this implies an inner model with a measurable cardinal $\kappa$ of Mitchell order $o(\kappa)=\kappa^{++}$. Gitik proved \cite[Thm. 5.2]{tomCohesive} that $o(\kappa)=\kappa^{++}$ is not enough and at least an inner model with a  $\mu$-measurable cardinal is required.\footnote{A $\mu$-measurable cardinal is cardinal $\kappa$ which is the critical point of an elementary embedding $j:V\to M$ such that $\{X\subseteq \kappa\mid \kappa\in j(X)\}\in M$. Such a cardinal is a limit of cardinals $\delta$ with $o(\delta)=2^{2^\delta}$.} Here we improve this lower bound to a $2$-strong cardinal, and more generally:
\begin{theorem}\label{Main them 4}
Suppose that the core model ${K}$ exists, and that in $V$ there is a measurable cardinal $\kappa$ carrying a $P_\lambda$-point for some $\lambda>\kappa^+$ regular. Then there is an inner model with a $\lambda$-strong cardinal.
\end{theorem}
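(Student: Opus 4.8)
The plan is to argue by contradiction via the theory of the core model $K$: suppose no inner model has a $\lambda$-strong cardinal. Since $K$ exists, I would then work with $K$ and its standard features --- $K\models\mathsf{GCH}$, weak covering, rigidity and generic absoluteness, and the principle that any ultrapower embedding of $V$ restricts on $K$ to an iteration embedding of $K$ --- the running assumption being exactly that $\kappa$ is not $\lambda$-strong in $K$. The property of $K$ that does the real work is cardinal arithmetic: since $K\models\mathsf{GCH}$, $|\mathcal{P}(\kappa)^K|=(\kappa^+)^K\le\kappa^+<\lambda$, and more generally $|\mathcal{P}(\alpha)^K|<\lambda$ for all $\alpha<\lambda$ (with a small adjustment when $\lambda$ is a successor cardinal in $K$). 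I would also record the preliminary observation that, because $U$ is a $P_\lambda$-point with $\lambda>\kappa$, any fewer than $\kappa$ sets in $U$ have a pseudo-intersection in $U$, which forces $U$ to be closed under $<\kappa$-intersections; thus $U$ is $\kappa$-complete, and $j=j_U\colon V\to M=\mathrm{Ult}(V,U)$ has critical point $\kappa$ with ${}^{\kappa}M\subseteq M$, so $V_{\kappa+1}\subseteq M$.

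Next I would restrict: $j\restriction K\colon K\to K^M=j(K)$ is elementary, and by rigidity and universality of $K$, $K^M$ is an iterate of $K$ with $j\restriction K$ the iteration map; let $E$ be the extender with critical point $\kappa$ applied to $K$ itself, so $E$ is on the $K$-sequence, $\mathrm{crit}(E)=\kappa$, and $\mathrm{Ult}(K,E)$ is the first iterate. It is convenient to carry along the trace $W=U\cap\mathcal{P}(\kappa)^K$: using $V_{\kappa+1}\subseteq M$ and $\mathcal{P}(\kappa)^{K^M}=\mathcal{P}(\kappa)^K$ (valid since $E$ is at least $(\kappa+1)$-strong), a routine check shows $(K,W)$ is amenable and iterable and that $\mathrm{Ult}(K,W)=\mathrm{Ult}(K,E)$. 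The place where the $P_\lambda$-point enters combinatorially at the outset is this: since $(U,\supseteq^*)$ is $\lambda$-directed and $|W|=\kappa^+<\lambda$, there is $X^*\in U$ with $X^*\subseteq^*Y$ for every $Y\in W$, so $W=\{Y\in\mathcal{P}(\kappa)^K:X^*\subseteq^*Y\}$ is $\subseteq^*$-generated over $K$ by the single external set $X^*$; and likewise, for each $\alpha<\lambda$, the family $U\restriction\mathcal{P}(\alpha)^K$ (coded into $\mathcal{P}(\kappa)$ when $|\alpha|\le\kappa$, and otherwise handled along the iteration) is $\subseteq^*$-generated over $K$ by a single set, by $\mathsf{GCH}$ in $K$ together with directedness.

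The crux is to show that $E$ is $\lambda$-strong, i.e.\ that $K$ and $K^M$ agree below $\lambda$ and the iteration $K\to K^M$ uses no extender with critical point in $[\kappa,\lambda)$ beyond $E$; for then $\kappa$ is $\lambda$-strong in $K$, contradicting the assumption and finishing the proof (and specializing $\lambda=\kappa^{++}$ recovers the $2$-strong lower bound announced in the introduction). The point at which the $P_\lambda$-hypothesis must bite --- rather than the mere inequality $2^\kappa\ge\lambda$, which alone yields only $o^K(\kappa)\ge\lambda$, nowhere near a $\lambda$-strong cardinal --- is a dichotomy for the comparison of $W$ and its iterates with $K$: if the comparison used a critical point $\mu$ with $\kappa<\mu<\lambda$, then the corresponding ultrapower would move $(\mu^+)^K<\lambda$, and pushing the ``single pseudo-intersection'' phenomenon for $X^*$ and its images through that ultrapower --- in the spirit of Gitik's analysis in \cite{tomCohesive} --- should contradict the $\lambda$-directedness of $U$ as witnessed at the regular cardinal $(\mu^+)^K$. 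Turning ``$U$ is $\lambda$-directed'' into a genuine obstruction to short critical points in the $K$-comparison is the main difficulty I anticipate, and I expect it to hinge on a careful simultaneous bookkeeping of the images of $W$, of $X^*$, and of the $\mathsf{GCH}$-driven cardinality bounds of $K$ along the iteration.
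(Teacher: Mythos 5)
Your setup is on target as far as it goes: reduce to the core model, invoke Schindler's theorem to see $j_U\restriction K$ as a normal iteration of $K$ whose first extender $E_0$ has critical point $\kappa$, and use the $P_\lambda$-point property together with GCH in $K$ to find a single set in $U$ that $\subseteq^*$-generates $U\cap P(\kappa)^K$. But the crux of your plan --- proving that the iteration uses no critical point in $(\kappa,\lambda)$ and that $E_0$ is therefore $\lambda$-strong, by deriving a contradiction between a ``short'' critical point $\mu$ and the $\lambda$-directedness of $U$ as witnessed at $(\mu^+)^K$ --- is both unproved and, I believe, pointed in the wrong direction. The tail of the iteration genuinely can have critical points below $\lambda$: in the actual argument the factor embedding $k$ with $j_U\restriction K=k\circ i_{E_0}$ merely has critical point above $(\kappa^{++})^{M_{E_0}}$, an ordinal which that same argument shows is strictly below $(\kappa^{++})^K\leq\kappa^{++}$. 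Nothing about $\lambda$-directedness rules such critical points out, and you give no mechanism for the contradiction you hope to extract; you explicitly defer it as ``the main difficulty I anticipate.'' That deferred step is the entire content of the theorem.

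The paper's argument runs differently. The anti-large-cardinal hypothesis is used not to be contradicted by exhibiting a long extender directly, but to bound the supremum $\gamma$ of the generators of $E_0$: if $\gamma$ exceeded $(\kappa^{++})^{M_{E_0}}$, then coherence and the initial segment condition would put a $2$-strong witness inside $M_{E_0}$. Each measure $E_0(\alpha)$, $\alpha<\gamma$, sits inside an ultrafilter $U_\alpha\leq_{RK}U$ via some $f_\alpha$ and has only $(\kappa^+)^K\leq\kappa^+<\lambda$ members; so the $P_\lambda$-point property, applied twice (once per $\alpha$, then once across all $\gamma<\lambda$ many $\alpha$'s), yields a single $B\in U$ with $E_0(\alpha)=\{X\in P^K(\kappa):f_\alpha[B_\alpha]\subseteq^*X\}$ for every $\alpha$. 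Since $f_\alpha[B_\alpha]\in M_U$ and $P^K(\kappa)=P^{K^{M_U}}(\kappa)$, each $E_0(\alpha)$, hence each derived initial segment $E_0\restriction\alpha$, hence (by a uniqueness/definability argument) $E_0$ itself, lies in $M_U$. The contradiction is then with the maximality and coherence of the core model: $E_0\in M_U$ coheres the extender sequence of $K^{M_U}$, so $E_0\in K^{M_U}$, yet $E_0$ is the first extender applied in the iteration producing $K^{M_U}$ and so cannot appear on its sequence. Neither this reconstruction-inside-$M_U$ step nor the maximality/coherence endgame appears in your proposal, so as written it does not establish the theorem.
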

The proof uses an analysis of the iterated ultrapower of $K$ arising from the restriction of $j_U$ to \(K\), where $U$ is a $P_\lambda$-point. 

Finally, we provide three applications of  this type of lower bound. The first is to show that the statement that $\mathfrak{t}_\kappa > \kappa^+$, where \(\mathfrak{t}_\kappa\) is the generalized tower number associated to a measurable cardinal \(\kappa\),
has consistency strength greater than $o(\kappa)=\kappa^{++}$. This is related to the result of  Zapletal \cite{ZapletalSplitting} and Ben-Neria--Gitik \cite{OmerMoti} that the statement ``$\mathfrak{s}_\kappa>\kappa^{+}$ for a regular $\kappa$" is equiconsistent with $o(\kappa)=\kappa^{++}$. Since $\mathfrak{t}_\kappa\leq \mathfrak{s}_\kappa$, then $\mathfrak{t}_\kappa>\kappa^{+}$ for a regular cardinal $\kappa$ is also at least at the level of $o(\kappa)=\kappa^{++}$. The following improves this when adding the measurability of $\kappa$:
\begin{theorem}
    Suppose that $\kappa$ is measurable and $\mathfrak{t}_\kappa>\kappa^+$ then there is an inner model with a $\mu$-measurable.
\end{theorem}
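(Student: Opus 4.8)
The plan is to reduce the statement to Gitik's theorem \cite[Thm.~5.2]{tomCohesive}: if $\kappa$ is measurable and carries a $P_\lambda$-point for some regular $\lambda>\kappa^+$, then there is an inner model with a $\mu$-measurable cardinal. (One could instead invoke Theorem~\ref{Main them 4} to get an inner model with a $2$-strong cardinal, since $\lambda>\kappa^+$, but only the $\mu$-measurable bound is claimed here.) Thus it suffices to prove that if $\kappa$ is measurable and $\mathfrak{t}_\kappa>\kappa^+$, then $\kappa$ carries a $\kappa$-complete $P_{\kappa^{++}}$-point; note $\mathfrak{t}_\kappa\le\mathfrak{b}_\kappa\le 2^\kappa$, so $\mathfrak{t}_\kappa>\kappa^+$ already forces $2^\kappa\ge\kappa^{++}$.

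First I would fix a normal measure $U$ on $\kappa$ and record two facts: (a) $U$ is itself a $P_{\kappa^+}$-point, because the diagonal intersection of any $\subseteq^*$-decreasing sequence of length $\le\kappa$ from $U$ is a $\subseteq^*$-lower bound lying in $U$; and (b) any uniform ultrafilter on the regular cardinal $\kappa$ with a $\supseteq^*$-directed base is automatically $\kappa$-complete, since a $<\kappa$-sized intersection of base-large sets stays large once one absorbs the (boundedly many, sup still $<\kappa$) errors. Then I would build the $P_{\kappa^{++}}$-point $W$ by a recursion of length $2^\kappa$: enumerating $\mathcal{P}(\kappa)=\langle Y_\xi\mid\xi<2^\kappa\rangle$, construct an increasing chain $\langle\mathcal{B}_\xi\mid\xi<2^\kappa\rangle$ of bases of proper $\kappa$-complete filters, each of size $\le|\xi|+\kappa$, so that $\mathcal{B}_{\xi+1}$ decides $Y_\xi$ and each $\mathcal{B}_\xi$ is $\kappa^{++}$-directed under $\supseteq^*$. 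Deciding $Y_\xi$ and preserving $\kappa^{++}$-directedness at successors is routine; at a limit $\gamma$ with $\cf(\gamma)\le\kappa^+$ one forms $\mathcal{B}^-_\gamma=\bigcup_{\xi<\gamma}\mathcal{B}_\xi$, threads any $\le\kappa^+$-sized subfamily with cofinal indices — along a cofinal sequence of stages, using $\kappa^{++}$-directedness of each earlier $\mathcal{B}_\xi$ — into a single $\subseteq^*$-decreasing sequence of length $\cf(\gamma)\le\kappa^+<\mathfrak{t}_\kappa$, and adjoins a pseudo-intersection of it supplied by $\mathfrak{t}_\kappa>\kappa^+$. By (b), $W=\bigcup_{\xi<2^\kappa}\mathcal{B}_\xi$ is then a $\kappa$-complete $P_{\kappa^{++}}$-point, and Gitik's theorem finishes.

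The hard part is exactly this limit step: the pseudo-intersection $D$ produced from $\mathfrak{t}_\kappa>\kappa^+$ is only guaranteed $\subseteq^*$-below the threaded sequence, not positive with respect to the whole filter generated by $\mathcal{B}^-_\gamma$, so adjoining it naively may collapse the filter. Making this work needs a fusion-style bookkeeping in which the ``decide $Y_\xi$'' and ``adjoin a pseudo-intersection'' steps are interleaved and $D$ is chosen generically, which is legitimate because the filter at stage $\gamma$ is generated by fewer than $2^\kappa$ sets while $\mathfrak{t}_\kappa>\kappa^+$ leaves room to dodge each finite constraint. An alternative that sidesteps the direct construction: run a length-$\kappa^{++}$ iteration of the generalized Mathias forcing guided by a normal measure, which produces a simple $P_{\kappa^{++}}$-point on $\kappa$, and use $\mathfrak{t}_\kappa>\kappa^+$ to ensure this short iteration preserves the measurability of $\kappa$ (this is the preservation analysis behind application~(2) in the abstract); one then applies Gitik's theorem in the extension, and since we may assume there is no inner model with a Woodin cardinal, the core model is absolute to this set-generic extension, so the $\mu$-measurable inner model reflects back to $V$.
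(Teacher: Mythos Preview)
Your primary construction has a genuine gap that you flag but do not close. The threading step at a limit $\gamma$ with $\cf(\gamma)\le\kappa^+$ only works if the cofinal sequence of stages you pick yields a $\subseteq^*$-cofinal sequence in $\mathcal{B}^-_\gamma$; but once $|\mathcal{B}_\xi|\ge\kappa^{++}$ (unavoidable past stage $\kappa^{++}$ when $2^\kappa>\kappa^{++}$), $\kappa^{++}$-directedness of $\mathcal{B}_\xi$ no longer supplies a single $\subseteq^*$-minimum, so your threaded sequence of length $\le\kappa^+$ need not be cofinal in $\mathcal{B}^-_\gamma$, and its pseudo-intersection need not be positive against the rest of the base. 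The ``fusion-style bookkeeping'' you gesture at is precisely the missing ingredient, and it is not clear that $\mathfrak{t}_\kappa>\kappa^+$ alone (as opposed to, say, $\mathfrak{p}_\kappa>\kappa^+$ or $\mathfrak{t}_\kappa=2^\kappa$) suffices to carry it out. Your Mathias-iteration alternative is circular: the paper's ``application (2)'' is a \emph{lower bound} on the consistency strength of preserving measurability under Mathias forcing, not a preservation theorem, so it cannot be invoked to argue that the iteration keeps $\kappa$ measurable.

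The paper sidesteps the construction entirely by weakening the target. It observes that Gitik's core-model argument only requires a set $A\in[\kappa]^\kappa$ with $A\subseteq^*X$ for all $X\in U\cap K$ --- $A$ need not lie in $U$ --- so a $\pi P_{\kappa^{++}}$-point already suffices. It then shows directly that any normal measure $U$ is a $\pi P_{\kappa^{++}}$-point whenever $\mathfrak{t}_\kappa>\kappa^+$: given $\langle X_i\mid i<\kappa^+\rangle\subseteq U$, normality lets one refine to a $\subseteq^*$-decreasing $\langle Y_i\mid i<\kappa^+\rangle\subseteq U$ with $Y_i\subseteq^* X_i$, and this cannot be a tower of length $\kappa^+$, so it has a pseudo-intersection, which is then a pseudo-intersection of the original sequence. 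No recursive construction of a new ultrafilter is needed.
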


The second application is to show that the generalization of Kunen's construction cannot be carried from the  assumption of $o(\kappa)=\kappa^{++}$:
\begin{corollary}
    Let $\kappa$ be measurable in $V$, and $U
    \in V$ be a $\kappa$-complete ultrafilter over $\kappa$. Suppose that $V\subseteq M$ is a larger model in which $\kappa$ is measurable and $M$ contains and  $V$-generic set for the generalized Mathias forcing $\mathbb{M}_U$. Then in ${K}$ there is a $\mu$-measurable cardinal.
\end{corollary}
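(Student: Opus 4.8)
The plan is to mimic, for the single generalized Mathias generic, the core‑model analysis behind Theorem \ref{Main them 4}: the Mathias real will play the role that the generating sequence of a $P_\lambda$‑point plays there, and its being a pseudo‑intersection of the ground‑model ultrafilter $U$ — something impossible inside $V$ — is what forces the large‑cardinal conclusion.

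I would first fix the core model and unpack the hypothesis. Since $\kappa$ is still measurable in $M$ we are well below the anti‑large‑cardinal hypothesis guaranteeing that $K$ exists, and $K$ is absolute between $V$ and its set‑generic extensions, so $K^{M}=K^{V}=:K$. Let $A=A_{G}\subseteq\kappa$ be the Mathias generic coded by $G$; recall that $A$ has order type $\kappa$, is cofinal in $\kappa$, satisfies $A\subseteq^{*}X$ for every $X\in U$, and that $V$ contains no set with these properties because $U$ is a $\kappa$‑complete ultrafilter on $\kappa$ in $V$. In $M$ the filter $\langle U\cup\{A\}\rangle$ equals $\{Y\subseteq\kappa:A\subseteq^{*}Y\}$, a proper $\kappa$‑complete filter with the single generator $A$ of size $\kappa$; transporting a normal measure of $M$ along the increasing enumeration of $A$ produces a $\kappa$‑complete ultrafilter $W\in M$ with $A\in W$, hence $U\subseteq W$, and since $U$ is an ultrafilter on $\mathcal{P}(\kappa)^{V}$ we get $W\cap\mathcal{P}(\kappa)^{V}=U$.

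Next I would set up the factorisation of the relevant embeddings restricted to $K$. Let $j_{W}\colon M\to N=\Ult(M,W)$, with $\crit(j_{W})=\kappa$. By the commutativity of the core model with ultrapowers, $K^{N}=\Ult(K,W)$ and $K^{\Ult(V,U)}=\Ult(K,U)$; and because $U=W\cap\mathcal{P}(\kappa)^{V}$, the map $k\colon\Ult(K,U)\to\Ult(K,W)$ sending $[f]_{U}$ to $[f]_{W}$ (for $f\in V$) is well defined and elementary, with $j_{W}\upharpoonright K=k\circ(j_{U}\upharpoonright K)$. By iterability of $K$, both $j_{U}\upharpoonright K$ and $j_{W}\upharpoonright K$ are linear iterated ultrapowers of $K$ with first critical point $\kappa$, and the $W$‑iteration is obtained from the $U$‑iteration by appending the iteration coming from $k$.

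The heart of the matter — and the step I expect to be the main obstacle — is to show that some extender applied along the $W$‑iteration of $K$ witnesses that a cardinal is $\mu$‑measurable in $K$. I would argue by contradiction: if $K$ had no $\mu$‑measurable cardinal, then both iterations are ``tame'' (the relevant $o^{K}$‑values are bounded well below a would‑be $\mu$‑measurable), and the analysis of how subsets of $\kappa$ are captured along such iterations — exactly the argument of Theorem \ref{Main them 4}, but with the single set $A$ in place of a $P_{\lambda}$‑point's generating sequence — would let one recover $A$ from $j_{W}\upharpoonright K$, the ordinal $\kappa$, and the sets of $U$ (the only parameters from $V$ that $W$ contributes), all of which lie in $V$; this yields a $\kappa$‑sized pseudo‑intersection of $U$ inside $V$, contradicting that $U$ is an ultrafilter there. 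The delicate point is to track how the new functions $\kappa\to\mathrm{Ord}$ added by $\mathbb{M}_{U}$ lengthen the $W$‑iteration beyond the $U$‑iteration, and to see that this extra length cannot be realised by extenders below a $\mu$‑measurable — precisely the place where the bound ``$o(\kappa)=\kappa^{++}$ suffices'' breaks down. (Alternatively, if one can arrange that $M$, or an inner model of a further generic extension of it, carries a measurable $\kappa$ together with a $P_{\lambda}$‑point for some regular $\lambda>\kappa^{+}$, then Theorem \ref{Main them 4} applies directly and gives even a $\lambda$‑strong cardinal in $K$.)
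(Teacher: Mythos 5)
Your proposal extracts from the hypothesis only that the Mathias generic $A$ is a cofinal, order-type-$\kappa$ pseudo-intersection of $U$ that does not lie in $V$, and then tries to run the $P_\lambda$-point analysis with $A$ in place of a generating sequence. This cannot work as stated: those properties alone do not imply a $\mu$-measurable in $K$. The paper itself points this out (the remark following Theorem \ref{thm: continuity points}): after Radin forcing with a repeat point, $\kappa$ remains measurable and the successor points of the Radin club diagonalize a ground-model normal measure, yet a repeat point is far below a $\mu$-measurable in consistency strength. So any argument that uses only ``$A$ diagonalizes $U$ and $A\notin V$'' proves a false statement. The ingredient you are missing is Proposition \ref{Prop: Properties of mathias generic set}(2): genericity for $\mathbb{M}_U$ (after projecting to a normal ultrafilter via Lemma \ref{Lemma: Proj}) forces $A\cap \mathrm{Lim}(A)$ to be unbounded in $\kappa$, and this closure property is exactly what separates the Mathias generic from the Radin counterexample.

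The paper's actual engine is Theorem \ref{thm: continuity points}, and the limit points are used as follows. Fix any $\kappa$-complete ultrafilter $W$ in the extension and write $j_W\restriction K$ as a normal iteration. Every $\alpha\in j_W(A)\setminus\kappa$ is of the form $i_{0,\gamma}(\kappa)$, and the first extender applied with critical point $\geq\alpha$ has derived normal measure $i_{0,\gamma'}(U)$. A point $\alpha^*\in j_W(A)\cap\mathrm{Lim}(j_W(A))$ above $\kappa$ therefore sits at a \emph{limit} of such stages; assuming no $\mu$-measurable, each of those extenders is equivalent to its derived normal measure, and the measure applied at the supremum stage is definable in $M_W$ as the collection of subsets of $\alpha^*$ containing a tail of $j_W(A)\cap\alpha^*$. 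Maximality of $K$ then places this measure inside its own ultrapower, a contradiction. Note also that your proposed endgame --- reconstructing $A$ from $j_W\restriction K$ and parameters of $V$, thereby producing a pseudo-intersection of $U$ inside $V$ --- is not the contradiction the method yields and is not justified: $j_W\restriction K$ is determined by $W\in M$, not by $V$-objects, and nothing forces the reconstruction to land in $V$. The contradiction in both Theorem \ref{Main them 4} and Theorem \ref{thm: continuity points} is internal to the target model (an extender or measure is shown to belong to a model built after it was already used), not an appeal to $U$ being an ultrafilter in $V$.
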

Hence if one wishes to obtain a small ultrafilter number at a measurable cardinal from optimal assumptions, then a new method is required.

The third application relates to the \textit{filter games} of Holy-Schlicht \cite{HolySchlicht:HierarchyRamseyLikeCardinals}, Nielsen-Welch \cite{NIELSEN_WELCH_2019} and Foreman-Magidor-Zeman \cite{MagForZem}. These games revolve around the following idea: two players, Player I and Player II take turns. First, Player I plays a submodel \(M\) of \(H(\kappa^+)\) of size $\kappa$ and Player II responds with an object that determines a $\kappa$-complete (or even normal) ultrafilter on that model. In one variant of the game, the object played by Player II is an \(M\)-ultrafilter, but in another variant, Player II is required to play a single set, external to \(M\), that generates an \(M\)-ultrafilter modulo bounded subsets of \(\kappa\). In the next round, Player I extends \(M\) to a model \(M'\) and Player II must extend the previous ultrafilter to measure sets in \(M'\). 

Under the assumption of $2^\kappa=\kappa^+$, the existence of a winning strategy for Player II (in either of the games) is equivalent to $\kappa$ being measurable. Here, we consider these games of length $\gamma$, where $\gamma\in [\kappa^+,2^\kappa)$. Our main observation is that the consistency strength of a winning strategy for Player II in the game where they play filters is still just a measurable cardinal, and that the consistency strength jumps past $o(\kappa)=\kappa^{++}$ (again, involving $\mu$-measures) if Player II is required to play sets. 

This paper is organized as follows:
\begin{itemize}
    \item In Section~\ref{Crushing Cardinals}, we present our results regarding cardinal characteristics and simple $P_\lambda$-points. In Subsection \ref{SubSection: pi-characters} we focus on the $\pi$-character variations and in  Subsection \ref{Subsection: ExtenderModel} we consider the Extender-based Magidor-Radin model.
    \item In Section~\ref{Section :Lower bounds}, we provide our lower bound on the existence of a $P_\lambda$-point.
    \item In Sections~\ref{Section: Applications},\ref{Sec: Math},\ref{Sec: Filter} we prove our three applications.
\end{itemize}
\subsection*{Notation}\label{Sec: Notations}
For a set $X$ and a cardinal $\alpha$ we let $[X]^\alpha=\{Y\subseteq X\mid |Y|=\alpha\}$. For $A\in [\kappa]^\kappa$ we let $f_A:\kappa\to\kappa$ be the increasing enumeration of the set $A$. Namely, $f_A$ is the inverse of the transitive collapse of $A$. Given two ultrafilters $U,W$ on $X,Y$ resp. we say that $U\leq_{RK} W$ if there is a function $f:Y\to X$ such that $A\in U$ iff $f^{-1}[A]\in W$. A \textit{measurable cardinal} is an uncountable cardinal $\kappa$ such that there is a non-trivial $\kappa$-complete ultrafilter on $\kappa$. Given an ultrafilter $U$ over $X$, we let $j_U:V\to Ult(V,U)\simeq M_U$ be the usual ultrapower embedding associated to an ultrafilter, and $M_U$ is the Mostowski collapse of $Ult(V,U)$ (which we identify with $M_U$ from this point on, whenever $Ult(V,U)$ is well-founded).
A \textit{$\lambda$-supercompact cardinal} is a cardinal $\kappa$ such that there is a $\kappa$-complete fine normal ultrafilter on $P_\kappa(\lambda)$. A \textit{supercompact cardinal} is a $\lambda$-supercompact for every $\lambda$. A \textit{$\lambda$-strong cardinal} is a cardinal $\kappa$ such that there is an elementary embedding $j:V\to M$ with $crit(j)=\kappa$, $M$ is closed under $\kappa$-sequences and $V_{\kappa+\lambda}\subseteq M$. A \textit{$\mu$-measurable cardinal} was defined in footnote 3. This is equivalent to the existence of a  \textit{$\mu$-measure}; that is, a $\kappa$-complete ultrafilter $U$ over $\kappa$ such that $\{X\subseteq\kappa\mid \kappa\in j_U(X)\}\in M_{U}$.\footnote{ For the non-trivial direction, fix $j:V\to M$ as in footnote 3. Since \(|V_\kappa| = \kappa\), it suffices to find an ultrafilter \(U\) over \(V_\kappa\) such that \(\{X\subseteq \kappa : \kappa\in j_U(X)\}\in M_U\). To do this, let $U$ be the ultrafilter over $V_\kappa$ derived from the point $D=\{X\subseteq\kappa\mid \kappa\in j(X)\}\in M$, noting that \(D\in j(V_\kappa)\). 
Let \(k: M_U\to M\) be the factor embedding given by \(k([f]_U) = j(f)(D)\). Let \(\bar D = [\text{id}]_U\).
Then \(k(\bar D) = D\).
In particular, \(\kappa = \bigcup D = k(\bigcup \bar D)\in \text{ran}(k)\), and it follows that \(\crit(k) > \kappa\). Therefore \(k\) is the identity on \(P(\kappa)\), and so the fact that \(k(\bar D) = D\) implies \(\bar D = k^{-1}[D] = D\). Finally, if \(X\subseteq \kappa\), we have \(\kappa\in j_U(X)\) if and only if \(\kappa = k(\kappa) \in k(j_U(X)) = j(X)\); so \(\{X\subseteq \kappa :\kappa\in j_U(X)\} = D\in M_U\), as desired.}

If \(M\) is a transitive model
of \(\text{ZFC}^-\) and \(X\in M\),
an \textit{\(M\)-ultrafilter} on \(X\)
is an ultrafilter \(U\) on the Boolean algebra \(P(X)\cap M\).
The ultrapower of \(M\)
by \(U\) is the quotient
of \(M^X\cap M\) under the  equivalence relation associated to \(U\). An \textit{\(M\)-normal ultrafilter} (also known as an \(M\)-normal \(M\)-ultrafilter)
is an \(M\)-ultrafilter \(U\) on an ordinal \(\kappa\in M\) such that for any sequence \(\langle A_\alpha \rangle_{\alpha < \kappa}\in M\), \(\Delta_{\alpha < \kappa} A_\alpha\in U\).

If \(\kappa\) is a cardinal and \(U\) is an \(M\)-ultrafilter, \(U\) is \textit{\(\kappa\)-complete}
if \(U\) extends to a \(\kappa\)-complete filter in \(V\), or equivalently, if the intersection of fewer than \(\kappa\) elements of \(U\) is nonempty. In general, this is distinct from the notion of \textit{\(M\)-\(\kappa\)-completeness},
which only requires that if
\(\bigcap_{\alpha < \eta} A_\alpha\in U\) whenever \(\langle A_\alpha \rangle_{\alpha < \eta}\in U^{<\kappa} \cap M\).
Since the models \(M\) we consider are usually closed under sequences of length less than \(\kappa\), this distinction will not be important here.
\section{Crushing cardinal characteristics}\label{Crushing Cardinals}
Let $\kappa$ be a regular uncountable cardinal. We denote by ${}^\kappa\kappa$ the set of all functions $f:\kappa\to \kappa$. On ${}^\kappa\kappa$ we have the almost everywhere domination order denoted by $\leq^*$, and defined by  $$f\leq^* g\text{  iff }\exists \alpha<\kappa\ \forall \alpha\leq \beta<\kappa, \ f(\beta)\leq g(\beta).$$
\begin{definition}The generalized bounding and dominating numbers are defined as follows:
    \begin{enumerate}
        \item $\mathfrak{b}_\kappa=\min\{|\mathcal{A}|\mid \mathcal{A}\subseteq \kappa^\kappa\text{ is unbounded in }({}^{\kappa}\kappa,\leq^*)\}$.
        \item $\mathfrak{d}_\kappa=\min\{|\mathcal{A}|\mid \mathcal{A}\subseteq \kappa^\kappa\text{ is dominating in }({}^{\kappa}\kappa,\leq^*)\}$.
    \end{enumerate}
\end{definition}
These cardinal invariants can be characterized using the club filter $$\Cub_\kappa=\{A\subseteq \kappa\mid \exists C \text{ closed unbounded in }\kappa, \ C\subseteq A\}.$$ The \textit{almost inclusion order} denoted by $\subseteq^*$ is defined by $A\subseteq^* B$ iff 
$\exists \alpha<\kappa, A\setminus \alpha\subseteq B.$

\begin{proposition}[Folklore]\label{prop:club char of b,d} \ {}
    \begin{enumerate}
        \item $\mathfrak{b}_\kappa=\min\{|\mathcal{A}|\mid \mathcal{A}\subseteq \Cub_\kappa\text{ is unbounded in }(\Cub_\kappa,\supseteq^*)\}$.
        \item $\mathfrak{d}_\kappa=\min\{|\mathcal{A}|\mid \mathcal{A}\subseteq \Cub_\kappa\text{ is cofinal in }(\Cub_\kappa,\supseteq^*)\}$.
    \end{enumerate}
\end{proposition}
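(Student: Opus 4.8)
The plan is to transfer both $\mathfrak b_\kappa$ and $\mathfrak d_\kappa$ across an explicit order‑reversing dictionary between $({}^\kappa\kappa,\le^*)$ and $(\Cub_\kappa,\supseteq^*)$. To a function $f\in{}^\kappa\kappa$ I associate its club of closure points $c_f=\{\alpha<\kappa:f[\alpha]\subseteq\alpha\}$, which is genuinely a club because $\kappa$ is regular uncountable; to a club $C$ I associate the ``next point'' function $e_C\in{}^\kappa\kappa$ defined by $e_C(\alpha)=\min(C\setminus(\alpha+1))$. Since every member of $\Cub_\kappa$ contains a genuine club, and replacing each member of a $\supseteq^*$-cofinal (resp.\ $\supseteq^*$-unbounded) subfamily of $\Cub_\kappa$ by a club contained in it leaves it $\supseteq^*$-cofinal (resp.\ unbounded) and does not increase its cardinality, I may assume throughout that the subfamilies of $\Cub_\kappa$ under consideration consist of actual clubs.

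The heart of the matter is four elementary monotonicity facts. (i) If $f\le^* g$ then $c_g\subseteq^* c_f$: fixing $\alpha_0$ with $f(\beta)\le g(\beta)$ for all $\beta\ge\alpha_0$ and $\delta=\max(\alpha_0,\sup_{\beta<\alpha_0}f(\beta)+1)<\kappa$, every $\alpha\in c_g$ with $\alpha\ge\delta$ lies in $c_f$. (ii) If $C'\subseteq^* C$ are clubs then $e_C\le^* e_{C'}$: past the bound witnessing $C'\subseteq^* C$, the ordinal $e_{C'}(\alpha)$ belongs to $C$ and exceeds $\alpha$, so $e_C(\alpha)\le e_{C'}(\alpha)$. (iii) $f\le^* e_{c_f}$, indeed $f(\alpha)<e_{c_f}(\alpha)$ for every $\alpha$, since $e_{c_f}(\alpha)$ is a closure point of $f$ strictly above $\alpha$. (iv) $c_{e_C}\subseteq^* C$ for a club $C$: if $\alpha\in c_{e_C}$ then $C\cap\alpha$ is unbounded in $\alpha$, hence $\alpha\in C$ (or $\alpha=0$) by closedness of $C$.

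With these in hand the four required inequalities are routine. For $\mathfrak d_\kappa$: if $\mathcal F\subseteq{}^\kappa\kappa$ is dominating then $\{c_f:f\in\mathcal F\}$ is $\supseteq^*$-cofinal, because given $A\in\Cub_\kappa$ containing a club $D$ and $f\in\mathcal F$ with $e_D\le^* f$, facts (i) and (iv) give $c_f\subseteq^* c_{e_D}\subseteq^* D\subseteq A$; conversely if $\mathcal A$ is a $\supseteq^*$-cofinal family of clubs then $\{e_C:C\in\mathcal A\}$ is dominating, because given $g$ and $C\in\mathcal A$ with $C\subseteq^* c_g$, facts (iii) and (ii) give $g\le^* e_{c_g}\le^* e_C$. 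For $\mathfrak b_\kappa$ one runs the contrapositives of the same two computations: a $g$ bounding $\{e_C:C\in\mathcal A\}$ would yield, via (i) and (iv), a club $c_g$ with $c_g\subseteq^* C$ for all $C\in\mathcal A$, contradicting $\supseteq^*$-unboundedness of $\mathcal A$; and a club $D$ with $D\subseteq^* c_f$ for all $f\in\mathcal F$ would yield, via (ii) and (iii), a single function $e_D$ dominating $\mathcal F$. The only genuine care-points are bookkeeping the mod-bounded thresholds in (i) and (iv), and keeping the direction of $\supseteq^*$ straight --- i.e.\ that a $\supseteq^*$-cofinal subfamily of $\Cub_\kappa$ is one of which every club almost-contains a member, while a $\supseteq^*$-unbounded one is one admitting no club almost contained in all its members; beyond that the argument is mechanical.
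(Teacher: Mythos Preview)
Your proof is correct and follows essentially the same approach as the paper: both directions pass between functions and clubs via the closure-point map $f\mapsto c_f$ and an inverse of enumeration type, and both arguments ultimately rest on the same monotonicity observations. The only cosmetic differences are that you use the next-point function $e_C(\alpha)=\min(C\setminus(\alpha+1))$ where the paper uses the increasing enumeration $f_C$ (and in one place $\alpha\mapsto f_C(\alpha+1)$), and that you prove (2) explicitly whereas the paper cites a reference; your packaging of the four monotonicity facts (i)--(iv) up front makes the argument slightly more modular, but the underlying idea is identical.
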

\begin{proof}
For $(2)$, see \cite[Claim 4.8]{bgp}. For $(1)$, 
let us first prove that \(\mathfrak{b}_\kappa\) 
is bounded above by the size of any unbounded subset of \((\Cub_\kappa,\supseteq^*)\).
Let $\mathcal{A}\subseteq \Cub_\kappa$ we claim that the set $\{f_A\mid A\in\mathcal{A}\}$ of increasing enumerations of sets in \(\mathcal {A}\) is unbounded in $({}^\kappa\kappa,\leq^*)$. Otherwise, let $f$ be a $\leq^*$ bound and let $C_f$ be the club of closure points of $f$. We claim that $C_f\subseteq^* A$ for all $A\in\mathcal{A}$. Indeed, let $\alpha$ be such that for every $\alpha\leq \beta<\kappa$, $f_A(\beta)\leq f(\beta)$. If $\gamma\in C_f\setminus \alpha$, then for every $\beta\in\gamma\setminus \alpha$, $\beta\leq f_A(\beta)\leq f(\beta)<\gamma$. Since \(f_A(\beta)\in A\), it follows that $\gamma$ is a limit point of $A$. Since $A$ is a club, $\gamma\in A$. This proves \(C_f\setminus \alpha\subseteq A\), as desired. 

For the opposite inequality, suppose that $\mathcal{S}$ is unbounded in $({}^\kappa\kappa,\leq^*)$.  Let $\{C_f\mid f\in\mathcal{S}\}$ be the collection of clubs of closure points of elements of $\mathcal{S}$. We claim that $\{C_f\mid f\in\mathcal{S}\}$ is unbounded. Otherwise, suppose that $C\subseteq^* C_f$ for all $f\in \mathcal{S}$. Define $g(\alpha)=f_C(\alpha+1)$. We claim that $g$ dominates $\mathcal{S}$, which would lead to a contradiction. To see this, let $\alpha$ be such that $f_{C}(\alpha)=\alpha=f_{C_f}(\alpha)$ and $C\setminus \alpha+1\subseteq C_f\setminus \alpha+1$.
This implies that for \(\beta \geq \alpha\), \(f_{C_f}(\beta) \leq f_C(\beta)\). 
Therefore given $\beta>\alpha$, notice that $\beta<f_{C_f}(\beta+1)\in C_f$, hence \[f(\beta)<f_{C_f}(\beta+1)\leq f_C(\beta+1)=g(\beta)\qedhere\]
\end{proof}
Given an ultrafilter $U$ on a cardinal $\kappa\geq\omega$, let $j_U:(V,\in)\to (M_U,\in_U)$ be the usual ultrapower construction. Then $(j_U(\kappa),\in_U)=({}^\kappa\kappa/U,<_U)$ is a linear order and $\cf^V(j_U(\kappa))$ is a regular cardinal.
\begin{claim}
    For every uniform ultrafilter $U$ over $\kappa$, $\mathfrak{b}_\kappa\leq \cf^V(j_U(\kappa))\leq \mathfrak{d}_\kappa$.
\end{claim}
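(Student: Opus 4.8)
The plan is to bound $\cf^V(j_U(\kappa))$ from below by $\mathfrak{b}_\kappa$ and from above by $\mathfrak{d}_\kappa$ by exhibiting, on the one hand, a cofinal family in $({}^\kappa\kappa/U,<_U)$ built from a dominating family in $({}^\kappa\kappa,\leq^*)$, and on the other hand, an unbounded family in $({}^\kappa\kappa,\leq^*)$ extracted from a cofinal family in $({}^\kappa\kappa/U,<_U)$. The key observation linking the two orders is that, since $U$ is a \emph{uniform} ultrafilter on $\kappa$, for $f,g\in{}^\kappa\kappa$ we have $f\leq^* g \implies [f]_U \leq_U [g]_U$: if $f(\beta)\leq g(\beta)$ for all $\beta$ above some $\alpha<\kappa$, then $\{\beta : f(\beta)\leq g(\beta)\}\supseteq \kappa\setminus\alpha\in U$ by uniformity. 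Similarly, the diagonal function $\mathrm{id}:\beta\mapsto\beta$ represents an element $[\mathrm{id}]_U$ that is $\geq_U$ the class of every constant function, so $[\mathrm{id}]_U$ is "unbounded from below" in the sense that every bounded-range function lies strictly below it; more to the point, $j_U(\kappa)$ is the order type of $({}^\kappa\kappa/U,<_U)$ and every element of it is represented by some $f\in{}^\kappa\kappa$.

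For the upper bound $\cf^V(j_U(\kappa))\leq\mathfrak{d}_\kappa$: fix a dominating family $\mathcal{A}\subseteq{}^\kappa\kappa$ with $|\mathcal{A}|=\mathfrak{d}_\kappa$, which we may assume consists of increasing functions and is closed under the (finitely many) pointwise-max operations, or at least is cofinal in $\leq^*$. I claim $\{[f]_U : f\in\mathcal{A}\}$ is cofinal in $({}^\kappa\kappa/U,<_U)$. Given any $[g]_U\in j_U(\kappa)$ with $g\in{}^\kappa\kappa$, pick $f\in\mathcal{A}$ with $g\leq^* f$; then by the observation above $[g]_U\leq_U[f]_U$, and replacing $f$ by $f+1$ (still in a cofinal family, since $f\leq^* f+1$ and $f+1\leq^* f'$ for suitable $f'$) we get $[g]_U<_U[f+1]_U$. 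Hence a cofinal subset of $j_U(\kappa)$ has size $\leq\mathfrak{d}_\kappa$, so $\cf^V(j_U(\kappa))\leq\mathfrak{d}_\kappa$.

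For the lower bound $\mathfrak{b}_\kappa\leq\cf^V(j_U(\kappa))$: let $\langle\delta_i : i<\mathrm{cf}^V(j_U(\kappa))\rangle$ be an increasing cofinal sequence in $({}^\kappa\kappa/U,<_U)$, and write $\delta_i=[f_i]_U$ for some $f_i\in{}^\kappa\kappa$. I claim $\{f_i : i<\mathrm{cf}^V(j_U(\kappa))\}$ is unbounded in $({}^\kappa\kappa,\leq^*)$. If not, some $g\in{}^\kappa\kappa$ satisfies $f_i\leq^* g$ for all $i$; but then $[f_i]_U\leq_U[g]_U$ for all $i$, contradicting cofinality of the $\delta_i$ in $j_U(\kappa)$ (note $[g]_U<j_U(\kappa)$ since $[g]_U$ is an element of the linear order $j_U(\kappa)$). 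Therefore $\mathfrak{b}_\kappa\leq\mathrm{cf}^V(j_U(\kappa))$.

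The only real subtlety — and the step I would be most careful about — is making sure the implication $f\leq^* g\Rightarrow[f]_U\leq_U[g]_U$ genuinely uses and only needs uniformity of $U$ (so that $\kappa\setminus\alpha\in U$ for every $\alpha<\kappa$), and symmetrically that every $\delta<j_U(\kappa)$ really is $[f]_U$ for an actual function $f:\kappa\to\kappa$ — this is just the definition of the ultrapower of ${}^\kappa\kappa$, but one should note the elements of $j_U(\kappa)$ are exactly classes of functions into $\kappa$ that are $U$-almost-everywhere into $\kappa$, i.e. all of them since $f:\kappa\to\kappa$. Everything else is a routine chase through the two order relations, plus the standard trick of passing from $f$ to $f+1$ (or to closure points, as in Proposition~\ref{prop:club char of b,d}) to convert $\leq_U$ into strict $<_U$ where needed.
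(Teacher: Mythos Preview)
Your proposal is correct and follows essentially the same argument as the paper: a dominating family projects to a cofinal set in $j_U(\kappa)$, and the representatives of any cofinal sequence in $j_U(\kappa)$ form an unbounded family in $({}^\kappa\kappa,\leq^*)$, both via the implication $f\leq^* g\Rightarrow [f]_U\leq_U[g]_U$ granted by uniformity. The paper's proof is terser and does not bother with the $f+1$ adjustment (cofinality in a linear order only requires $\leq$, not $<$), but the substance is identical.
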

\begin{proof}
    Clearly, if $\mathcal{A}$ is dominating in $({}^\kappa\kappa,\leq^*)$, then $\{[f]_U\mid f\in\mathcal{A}\}$ is cofinal in $j_U(\kappa)$. On the other hand if $\{[f_\alpha]_U\mid \alpha<\lambda\}$ is cofinal in $j_U(\kappa)$. Then it must be unbounded in $({}^\kappa\kappa,\leq^*)$, since if $g:\kappa\to\kappa$ was a bound in $\leq^*$, then $[g]_U<j_U(\kappa)$ would bound $\{[f_\alpha]_U\mid \alpha<\lambda\}$, which is supposed to be cofinal.
\end{proof}

Given a filter $F$ on $\kappa$ we say that $\mathcal{B}$ is a base for $F$ if $\mathcal{B}\subseteq F$ and for every $A\in F$, there is $B\in \mathcal{B}$ such that $B\subseteq^* A$. Define:
\begin{enumerate}
    \item  $\mathfrak{ch}(F)=\min\{|\mathcal{B}|\mid \mathcal{B}\text{ is a base for }F\}$ is the character of $F$.
\item  $\mathfrak{u}_\kappa=\min\{\mathfrak{ch}(U)\mid U\text{ is a uniform ultrafilter on }\kappa\}$ is the ultrafilter number
\item 
 $\mathfrak{u}^{com}_\kappa=\min\{\mathfrak{ch}(U)\mid U\text{ is a }\kappa\text{-complete ultrafilter on }\kappa\}$ is the complete ultrafilter number
\end{enumerate}
The depth spectrum, introduced in \cite{tomCohesive}, is the set
$\Sp_{\dpt}(F)$ of all regular cardinals $\lambda$ for which there exists a $\subseteq^*$-decreasing sequence \(\l X_i\mid i<\lambda\r\subseteq F\) with no \(\subseteq^*\)-lower bound in \(F\).
Also define the \textit{depth of }$F$ by:
$$\mathfrak{t}(F)=\min \Sp_{\dpt}(F)$$
\begin{remark}
    Note that $\mathfrak{t}(F)$ is a regular cardinal. The notation emphasizes that $\mathfrak{t}(F)$ is an analog of the well-known tower number. In \cite[Prop. 4.14]{tomCohesive} it was shown that $\mathfrak{t}(F)=\min(\Sp_T(F,\supseteq^*))$ where $\Sp_T(F,\supseteq^*)=\{\lambda\in Reg\mid \lambda\leq_T (F,\supseteq^*)\}$. Here $\leq_T$ is the well-known Tukey order (see for example \cite{DobrinenTukeySurvey15}).
\end{remark}
In the case $F=\Cub_\kappa$, it is not hard to see that $\mathfrak{t}(\Cub_\kappa)=\mathfrak{b}_\kappa$ and $\mathfrak{ch}(\Cub_\kappa)=\mathfrak{d}_\kappa$.
 \begin{claim}
    Let $U,W$ be ultrafilters. If $U\leq_{RK}W$ then
    $$\mathfrak{t}(W)\leq \mathfrak{t}(U), \ \mathfrak{ch}(U)\leq \mathfrak{ch}(W).$$
\end{claim}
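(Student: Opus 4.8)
The plan is to derive both inequalities from a single Tukey reduction $(U,\supseteq^*)\leq_T (W,\supseteq^*)$, witnessed by an explicit convergent map. (We work throughout with ultrafilters on $\kappa$; the only place the common ground set matters is the cardinality bound below.) Fix a function $f\colon\kappa\to\kappa$ with $A\in U\iff f^{-1}[A]\in W$, as in the definition of $\leq_{RK}$, and define $g\colon W\to U$ by $g(B)=f[B]$. That $g$ really lands in $U$ is immediate: $B\subseteq f^{-1}[f[B]]$, so $B\in W$ forces $f^{-1}[f[B]]\in W$, i.e.\ $f[B]\in U$. The one step carrying any content is an elementary ``pushforward estimate'': if $B\subseteq^* f^{-1}[A]$ then $f[B]\subseteq^* A$. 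This is because $f[B]\setminus A\subseteq f[B\setminus f^{-1}[A]]$, and the image of a set of size $<\kappa$ has size $<\kappa$ (and $\kappa$ is regular, so ``size $<\kappa$'' and ``bounded'' coincide, matching either reading of $\subseteq^*$).

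Using this, I would verify that $g$ is a convergent map, that is, it sends cofinal subsets of $(W,\supseteq^*)$ to cofinal subsets of $(U,\supseteq^*)$. Unwinding the order, a cofinal subset of $(F,\supseteq^*)$ is exactly a base for $F$; so fix a base $\mathcal{B}$ for $W$ and some $A\in U$. Then $f^{-1}[A]\in W$, some $B\in\mathcal{B}$ satisfies $B\subseteq^* f^{-1}[A]$, and the pushforward estimate yields $f[B]\in U$ with $f[B]\subseteq^* A$. Hence $g[\mathcal{B}]=\{f[B]\mid B\in\mathcal{B}\}$ is a base for $U$, and $g$ witnesses $(U,\supseteq^*)\leq_T (W,\supseteq^*)$.

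Finally I would read off the two conclusions from standard properties of the Tukey order. Since a base for $F$ is precisely a cofinal subset of $(F,\supseteq^*)$, we have $\mathfrak{ch}(F)=\cof(F,\supseteq^*)$, and the previous paragraph already shows that $g$ carries a minimal-size base of $W$ to a base of $U$ of no larger size, so $\mathfrak{ch}(U)\leq\mathfrak{ch}(W)$. For the depth, I would invoke the characterization $\mathfrak{t}(F)=\min\big(\Sp_T(F,\supseteq^*)\big)$ recorded in the Remark above: transitivity of $\leq_T$ gives $\Sp_T(U,\supseteq^*)\subseteq\Sp_T(W,\supseteq^*)$, and comparing minima gives $\mathfrak{t}(W)\leq\mathfrak{t}(U)$. (If one wishes to avoid the Tukey formalism for this half, the same idea works in the raw: if $\langle A_i\mid i<\lambda\rangle\subseteq U$ is $\subseteq^*$-decreasing with no $\subseteq^*$-lower bound in $U$, then $\langle f^{-1}[A_i]\mid i<\lambda\rangle\subseteq W$ has no $\subseteq^*$-lower bound in $W$ — a lower bound $C$ would make $f[C]\in U$ a lower bound of the $A_i$ — and a short recursion extracts from this an actual $\subseteq^*$-decreasing witness of length $\leq\lambda$ in $W$.) I do not expect a genuine obstacle here; the only thing demanding care is keeping straight the direction of $\subseteq^*$ under $f[\,\cdot\,]$ versus $f^{-1}[\,\cdot\,]$ and the matching orientation of the poset order, and the argument is insensitive to whether the ultrafilters are uniform or $\kappa$-complete.
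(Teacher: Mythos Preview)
Your proof is correct and follows essentially the same route as the paper: both arguments reduce to the Tukey inequality $(U,\supseteq^*)\leq_T (W,\supseteq^*)$ and then read off $\mathfrak{ch}(U)\leq\mathfrak{ch}(W)$ and, via the characterization $\mathfrak{t}(F)=\min\Sp_T(F,\supseteq^*)$, the inequality $\mathfrak{t}(W)\leq\mathfrak{t}(U)$. The only difference is cosmetic: the paper cites the Tukey reduction as a known fact (Dobrinen--Todorcevic) and calls the character inequality ``well-known,'' whereas you explicitly exhibit the convergent map $B\mapsto f[B]$ and verify the pushforward estimate $B\subseteq^* f^{-1}[A]\Rightarrow f[B]\subseteq^* A$, which simultaneously handles both inequalities.
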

\begin{proof}
    The right inequality is well-known, and the left follows from the fact that if $U\leq_{RK} W$ implies that $(U,\supseteq^*)\leq_{T} (W,\supseteq^*)$ (see for example \cite[Fact 1]{Dobrinen/Todorcevic11}) and therefore $\Sp_T(U,\supseteq^*)\subseteq \Sp_T(W,\supseteq^*)$ which ultimately implies $\mathfrak{t}(W)\leq \mathfrak{t}(U)$.
\end{proof}
\begin{proposition}\label{Prop: d less ch, dp less b} Let $U$ be a $\kappa$-complete ultrafilter over $\kappa$. Then:
\begin{enumerate}
    \item 
     $\mathfrak{d}_\kappa\leq \mathfrak{ch}(U)$.
    \item 
    $\mathfrak{t}(U)\leq \mathfrak{b}_\kappa$ \end{enumerate}
\end{proposition}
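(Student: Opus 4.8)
The plan is to reduce to the case where $U$ is a normal measure, and then exploit that the club filter is contained in every normal measure, so that both inequalities become instances of the standard facts ``$P\leq_T Q \Rightarrow \cf(P)\leq\cf(Q)$'' and ``$P\leq_T Q \Rightarrow \Sp_T(P)\subseteq\Sp_T(Q)$''. For the reduction: since $U$ is $\kappa$-complete and nonprincipal, $\kappa=\crit(j_U)$ and $j_U(\kappa)=[\alpha\mapsto\kappa]_U$, so by {\L}o\'s's theorem every ordinal below $j_U(\kappa)$---in particular $\kappa$ itself---is of the form $[f]_U$ for some $f\colon\kappa\to\kappa$. Fixing such an $f$ with $[f]_U=\kappa$, the ultrafilter $W=\{X\subseteq\kappa\mid\kappa\in j_U(X)\}$ is a normal measure and $f$ witnesses $W\leq_{RK}U$; so by the Claim preceding this proposition, $\mathfrak{ch}(W)\leq\mathfrak{ch}(U)$ and $\mathfrak{t}(U)\leq\mathfrak{t}(W)$, and it suffices to prove (1) and (2) with $W$ in place of $U$.

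The crux is that for the \emph{normal} measure $W$ we have $\Cub_\kappa\subseteq W$, and I claim $(\Cub_\kappa,\supseteq^*)\leq_T(W,\supseteq^*)$. I would verify this by checking that the inclusion $\Cub_\kappa\hookrightarrow W$ is a Tukey map, i.e.\ sends $\supseteq^*$-unbounded sets to $\supseteq^*$-unbounded sets. Suppose $\mathcal{A}\subseteq\Cub_\kappa$ has no $\subseteq^*$-lower bound in $\Cub_\kappa$ but some $X\in W$ has $X\subseteq^*C$ for every $C\in\mathcal{A}$. Since $|X|=\kappa$ and $\kappa$ is regular uncountable, the set $A$ of limit points of $X$ below $\kappa$ is a club; and since each $C\in\mathcal{A}$ is closed, $X\subseteq^*C$ forces $A\subseteq^*C$. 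Thus $A$ is a $\subseteq^*$-lower bound for $\mathcal{A}$ in $\Cub_\kappa$, a contradiction. (Equivalently, $X\mapsto A$ is a monotone map $W\to\Cub_\kappa$ taking a base of $W$ to a base of $\Cub_\kappa$, using that every club lies in $W$; this yields the same reduction, and already gives $\cf(\Cub_\kappa,\supseteq^*)\leq\cf(W,\supseteq^*)$ directly.)

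From $(\Cub_\kappa,\supseteq^*)\leq_T(W,\supseteq^*)$ the conclusions follow: $\mathfrak{d}_\kappa=\mathfrak{ch}(\Cub_\kappa)=\cf(\Cub_\kappa,\supseteq^*)\leq\cf(W,\supseteq^*)=\mathfrak{ch}(W)\leq\mathfrak{ch}(U)$, which is (1); and $\Sp_T(\Cub_\kappa,\supseteq^*)\subseteq\Sp_T(W,\supseteq^*)$ gives $\mathfrak{t}(U)\leq\mathfrak{t}(W)=\min\Sp_T(W,\supseteq^*)\leq\min\Sp_T(\Cub_\kappa,\supseteq^*)=\mathfrak{t}(\Cub_\kappa)=\mathfrak{b}_\kappa$, which is (2). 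I expect the only mildly delicate points to be the reduction to the normal case---which rests on the standard fact that a $\kappa$-complete ultrafilter on $\kappa$ is Rudin--Keisler above a normal measure, where one must check that $\kappa=\crit(j_U)$ and that $\kappa$ is represented by a function $\kappa\to\kappa$---together with the one load-bearing observation that the limit points of a $\subseteq^*$-small subset of a closed $C$ again lie $\subseteq^*$-inside $C$. Everything else is the formal manipulation of the Tukey order already recorded in the paper.
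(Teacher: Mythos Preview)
Your proof is correct and follows essentially the same approach as the paper's: reduce to a normal measure $W\leq_{RK}U$ via the preceding Claim, then use that $\Cub_\kappa\subseteq W$ together with the closure-of-a-$W$-set trick to compare $(\Cub_\kappa,\supseteq^*)$ with $(W,\supseteq^*)$. The only difference is packaging: you phrase the comparison as a single Tukey reduction $(\Cub_\kappa,\supseteq^*)\leq_T(W,\supseteq^*)$ and then read off both inequalities from general Tukey facts, whereas the paper argues each part directly---for (1) it sends a base $\mathcal{B}$ of $W$ to the base $\{\overline{b}:b\in\mathcal{B}\}$ of $\Cub_\kappa$, and for (2) it observes that any $\subseteq^*$-decreasing sequence of clubs shorter than $\mathfrak{t}(W)$ has a lower bound in $W$, whose closure is a club lower bound. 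Your parenthetical ``equivalently, $X\mapsto A$'' is precisely the paper's map for (1), so the underlying arguments are identical.
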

    \begin{proof}
        For $(1)$, let $U^*$ be a normal ultrafilter RK-below $U$, then $\mathfrak{ch}(U^*)\leq \mathfrak{ch}(U)$. Let $\mathcal{B}$ be a base for $U^*$ and  $\mathcal{C}=\{\overline{b}\mid b\in\mathcal{B}\}\subseteq \Cub_\kappa$. We claim that $\mathcal{C}$ is a generating set for $\Cub_\kappa$. Given any club $C$, since $U^*$ is normal, $C\in U^*$ and therefore there is $b\in\mathcal{B}$ such that $b\subseteq^* C$. Since $C$ is closed, $\overline{b}\subseteq^* C$, as wanted.

    For $(2)$, again we may assume that $U$ is normal. Note that every sequence of clubs $\l C_i\mid i<\kappa\r$ for $\kappa<\mathfrak{t}(U)$ has a lower bound in $U$ and therefore the closure of that lower bound would be a club-bound. Hence $\mathfrak{t}(U)\leq \mathfrak{b}_\kappa$. 
\end{proof}
\begin{lemma}\label{Lemma: b less than u less that u com}
    $\mathfrak{b}_\kappa\leq\mathfrak{u}_\kappa\leq \mathfrak{u}^{com}_\kappa$
\end{lemma}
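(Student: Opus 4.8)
The inequality $\mathfrak{u}_\kappa \le \mathfrak{u}^{com}_\kappa$ is immediate from the definitions: a non-trivial $\kappa$-complete ultrafilter $U$ on $\kappa$ is automatically uniform, since if $A \in U$ had $|A| < \kappa$ then applying $\kappa$-completeness to the partition of $A$ into singletons would force some singleton into $U$, making $U$ principal. Hence the class of $\kappa$-complete ultrafilters on $\kappa$ is contained in the class of uniform ultrafilters on $\kappa$, so the minimum of $\mathfrak{ch}$ over the former class is at least the minimum over the latter.

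For $\mathfrak{b}_\kappa \le \mathfrak{u}_\kappa$ the plan is to prove the following generalization of the classical inequality $\mathfrak{b} \le \mathfrak{r}$: for every uniform ultrafilter $U$ on $\kappa$ and every base $\mathcal{B}$ for $U$, the family $\{f_B \mid B \in \mathcal{B}\}$ of increasing enumerations of the sets in $\mathcal{B}$ is $\le^*$-unbounded in ${}^\kappa\kappa$. Granting this, $\mathfrak{b}_\kappa \le |\mathcal{B}|$ for every base $\mathcal{B}$ of every uniform $U$, hence $\mathfrak{b}_\kappa \le \mathfrak{ch}(U)$ for every uniform $U$, and therefore $\mathfrak{b}_\kappa \le \mathfrak{u}_\kappa$.

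To prove the claim, suppose toward a contradiction that $g \in {}^\kappa\kappa$ satisfies $f_B \le^* g$ for every $B \in \mathcal{B}$. Let $\langle c_\xi \mid \xi < \kappa \rangle$ enumerate, increasingly and continuously, the club $C$ of closure points of $g$, and for $\xi < \kappa$ put $I_\xi = [c_\xi, c_{\xi+1})$; by continuity of the enumeration the blocks $I_\xi$ partition $\kappa$. The heart of the argument is that \emph{every $B \in \mathcal{B}$ meets $I_\xi$ for all sufficiently large $\xi < \kappa$}: if $B \cap I_\xi = \emptyset$, set $m = \mathrm{otp}(B \cap c_\xi) \le c_\xi$; then the $m$-th element of $B$ is $\min(B \setminus c_\xi) \ge c_{\xi+1}$, i.e.\ $f_B(m) \ge c_{\xi+1}$, whereas $m < c_{\xi+1}$ and $c_{\xi+1}$ is closed under $g$, so $g(m) < c_{\xi+1} \le f_B(m)$. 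Since $B$ is unbounded in the regular cardinal $\kappa$ we have $\mathrm{otp}(B \cap c_\xi) \to \kappa$ as $\xi \to \kappa$, so for large enough $\xi$ the inequality $f_B(m) > g(m)$ contradicts $f_B \le^* g$. Now fix a partition $\kappa = E_0 \sqcup E_1$ of the index set into two unbounded pieces and let $X = \bigcup_{\xi \in E_0} I_\xi$. By the previous point every $B \in \mathcal{B}$ meets $I_\xi$ for unboundedly many $\xi \in E_0$ and for unboundedly many $\xi \in E_1$; since the blocks are pairwise disjoint and $I_\xi \subseteq \kappa \setminus X$ for $\xi \in E_1$, both $B \cap X$ and $B \setminus X$ are unbounded in $\kappa$, so $B \not\subseteq^* X$ and $B \not\subseteq^* \kappa \setminus X$. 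This contradicts $\mathcal{B}$ being a base for the ultrafilter $U$, because whichever of $X$, $\kappa \setminus X$ lies in $U$ must almost-contain some member of $\mathcal{B}$.

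The only genuine obstacle is the italicized step — that $f_B \le^* g$ forces $B$ to meet every tail block $I_\xi$ — which is precisely where one exploits that $g$, being a putative $\le^*$-bound for all the enumeration functions, has a club of closure points against which the sets of $\mathcal{B}$ cannot be too sparse. Everything else is routine: verifying that the $I_\xi$ partition $\kappa$, choosing the two unbounded index sets $E_0, E_1$, and noting that meeting unboundedly many blocks of each kind rules out $\subseteq^*$-containment in $X$ or in its complement.
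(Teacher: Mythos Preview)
Your proof is correct. The paper does not argue directly: it defers the nontrivial inequality $\mathfrak{b}_\kappa\leq\mathfrak{u}_\kappa$ to two later results, namely that any $\pi$-base of a uniform ultrafilter is an unsplittable family (so $\mathfrak{r}_\kappa\leq\pi\mathfrak{ch}(U)\leq\mathfrak{ch}(U)$) and the Raghavan--Shelah theorem $\mathfrak{d}_\kappa\leq\mathfrak{r}_\kappa$ for inaccessible $\kappa$. Your argument is essentially a direct, self-contained proof of the weaker inequality $\mathfrak{b}_\kappa\leq\mathfrak{r}_\kappa$ (specialized to ultrafilter bases): the block decomposition via closure points of the putative bound $g$ shows that every member of the base eventually meets each block, and then the partition $E_0\sqcup E_1$ produces a single set splitting every $B\in\mathcal{B}$. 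This buys you two things over the paper's route: the argument is elementary and does not forward-reference the Raghavan--Shelah machinery, and it works for any regular uncountable $\kappa$, whereas the paper's cited theorem is stated only for inaccessible $\kappa$ (which suffices for the paper's applications but is stronger than needed here).
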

\begin{proof}
    The nontrivial inequality $\mathfrak{b}_\kappa\leq\mathfrak{u}_\kappa$ will follow from a more general fact regarding the reaping number in Lemma \ref{Lemma: pi-base is unbounded} and Theorem \ref{Thm: Raghavan-Shelah}.
\end{proof}
\begin{definition}
For a uniform filter $F$ over $\kappa$, we say that:
\begin{enumerate}
    \item $F$ is a $P_\lambda$-point if $(F,\supseteq^*)$ is $\lambda$-directed. Namely, if for every $\mathcal{A}\subseteq F$, $|\mathcal{A}|<\lambda$, there is $B\in F$ such that $B\subseteq^* A$ for all $A\in\mathcal{A}$.
    \item $F$ is a simple $P_\lambda$-point if there is a $\subseteq^*$-decreasing sequence $\l X_i\mid i<\lambda\r\subseteq F$ that forms a base for $F$.
    \item $\mathfrak{p}(F)=\min\{\lambda\mid F\text{ is not a }P_{\lambda^+}\text{-point}\}$.
\end{enumerate}  
\end{definition}
Note that $F$ is a simple $P_\lambda$-point if and only if $F$ is a simple $P_{\cf(\lambda)}$-point. Hence we will only consider simple $P_\lambda$-point for regular $\lambda$'s. Also, note that if $U$ is a uniform ultrafilter that is a simple $P_\lambda$-point over \(\kappa\), then $\lambda$ must be at least $\kappa^+$, and therefore $U$ must be $\kappa$-complete. It was proven in \cite{tomCohesive} that $\mathfrak{t}(F)=\mathfrak{p}(F)$. In \cite[Lemma 4.23]{tomCohesive} it was proven that
    $F$ is a simple $P_\lambda$-point if and only if $\mathfrak{t}(F)=\mathfrak{ch}(F)=\lambda$.
\begin{corollary}\label{cor:cub-b-d}
    For a regular  cardinal $\lambda$, $\Cub_\kappa$ is a simple $P_\lambda$-point if and only if $\lambda=\mathfrak{d}_\kappa=\mathfrak{b}_\kappa$.
\end{corollary}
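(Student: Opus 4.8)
The plan is to combine the characterization quoted just above the corollary --- namely that a filter $F$ is a simple $P_\lambda$-point iff $\mathfrak{t}(F)=\mathfrak{ch}(F)=\lambda$ --- with the two facts recorded immediately before it: $\mathfrak{t}(\Cub_\kappa)=\mathfrak{b}_\kappa$ and $\mathfrak{ch}(\Cub_\kappa)=\mathfrak{d}_\kappa$. So the whole statement should fall out by substitution, and the only thing that needs any argument is the direction from the equation $\lambda=\mathfrak{d}_\kappa=\mathfrak{b}_\kappa$ back to ``$\Cub_\kappa$ is a simple $P_\lambda$-point'', plus checking that the hypothesis ``$\lambda$ regular'' meshes correctly with everything.

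In detail, first I would observe the forward direction: if $\Cub_\kappa$ is a simple $P_\lambda$-point for regular $\lambda$, then by the cited Lemma~4.23 of \cite{tomCohesive} we get $\mathfrak{t}(\Cub_\kappa)=\mathfrak{ch}(\Cub_\kappa)=\lambda$; now invoke $\mathfrak{t}(\Cub_\kappa)=\mathfrak{b}_\kappa$ and $\mathfrak{ch}(\Cub_\kappa)=\mathfrak{d}_\kappa$ to conclude $\lambda=\mathfrak{b}_\kappa=\mathfrak{d}_\kappa$. For the converse, suppose $\lambda=\mathfrak{d}_\kappa=\mathfrak{b}_\kappa$ with $\lambda$ regular. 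Then again by $\mathfrak{t}(\Cub_\kappa)=\mathfrak{b}_\kappa=\lambda$ and $\mathfrak{ch}(\Cub_\kappa)=\mathfrak{d}_\kappa=\lambda$ we have $\mathfrak{t}(\Cub_\kappa)=\mathfrak{ch}(\Cub_\kappa)=\lambda$, and applying the ``if'' direction of Lemma~4.23 of \cite{tomCohesive} gives that $\Cub_\kappa$ is a simple $P_\lambda$-point. (One should note that $\mathfrak{t}(\Cub_\kappa)$ is automatically regular, by the remark on $\mathfrak{t}(F)$, which is consistent with the standing convention that simple $P_\lambda$-points are only considered for regular $\lambda$; and $\mathfrak{d}_\kappa \geq \mathfrak{b}_\kappa$ always, so the joint equality $\lambda=\mathfrak{d}_\kappa=\mathfrak{b}_\kappa$ is exactly the non-degenerate case.)

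The one genuinely content-bearing point, which I would make sure to spell out rather than cite as folklore, is the pair of identities $\mathfrak{t}(\Cub_\kappa)=\mathfrak{b}_\kappa$ and $\mathfrak{ch}(\Cub_\kappa)=\mathfrak{d}_\kappa$. The second of these is essentially Proposition~\ref{prop:club char of b,d}(2): a base for $\Cub_\kappa$ in the $\subseteq^*$ sense is the same thing as a cofinal family in $(\Cub_\kappa,\supseteq^*)$, so $\mathfrak{ch}(\Cub_\kappa)=\mathfrak{d}_\kappa$ directly. For the first, $\mathfrak{t}(\Cub_\kappa)$ is the least length of a $\subseteq^*$-decreasing sequence of clubs with no club lower bound; by Proposition~\ref{prop:club char of b,d}(1) the least size of a $\supseteq^*$-unbounded subfamily of $\Cub_\kappa$ is $\mathfrak{b}_\kappa$, and for the club filter any $\subseteq^*$-decreasing sequence of length below $\mathfrak{b}_\kappa$ has a lower bound (diagonal intersection, exactly as in the proof of Proposition~\ref{Prop: d less ch, dp less b}(2)), whereas a $\subseteq^*$-decreasing $\mathfrak{b}_\kappa$-sequence with no lower bound can be extracted from any unbounded family of size $\mathfrak{b}_\kappa$; hence $\mathfrak{t}(\Cub_\kappa)=\mathfrak{b}_\kappa$.

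I do not expect any real obstacle here; the result is a bookkeeping corollary. The only place to be slightly careful is the interplay of regularity conventions --- that ``simple $P_\lambda$-point'' is only defined for regular $\lambda$ while $\mathfrak{b}_\kappa$ and $\mathfrak{d}_\kappa$ are a priori arbitrary cardinals --- but since $\mathfrak{b}_\kappa$ is always regular and the hypothesis already restricts to regular $\lambda$, this causes no trouble. So the proof is short: cite Lemma~4.23 of \cite{tomCohesive}, cite the two club-filter identities (with the one-line justification above), and substitute.
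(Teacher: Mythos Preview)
Your proposal is correct and matches the paper's approach exactly: the paper states the corollary without proof, relying on precisely the two ingredients you identify---the characterization from \cite[Lemma~4.23]{tomCohesive} that $F$ is a simple $P_\lambda$-point iff $\mathfrak{t}(F)=\mathfrak{ch}(F)=\lambda$, and the identities $\mathfrak{t}(\Cub_\kappa)=\mathfrak{b}_\kappa$, $\mathfrak{ch}(\Cub_\kappa)=\mathfrak{d}_\kappa$ recorded just before. One small remark: your appeal to ``diagonal intersection'' for the bound $\mathfrak{t}(\Cub_\kappa)\geq\mathfrak{b}_\kappa$ is unnecessary---any family of clubs of size below $\mathfrak{b}_\kappa$ is bounded in $(\Cub_\kappa,\supseteq^*)$ by the very definition of $\mathfrak{b}_\kappa$ via Proposition~\ref{prop:club char of b,d}(1), so a $\subseteq^*$-decreasing sequence of that length automatically has a club lower bound.
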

\begin{theorem}
    If \(\kappa <\lambda\) are regular uncountable cardinals and $U$ is a simple $P_\lambda$-point ultrafilter on \(\kappa\), then $\lambda=\mathfrak{d}_\kappa=\mathfrak{b}_\kappa=\mathfrak{u}_\kappa=\mathfrak{u}^{com}_\kappa$.
\end{theorem}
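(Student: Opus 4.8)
The plan is to reduce the theorem to the toolkit already assembled in this section: the characterization of simple $P_\lambda$-points in terms of $\mathfrak{t}$ and $\mathfrak{ch}$, Proposition \ref{Prop: d less ch, dp less b}, and Lemma \ref{Lemma: b less than u less that u com}. Once these are in hand the argument is a short chain of inequalities pinning each of the four invariants to $\lambda$.

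First I would note that $\kappa<\lambda$ with both cardinals regular forces $\lambda\geq\kappa^+$, so, as observed right after the definition of simple $P_\lambda$-points, $U$ is $\kappa$-complete, and hence Proposition \ref{Prop: d less ch, dp less b} applies to it. By \cite[Lemma 4.23]{tomCohesive}, $U$ being a simple $P_\lambda$-point is equivalent to $\mathfrak{t}(U)=\mathfrak{ch}(U)=\lambda$. (Alternatively one can read this off directly from a $\subseteq^*$-decreasing base $\langle X_i\mid i<\lambda\rangle$ for $U$: it is a base, so $\mathfrak{ch}(U)\leq\lambda$; and any $\subseteq^*$-decreasing sequence in $U$ of length $\mu<\lambda$ is bounded below by some $X_{i^*}$, where $i^*$ is the supremum of witnesses for the members of the sequence, using regularity of $\lambda$, so $\mathfrak{t}(U)\geq\lambda$.) For the proof we only need $\mathfrak{ch}(U)\leq\lambda$ and $\mathfrak{t}(U)\geq\lambda$.

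Next I would pin down $\mathfrak{b}_\kappa$ and $\mathfrak{d}_\kappa$. Proposition \ref{Prop: d less ch, dp less b}(1) gives $\mathfrak{d}_\kappa\leq\mathfrak{ch}(U)\leq\lambda$, and part (2) gives $\lambda\leq\mathfrak{t}(U)\leq\mathfrak{b}_\kappa$. Combining these with $\mathfrak{b}_\kappa\leq\mathfrak{d}_\kappa$ (which holds since any $\leq^*$-dominating family is $\leq^*$-unbounded, or alternatively from the earlier Claim $\mathfrak{b}_\kappa\leq\cf^V(j_U(\kappa))\leq\mathfrak{d}_\kappa$ applied to our uniform ultrafilter $U$) yields $\lambda\leq\mathfrak{b}_\kappa\leq\mathfrak{d}_\kappa\leq\lambda$, so $\mathfrak{b}_\kappa=\mathfrak{d}_\kappa=\lambda$.

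Finally, for the ultrafilter numbers: $U$ is itself a $\kappa$-complete uniform ultrafilter on $\kappa$ with $\mathfrak{ch}(U)\leq\lambda$, so by the very definitions $\mathfrak{u}_\kappa\leq\mathfrak{u}^{com}_\kappa\leq\mathfrak{ch}(U)\leq\lambda$. For the matching lower bound, Lemma \ref{Lemma: b less than u less that u com} gives $\mathfrak{b}_\kappa\leq\mathfrak{u}_\kappa$, whence $\lambda=\mathfrak{b}_\kappa\leq\mathfrak{u}_\kappa\leq\mathfrak{u}^{com}_\kappa\leq\lambda$, and all four invariants equal $\lambda$. The only substantive ingredient is the inequality $\mathfrak{b}_\kappa\leq\mathfrak{u}_\kappa$ of Lemma \ref{Lemma: b less than u less that u com}; that is where the real work lies, and it is quarantined in the reaping-number analysis (Lemma \ref{Lemma: pi-base is unbounded} together with the Raghavan--Shelah-type Theorem \ref{Thm: Raghavan-Shelah}). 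Everything in the present proof is just bookkeeping around that lemma, so I do not expect any obstacle here.
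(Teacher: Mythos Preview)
Your proof is correct and follows essentially the same route as the paper: both use the characterization $\mathfrak{t}(U)=\mathfrak{ch}(U)=\lambda$ for a simple $P_\lambda$-point, invoke Proposition~\ref{Prop: d less ch, dp less b} for $\mathfrak{t}(U)\leq\mathfrak{b}_\kappa$ and $\mathfrak{d}_\kappa\leq\mathfrak{ch}(U)$, and Lemma~\ref{Lemma: b less than u less that u com} for $\mathfrak{b}_\kappa\leq\mathfrak{u}_\kappa\leq\mathfrak{u}^{com}_\kappa$. Your version is a bit more explicit (spelling out $\mathfrak{b}_\kappa\leq\mathfrak{d}_\kappa$ and the $\kappa$-completeness of $U$), but the argument is the same.
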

\begin{proof}
    Indeed, by Proposition \ref{Prop: d less ch, dp less b}(2), and Lemma \ref{Lemma: b less than u less that u com}, $\lambda=\mathfrak{t}(U)\leq \mathfrak{b}_\kappa\leq \mathfrak{u}_\kappa$. Also, by \ref{Prop: d less ch, dp less b}(1) $\mathfrak{d}_\kappa\leq \mathfrak{ch}(U)=\lambda$ and clearly $\mathfrak{u}_\kappa\leq \mathfrak{u}^{com}_\kappa\leq \mathfrak{ch}(U)=\lambda$. So by the fact that $U$ is a simple $P_\lambda$-point we get the desired equality. 
\end{proof}
    \begin{corollary}
        If $\mu$ and $\lambda$ are regular and there are simple $P_\lambda$-point and $P_\mu$-point ultrafilters over $\kappa>\omega$, then $\mu=\lambda$.
    \end{corollary}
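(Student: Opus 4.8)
The plan is to obtain this as an immediate consequence of the preceding theorem, once we observe that its numerical hypothesis is automatically satisfied.

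First I would invoke the observation recorded just before Corollary~\ref{cor:cub-b-d}: if a uniform ultrafilter on $\kappa$ is a simple $P_\nu$-point, then necessarily $\nu\geq\kappa^+$ (and, in particular, the ultrafilter is $\kappa$-complete). Applying this with $\nu=\lambda$ and with $\nu=\mu$, and recalling the standing hypothesis of this section that $\kappa$ is a regular uncountable cardinal together with the assumption $\kappa>\omega$, we conclude that $\kappa<\lambda$ and $\kappa<\mu$, with $\kappa,\lambda,\mu$ all regular uncountable. (The regularity of $\lambda$ and $\mu$ is part of the hypothesis; in any case only regular $\lambda$ need be considered, since a simple $P_\lambda$-point is the same as a simple $P_{\cf(\lambda)}$-point.)

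Now the preceding theorem applies to the given simple $P_\lambda$-point ultrafilter on $\kappa$, yielding $\lambda=\mathfrak{d}_\kappa$; it applies equally to the given simple $P_\mu$-point ultrafilter on $\kappa$, yielding $\mu=\mathfrak{d}_\kappa$. Since $\mathfrak{d}_\kappa$ is a fixed cardinal of the ambient model (independent of the choice of witnessing ultrafilter), we get $\lambda=\mathfrak{d}_\kappa=\mu$, as desired. One could equally well read off the conclusion from any of the other equalities furnished by the theorem, e.g. $\lambda=\mathfrak{b}_\kappa=\mu$ or $\lambda=\mathfrak{u}_\kappa=\mu$, all of which are model-determined quantities.

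I do not expect a genuine obstacle here: the argument is a one-line deduction from the previous theorem. The only point meriting explicit mention is the verification that the hypothesis $\kappa<\nu$ of that theorem holds for $\nu\in\{\lambda,\mu\}$, and this is precisely what the cited fact about simple $P_\nu$-points delivers.
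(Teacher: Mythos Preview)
Your proposal is correct and matches the paper's intended argument: the corollary is stated without proof immediately after the theorem, and the deduction you give---applying the theorem to each ultrafilter to pin both $\lambda$ and $\mu$ to the model-determined value $\mathfrak{d}_\kappa$ (or $\mathfrak{b}_\kappa$, etc.)---is exactly the one-line inference the paper has in mind. Your explicit check that $\kappa<\lambda,\mu$ via the earlier observation is the only detail worth spelling out, and you handle it correctly.
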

    This is not the case on $\omega$. Nyikos \cite{Nyikos2020-NYISUA} showed that
    the set of regular cardinals \(\lambda\) for which there is a simple \(P_\lambda\)-point ultrafilter on \(\omega\) has cardinality at most two; recently,
    Br\"{a}uninger--Mildenberger \cite{MildenBraun} proved a spectacular result that it is consistent with ZFC that there are simple $P_{\aleph_1}$-point and  $P_{\aleph_2}$-point ultrafilters on \(\omega\).
    \begin{corollary}
        For a regular uncountable cardinal $\kappa$, if there is a simple $P_\lambda$-point ultrafilter over $\kappa$, then $\cf(j_U(\kappa))=\lambda$ for every uniform ultrafilter on $\kappa$.
    \end{corollary}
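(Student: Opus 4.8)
The plan is to derive this as an immediate consequence of two facts already established in this section: the Claim that $\mathfrak{b}_\kappa\leq \cf^V(j_U(\kappa))\leq \mathfrak{d}_\kappa$ for \emph{every} uniform ultrafilter $U$ over $\kappa$, and the Theorem that the existence of a simple $P_\lambda$-point on $\kappa$ implies $\mathfrak{b}_\kappa=\mathfrak{d}_\kappa=\lambda$.

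First I would verify that the hypotheses of that Theorem are in force. Since we only consider simple $P_\lambda$-points for regular $\lambda$, and since (as noted right after the definition of simple $P_\lambda$-point) any uniform ultrafilter on $\kappa$ that is a simple $P_\lambda$-point must have $\lambda\geq\kappa^+$, the cardinals $\kappa<\lambda$ are both regular and uncountable. Hence the Theorem applies to the given simple $P_\lambda$-point ultrafilter and yields $\mathfrak{b}_\kappa=\mathfrak{d}_\kappa=\lambda$.

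Now let $U$ be an arbitrary uniform ultrafilter on $\kappa$. Applying the Claim and substituting the values just obtained,
\[\lambda=\mathfrak{b}_\kappa\leq \cf^V(j_U(\kappa))\leq \mathfrak{d}_\kappa=\lambda,\]
so $\cf^V(j_U(\kappa))=\lambda$, which is exactly the assertion.

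I do not expect any real obstacle here: the statement is a one-line corollary, with all the substantive content residing in the Claim (linking $\cf^V(j_U(\kappa))$ to the bounding and dominating numbers) and in the Theorem (collapsing those two invariants to $\lambda$). The only point worth a moment's care is that the ultrafilter $U$ in the conclusion is allowed to be any uniform ultrafilter, possibly distinct from the witnessing simple $P_\lambda$-point; but this is harmless, since the squeezing inequality of the Claim holds uniformly across all uniform ultrafilters on $\kappa$.
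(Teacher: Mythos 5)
Your proposal is correct and is exactly the intended argument: the paper states this corollary without proof precisely because it follows by sandwiching $\cf^V(j_U(\kappa))$ between $\mathfrak{b}_\kappa$ and $\mathfrak{d}_\kappa$ (the earlier Claim) and then invoking the preceding Theorem to collapse both to $\lambda$. Your side remarks — that $\lambda$ may be taken regular and exceeds $\kappa$, and that the squeezing inequality applies to an arbitrary uniform ultrafilter, not just the witnessing $P_\lambda$-point — match the paper's conventions and are the only points requiring care.
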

\subsection{$\pi$-characters, splitting, and reaping numbers}\label{SubSection: pi-characters}
Let us consider a well-known weakening of the characteristics from the previous section. 
We say that $\mathcal{B}$ is a $\pi$-base for a uniform ultrafilter $U$ on $\kappa$ if $\mathcal{B}\subseteq [\kappa]^\kappa$ and for every $A\in U$, there is $B\in \mathcal{B}$ such that $B\subseteq^* A$. 
$$\pi\mathfrak{ch}(U)=\min\{|\mathcal{B}|\mid \mathcal{B}\text{ is a }\pi\text{-base for }U\}$$
$$\pi\mathfrak{u}_\kappa=\min\{\pi\mathfrak{ch}(U)\mid U\text{ is a uniform ultrafilter over }\kappa\}$$
$$\pi\mathfrak{u}^{com}_\kappa=\min\{\pi\mathfrak{ch}(U)\mid U\text{ is a }\kappa\text{-complete ultrafilter over }\kappa\}$$
Clearly, the above characteristics are all less than or equal to their respective $\pi$-free versions.
The $\pi$-depth spectrum is the set $\Sp_{\pi \dpt}(U)$ of regular cardinals $\lambda$ for which there exists a \(\subseteq^*\)-decreasing sequence $\l X_i\mid i<\lambda\r\subseteq U$ that is unbounded in $([\kappa]^\kappa,\supseteq^*)$. From this we can define the \(\pi\)-analog of \(\mathfrak{t}\):
$$\pi\mathfrak{t}(U)=\min \Sp_{\pi \dpt}(U)$$
\begin{definition}\label{def:pi_plambda_point}
       $U$ is a $\pi P_\lambda$-point if every $\mathcal{A}\subseteq U$ of cardinality less than $\lambda$ has a pseudo-intersection. Namely there is $B\in [\kappa]^\kappa$ such that $B\subseteq^* A$ for all $A\in\mathcal{A}$.
\end{definition}
Once again, we note that we may restrict our attention to  $\pi P_\lambda$-points where $\lambda$ is regular and that such a lambda must be of cofinality at least $\kappa^+$.

$$\pi\mathfrak{p}(U)=\min\{\lambda\mid U\text{ is not a }\pi P_{\lambda^+}\text{-point}\}$$
\begin{remark}
    \begin{enumerate}
        \item $\Sp_{\pi \dpt}(U)\subseteq \Sp_{\dpt}(U)$.
\item   $\mathfrak{t}(U)\leq \pi\mathfrak{p}(U)\leq \pi\mathfrak{t}(U)\leq \pi\mathfrak{ch}(U)\leq \mathfrak{ch}(U)$. The inequalities $\pi\mathfrak{p}(U)\leq \pi\mathfrak{t}(U)$  and $\pi\mathfrak{ch}(U)\leq \mathfrak{ch}(U)$ are immediate from the definitions. To see $\pi\mathfrak{t}(U)\leq \pi\mathfrak{ch}(U)$ suppose towards a contradiction that $\pi\mathfrak{t}(U)=\lambda_1> \pi\mathfrak{ch}(U)=\lambda_0$, let $\l X_i\mid i<\lambda_1\r\subseteq U$ be $\subseteq^*$-decreasing witnessing $\lambda_1\in \Sp_{\pi \dpt}(U)$, and let $\l b_\alpha\mid \alpha<\lambda_0\r$ be a $\pi$-base for $U$. For each $X_i$, there is some ${\alpha_i} < \lambda_0$ such that $b_{\alpha_i}\subseteq^* X_i$. There are unboundedly many $i$'s such that $\alpha_i=\alpha^*$ and therefore $b_{\alpha^*}$ would be a lower bound for $\l X_i\mid i<\lambda_1\r$ in $([\kappa]^\kappa,\subseteq^*)$, contradiction.

For $\mathfrak{t}(U)\leq \pi\mathfrak{p}(U)$, recall that $\mathfrak{t}(U)=\mathfrak{p}(U)$ and if $U$ is not a $\pi P_{\lambda^+}$-point then $U$ is also not a $\pi P_{\lambda^+}$-point.
    \end{enumerate}
\end{remark}
\begin{question}
    Is $\pi\mathfrak{t}(U)=\pi\mathfrak{p}(U)$?
\end{question}
\begin{remark}\label{Club remark}
    One can define the above $\pi$-characteristics for filters. For the club filter however, we have that $\pi\mathfrak{ch}(\Cub_\kappa)=\mathfrak{ch}(\Cub_\kappa)$, $\pi\mathfrak{t}(\Cub_\kappa)=\mathfrak{t}(\Cub_\kappa)$, and $\pi\mathfrak{p}(\Cub_\kappa)=\mathfrak{p}(\Cub_\kappa)$.
\end{remark}
We say that $f:\kappa\to \kappa$ is almost one-to-one modulo an ultrafilter $U$ if there is $X\in U$ such that $f\restriction X$ is bounded-to-one, namely, for every $\gamma<\kappa$, $\pi^{-1}[\{\gamma\}]\cap X$ is bounded in $\kappa$. The following is a generalization of the well known Rudin-Blass ordering of ultrafilters on $\omega$:
 \begin{definition}
    Let $U,W$ be ultrafilters over $\kappa$. We say that an ultrafilter $U$ is Rudin-Blass below $W$, and denote it by $U\leq_{RB}W$ if there is an almost one-to-one mod $W$ function $f:\kappa\to\kappa$ such that $f_*(W)=U$.
\end{definition}
\begin{theorem}[Kanamori, Ketonen]\label{Thm: moving to ext of Club}
    Let $U$ be a  countably complete uniform ultrafilter over a regular cardinal $\kappa$. Then $U$ is $RB$-above an ultrafilter which extends the club filter.
\end{theorem}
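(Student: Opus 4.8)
The plan is to read the conclusion off the ultrapower of $U$. Write $j=j_U\colon V\to M_U$; countable completeness makes $M_U$ well-founded, so we take it transitive. The relevant ordinal is $\delta:=\sup j[\kappa]$. Since $U$ is uniform, $\{\alpha<\kappa:\alpha>\xi\}\in U$ for every $\xi<\kappa$, so $j(\xi)<[\id]_U$; hence $\delta\le[\id]_U<j(\kappa)$, and in particular $\delta$ is a genuine ordinal below $j(\kappa)$. Moreover $\delta\notin j[\kappa]$, since $\delta=j(\xi)$ would force $j(\xi+1)>\delta$ (as $\xi+1<\kappa$), contradicting $\delta=\sup j[\kappa]$. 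Let $W:=\{A\subseteq\kappa:\delta\in j(A)\}$, the ultrafilter derived from $j$ at $\delta$; equivalently $W=f_*(U)$ for any $f\colon\kappa\to\kappa$ with $[f]_U=\delta$, and such an $f$ can be taken with $f(\alpha)\le\alpha$.

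Next I would check that $W$ extends the club filter. Fix a club $C\subseteq\kappa$. By elementarity $M_U$ believes $j(C)$ is closed unbounded in $j(\kappa)$, so $j(C)$ is genuinely closed below $j(\kappa)$. As $C$ is unbounded in $\kappa$ and $j[C]\subseteq j(C)$, we have $\sup j[C]=\sup j[\kappa]=\delta$, while $\delta\notin j[C]$ by the observation above; hence $\delta$ is a limit point of $j(C)$ lying below $j(\kappa)$, so $\delta\in j(C)$ by closedness, i.e.\ $C\in W$. Thus $W\supseteq\Cub_\kappa$ (equivalently, every member of $W$ is stationary), and since any ultrafilter extending $\Cub_\kappa$ is uniform, this already yields the Rudin--Keisler form of the statement: $W\leq_{RK}U$ and $W\supseteq\Cub_\kappa$.

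To upgrade $\leq_{RK}$ to $\leq_{RB}$ one must realize the relevant ordinal by an almost one-to-one map, and for this I would use transitive-collapse maps of $U$-sets. For $A\in[\kappa]^\kappa$ put $\pi_A(\alpha)=\mathrm{otp}(A\cap\alpha)$: this is injective on $A$, and since each fibre $\pi_A^{-1}(\gamma)$ sits inside an initial segment of $\kappa$, $\pi_A$ is bounded-to-one outright; one computes $[\pi_A]_U=\mathrm{otp}\big(j(A)\cap[\id]_U\big)=:\rho(A)$, the rank of $[\id]_U$ inside $j(A)$, and $\delta\le\rho(A)\le[\id]_U$ for $A\in U$. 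Choose $A^*\in U$ with $\rho(A^*)$ least possible; then $\pi_{A^*}$ witnesses $W^*:=(\pi_{A^*})_*(U)\leq_{RB}U$, and it remains to see that the uniform ultrafilter $W^*$ extends $\Cub_\kappa$. If it did not, some $X\in W^*$ would be disjoint from a club $C$; then $A':=A^*\setminus\pi_{A^*}[C]\in U$ satisfies $\rho(A')=\rho(A^*)$ while the increasing enumeration of $j(A')$ omits the image of the club $j(C)$, and playing the closedness of $j(C)$ in $M_U$ against the minimality of $\rho$ should yield a $U$-set of strictly smaller rank, a contradiction. This last point — reconciling the minimality of the rank with the closure structure of $j(C)$ inside $M_U$ — is the genuinely delicate step, and is exactly where the well-foundedness granted by countable completeness is used; it is carried out in detail in Kanamori's treatment of Ketonen's theorem. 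Granting it, $W^*$ is a uniform ultrafilter extending the club filter with $W^*\leq_{RB}U$, which is the assertion.
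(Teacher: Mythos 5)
Your first two paragraphs are correct and essentially reproduce the classical Ketonen computation: $\delta=\sup j[\kappa]$ lies strictly between $j[\kappa]$ and $j(\kappa)$, is a limit point of $j(C)$ for every club $C$, and hence the derived ultrafilter at $\delta$ extends $\Cub_\kappa$. But this only gives the Rudin--Keisler version, and the content of the theorem is the Rudin--Blass upgrade, which you do not actually prove: you set up a minimization and then write that the contradiction ``should'' follow and ``is carried out in detail in Kanamori's treatment.'' That is a genuine gap, and worse, the specific setup you chose does not support the intended contradiction. Suppose $C\notin W^*$, and put $A'=A^*\setminus\pi_{A^*}^{-1}[C]\in U$ (your $A^*\setminus\pi_{A^*}[C]$ must be a slip, since removing the \emph{image} of $C$ is not the relevant operation). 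One computes $\rho(A')=\mathrm{otp}\bigl(\rho(A^*)\setminus j(C)\bigr)$. Since $j(C)$ is closed and $\rho(A^*)\notin j(C)$, the trace $j(C)\cap\rho(A^*)$ is \emph{bounded} below $\rho(A^*)$, say by $\beta$; so $\rho(A^*)\setminus j(C)$ contains the whole tail $[\beta+1,\rho(A^*))$, and its order type is $\eta+\gamma$ where $\eta=\mathrm{otp}((\beta+1)\setminus j(C))$ and $(\beta+1)+\gamma=\rho(A^*)$. If $\gamma$ is additively indecomposable and exceeds $\beta$, then $\eta+\gamma=(\beta+1)+\gamma=\rho(A^*)$, so $\rho(A')=\rho(A^*)$ and minimality is not violated. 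The problem is that you minimized over the restricted class of collapse maps $\pi_A$, and that class is not closed under the operation needed to exploit nonstationarity.

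The paper's proof fixes exactly this by minimizing over \emph{all} almost one-to-one functions: let $f$ be the $<_U$-least function that is almost one-to-one modulo $U$ (this is where well-foundedness, i.e.\ countable completeness, is used) and set $W=f_*(U)$, so $W\leq_{RB}U$ and $W$ is uniform. If some $X\in W$ were disjoint from a club $C$, then $g(\beta)=\sup(C\cap\beta)$ is regressive on $X$ and bounded-to-one (each fibre lies in an interval between consecutive points of $C$), so $g\circ f$ is again almost one-to-one modulo $U$ while $[g\circ f]_U<[f]_U$, contradicting minimality. The key point you are missing is that composing with such a regressive bounded-to-one $g$ stays inside the class being minimized and strictly drops the $<_U$-rank; your class $\{\pi_A: A\in U\}$ has no analogous closure property. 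If you want to salvage your approach, you would have to show that $\min_{A\in U}\rho(A)$ coincides with $[f]_U$ for the least almost one-to-one $f$, which is not obvious and is not addressed in your writeup.
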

\begin{proof}
    First, we claim that if \(W\) is an uniform ultrafilter on a regular uncountable cardinal $\kappa$ such that no function that is almost one-to-one modulo $W$ is regressive on a  set in $W$, then $W$ extends the club filter. To see this, note that any nonstationary set $A\subseteq \kappa$
    supports a monotone regressive function $g : A\to \kappa$.
    (Namely, let $C\subseteq \kappa \setminus A$ be club, and let $g(\alpha) = \sup(C\cap \alpha)$ for $\alpha\in A$.)
    Therefore $W$ cannot contain a nonstationary set, and hence $W$ extends the club filter.

    To prove the theorem, let $f : \kappa\to \kappa$ be the \(<_U\)-least function that is almost one-to-one modulo \(U\), and let \(W = f_*(U)\). Note that \(W\leq_{RB} U\) is a uniform ultrafilter on $\kappa$ such that no function that is almost one-to-one modulo $W$ is regressive on a  set in $W$, and hence $W$ extends the club filter.
\end{proof}
\begin{remark}\label{remark: adapts to models}
    The assumption of countable completeness in the previous theorem can be improved to the assumption that there is a least almost one-to-one function modulo \(U\). Also, the argument adapts to countably complete $M$-ultrafilters where $M$ is a transitive model of $\text{ZFC}^{-}$.
\end{remark}
\begin{theorem}\label{Thm: Rudin-Blass inequalities}
    If $U\leq_{RB}W$ then $\pi \mathfrak{t}(W)\leq \pi\mathfrak{t}(U)$ and $\pi\mathfrak{ch}(U)\leq \pi\mathfrak{ch}(W)$. 
\end{theorem}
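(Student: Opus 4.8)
\textit{Proof proposal.} The plan is to unpack the hypothesis $U\leq_{RB}W$ into a map $f:\kappa\to\kappa$ that is almost one-to-one modulo $W$ with $f_*(W)=U$, fix a set $X\in W$ on which $f$ is bounded-to-one, and then transport $\pi$-bases and unbounded $\subseteq^*$-decreasing sequences across $f$ in the two directions, always intersecting with $X$. Everything rests on one elementary observation: since $f\restriction X$ is bounded-to-one and $\kappa$ is regular, for $S\subseteq\kappa$ of size $<\kappa$ the set $f^{-1}[S]\cap X=\bigcup_{\gamma\in S}\bigl(f^{-1}[\{\gamma\}]\cap X\bigr)$ is a union of fewer than $\kappa$ bounded sets, hence bounded, and dually $f[T]$ is bounded whenever $|T|<\kappa$ (as a bounded subset of $\kappa$). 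From this I extract the facts actually used: (i) if $B\subseteq X$ is unbounded then $f[B]\in[\kappa]^\kappa$, since otherwise $B\subseteq f^{-1}[f[B]]\cap X$ would be bounded; (ii) if $B\subseteq X$ and $B\subseteq^* f^{-1}[A]$ then $f[B]\subseteq^* A$ (the part of $f[B]$ coming from the bounded ``error'' of $B$ is bounded); (iii) if $A\subseteq^* A'$ then $f^{-1}[A]\cap X\subseteq^* f^{-1}[A']\cap X$ (same reason); and (iv) $f^{-1}[A]\cap X\in W$, and it is unbounded, for every $A\in U$.

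For $\pi\mathfrak{ch}(U)\leq\pi\mathfrak{ch}(W)$: take a $\pi$-base $\mathcal B$ for $W$ of size $\pi\mathfrak{ch}(W)$ and set $\mathcal B'=\{\,f[B\cap X]\ :\ B\in\mathcal B,\ |B\cap X|=\kappa\,\}$. By (i), every element of $\mathcal B'$ lies in $[\kappa]^\kappa$, and $|\mathcal B'|\leq|\mathcal B|$. Given $A\in U$, the set $f^{-1}[A]\cap X$ is in $W$ by (iv), so some $B\in\mathcal B$ satisfies $B\subseteq^* f^{-1}[A]\cap X\subseteq X$; then $B=^* B\cap X$, so $|B\cap X|=\kappa$, and $B\subseteq^* f^{-1}[A]$ together with (ii) gives $f[B\cap X]=f[B]\subseteq^* A$. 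Hence $\mathcal B'$ is a $\pi$-base for $U$ and the inequality follows.

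For $\pi\mathfrak{t}(W)\leq\pi\mathfrak{t}(U)$: let $\lambda=\pi\mathfrak{t}(U)$ and fix a $\subseteq^*$-decreasing $\langle X_i:i<\lambda\rangle\subseteq U$ unbounded in $([\kappa]^\kappa,\supseteq^*)$. Put $Y_i=f^{-1}[X_i]\cap X$. Each $Y_i\in W$ by (iv), and $\langle Y_i:i<\lambda\rangle$ is $\subseteq^*$-decreasing by (iii). The one step needing a little care is unboundedness in $([\kappa]^\kappa,\supseteq^*)$: if $C\in[\kappa]^\kappa$ had $C\subseteq^* Y_i$ for all $i$, then $C\subseteq^* Y_0\subseteq X$, so $C\cap X\in[\kappa]^\kappa$ and $f[C\cap X]\in[\kappa]^\kappa$ by (i); and from $C\cap X\subseteq^* f^{-1}[X_i]$ and (ii) we would get $f[C\cap X]\subseteq^* X_i$ for every $i<\lambda$, contradicting the unboundedness of $\langle X_i\rangle$. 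Thus $\lambda\in\Sp_{\pi\dpt}(W)$, whence $\pi\mathfrak{t}(W)\leq\lambda=\pi\mathfrak{t}(U)$.

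I do not expect a genuine obstacle: this is a routine transfer along $f$ and $f^{-1}(\cdot)\cap X$. The only delicate bookkeeping is keeping $\subseteq$ and $\subseteq^*$ straight each time a bounded ``error'' set is pushed through an image or preimage, and that is exactly what the opening observation — and the regularity of $\kappa$ — is there to absorb. (One should also note, as in Remark~\ref{remark: adapts to models}, that nothing here used more than the existence of such an $f$, so the argument will adapt to the $M$-ultrafilter setting if needed later.)
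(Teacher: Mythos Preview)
Your proof is correct and follows essentially the same approach as the paper: push a $\pi$-base for $W$ forward along $f$ (after intersecting with the set $X$ on which $f$ is bounded-to-one), and pull a witnessing $\subseteq^*$-decreasing sequence in $U$ back via $f^{-1}[\cdot]\cap X$. If anything, you are more careful than the paper in consistently intersecting with $X$, which is needed to make the $\subseteq^*$-decreasing and unboundedness claims go through; the only cosmetic slip is writing $f[B\cap X]=f[B]$ where $=^*$ is meant, but this does not affect the argument.
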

\begin{proof}
    Let $g:\kappa\to\kappa$ be such that $g_*(W)=U$ and let $X\in W$ be such that $g\restriction X$ is almost one-to-one. Let $\l X_i\mid i<\lambda\r$ be a $\pi$-base for $W$. By shrinking the sequence to another $\pi$-base, we may assume that for every $i<\lambda$, $X_i\subseteq^*X$. This means that $g[X_i]$ must be unbounded in $\kappa$. It is clear now that $\l g[X_i]\mid i<\lambda\r$ is a $\pi$-base for $U$. 
    For the other inequality, let $\l Y_i\mid i<\lambda\r\subseteq U$ be $\subseteq^*$-decreasing with no pseudo-intersection. Then $\l g^{-1}[Y_i]\mid i<\lambda\r$ must also be $\subseteq^*$-decreasing. If $Y$ would have been a pseudo-intersection, then $g[Y]$ would have been a pseudo-intersection of the $Y_i$'s. Note that if we start with a sequence $\l Z_i\mid i<\lambda\r\subseteq W$ with no pseudo-intersection, then $g[Z_i]$ is indeed $\subseteq^*$-decreasing, but this sequence might have a pseudo-intersection.  
\end{proof}
\begin{theorem}\label{Thm:b=d=lambda}
    For any countably complete uniform ultrafilter $U$ on $\kappa$, $\pi\mathfrak{ch}(U)\geq \mathfrak{d}_\kappa$ and $\pi\mathfrak{t}(U)\leq \mathfrak{b}_\kappa$.
\end{theorem}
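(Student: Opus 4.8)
The plan is to reduce the whole statement to the club filter via the Kanamori--Ketonen reduction (Theorem~\ref{Thm: moving to ext of Club}) and then transport the resulting bounds back up to \(U\) along the Rudin--Blass order (Theorem~\ref{Thm: Rudin-Blass inequalities}). Concretely, one shows that both inequalities hold for an ultrafilter extending \(\Cub_\kappa\), and that this property is inherited by anything Rudin--Blass above it.

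First I would record two routine monotonicity facts about the \(\pi\)-invariants for filters. If \(F\subseteq G\) are filters on \(\kappa\), then any \(\pi\)-base for \(G\) is automatically a \(\pi\)-base for \(F\), so \(\pi\mathfrak{ch}(F)\le\pi\mathfrak{ch}(G)\); and any \(\subseteq^*\)-decreasing sequence contained in \(F\) is also contained in \(G\), so \(\Sp_{\pi\dpt}(F)\subseteq\Sp_{\pi\dpt}(G)\) and hence \(\pi\mathfrak{t}(G)\le\pi\mathfrak{t}(F)\). Combining these with Remark~\ref{Club remark} and the identities \(\mathfrak{ch}(\Cub_\kappa)=\mathfrak{d}_\kappa\), \(\mathfrak{t}(\Cub_\kappa)=\mathfrak{b}_\kappa\), one gets: for \emph{any} ultrafilter \(W\) on \(\kappa\) extending \(\Cub_\kappa\), \(\pi\mathfrak{ch}(W)\ge\pi\mathfrak{ch}(\Cub_\kappa)=\mathfrak{d}_\kappa\) and \(\pi\mathfrak{t}(W)\le\pi\mathfrak{t}(\Cub_\kappa)=\mathfrak{b}_\kappa\). (In particular \(\mathfrak{b}_\kappa\in\Sp_{\pi\dpt}(W)\), so \(\pi\mathfrak{t}(W)\) is well defined; if one prefers not to invoke Remark~\ref{Club remark}, this can be checked by hand, since a \(\subseteq^*\)-decreasing tower of clubs of length \(\mathfrak{b}_\kappa=\mathfrak{t}(\Cub_\kappa)\) with no club lower bound has no pseudo-intersection in \([\kappa]^\kappa\) at all: the closure of any such pseudo-intersection would be a club lower bound.)

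Next I would apply Theorem~\ref{Thm: moving to ext of Club}: as \(U\) is countably complete, fix a uniform ultrafilter \(W\) on \(\kappa\) with \(\Cub_\kappa\subseteq W\) and \(W\le_{RB}U\). Now apply Theorem~\ref{Thm: Rudin-Blass inequalities}, with \(W\) playing the role of the RB-smaller ultrafilter and \(U\) the role of the RB-larger one, to obtain \(\pi\mathfrak{ch}(U)\ge\pi\mathfrak{ch}(W)\) and \(\pi\mathfrak{t}(U)\le\pi\mathfrak{t}(W)\). Chaining with the previous paragraph gives \(\pi\mathfrak{ch}(U)\ge\pi\mathfrak{ch}(W)\ge\mathfrak{d}_\kappa\) and \(\pi\mathfrak{t}(U)\le\pi\mathfrak{t}(W)\le\mathfrak{b}_\kappa\), which is the claim; the same chain also yields \(\mathfrak{b}_\kappa\in\Sp_{\pi\dpt}(U)\neq\emptyset\), so \(\pi\mathfrak{t}(U)\) is well defined.

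Since every ingredient is already in hand, there is no serious obstacle here — the proof is essentially bookkeeping. The one point genuinely demanding care is the orientation of the inequalities: one must invoke Theorem~\ref{Thm: Rudin-Blass inequalities} with \(U\) on the \emph{larger} side of \(\le_{RB}\), which is precisely the side on which \(\pi\mathfrak{ch}\) can only grow and \(\pi\mathfrak{t}\) can only shrink, so that the two bounds end up pointing in the directions asserted.
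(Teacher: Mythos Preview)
Your proposal is correct and follows essentially the same strategy as the paper: reduce via Theorem~\ref{Thm: moving to ext of Club} to an ultrafilter \(W\) extending \(\Cub_\kappa\), transfer back along \(\leq_{RB}\) via Theorem~\ref{Thm: Rudin-Blass inequalities}, and verify the inequalities for \(W\). The only minor difference is in this last step: the paper phrases it as ``a straightforward generalization of Proposition~\ref{Prop: d less ch, dp less b}'' (i.e., rerun the closure-of-a-base argument directly for the \(\pi\)-invariants), whereas you instead invoke the monotonicity of \(\pi\mathfrak{ch}\) and \(\pi\mathfrak{t}\) under filter inclusion together with Remark~\ref{Club remark} --- a slightly cleaner packaging of the same content.
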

\begin{proof}
    By Theorem \ref{Thm: moving to ext of Club}, we can find $U^*\leq_{RB} U$ such that $U^*$ extends the club filter.  By Theorem \ref{Thm: Rudin-Blass inequalities} it sufficed to prove the inequalities for $U^*$. The argument for $U^*$ is a straightforward generalization of Proposition \ref{Prop: d less ch, dp less b}.
\end{proof}
The countably completeness assumption will be removed using Lemma \ref{Lemma: pi-base is unbounded} and Theorem \ref{Thm: Raghavan-Shelah}.

Let us introduce the splitting and reaping numbers.
    We say that $A$ splits $B$ if $A\cap B$ and $B\setminus A$ are unbounded in $\kappa$.
    We say that $\mathcal{A}$ is a splitting family if every $X\in [\kappa]^\kappa$ is splittable by some $A\in\mathcal{A}$. We say that $\mathcal{A}\subseteq [\kappa]^\kappa$ is unsplittable, if there is no $A\in [\kappa]^\kappa$ that splits every $A\in \mathcal{A}$.\begin{enumerate}
        \item $\mathfrak{s}_\kappa=\min\{|\mathcal{A}|\mid \mathcal{A}\text{ is a splitting family}\}$.
        \item $\mathfrak{r}_\kappa=\min\{|\mathcal{A}|\mid \mathcal{A}\text{ is a 
 unsplittable family}\}$.
    \end{enumerate} 
 Evidently,  $\mathcal{A}$ is  unsplittable if for example, $\mathcal{A}$ is a $\pi$-base of a uniform ultrafilter. Hence $\mathfrak{r}_\kappa\leq \pi\mathfrak{ch}(U)$. In fact B. Balcar and P. Simon proved that $\mathfrak{r}_\kappa$ is always realized by a $\pi$-base of a uniform ultrafilter \cite{BALCAR1991133}.
 \begin{lemma}\label{Lemma: pi-base is unbounded} Let $U$ be a uniform ultrafilter over $\kappa$.
    \begin{enumerate}
        \item $\pi\mathfrak{ch}(U)\geq\mathfrak{r}_\kappa$.
        \item $\pi\mathfrak{p}(U)\leq \mathfrak{s}_\kappa$
    \end{enumerate}
\end{lemma}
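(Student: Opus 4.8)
The plan is to prove both parts by passing from an arbitrary uniform ultrafilter $U$ to a suitable restriction and exploiting the characterization of $\mathfrak{r}_\kappa$ and $\mathfrak{s}_\kappa$ in terms of splitting.

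For part (1), I would argue by contraposition on the witnessing family. Recall that a $\pi$-base $\mathcal{B}$ for $U$ is an unsplittable family: any $A\in[\kappa]^\kappa$ belongs to $U$ or has complement in $U$, and in either case some $B\in\mathcal{B}$ satisfies $B\subseteq^* A$ or $B\subseteq^* (\kappa\setminus A)$, so $A$ fails to split $B$. Hence every $\pi$-base has size at least $\mathfrak{r}_\kappa$, giving $\pi\mathfrak{ch}(U)\geq\mathfrak{r}_\kappa$. This is the easy direction and essentially just unpacks the remark preceding the lemma; the only point to be careful about is that "$B$ is not split by $A$" means $B\cap A$ or $B\setminus A$ is bounded, which is exactly what $B\subseteq^* A$ or $B\subseteq^*(\kappa\setminus A)$ gives.

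For part (2), I would take $\mathcal{S}$ a splitting family of size $\mathfrak{s}_\kappa$ and build from it a $<\mathfrak{s}_\kappa^+$ subfamily of $U$ with no pseudo-intersection, showing $\pi\mathfrak{p}(U)\leq\mathfrak{s}_\kappa$. The idea: for each $S\in\mathcal{S}$, exactly one of $S$, $\kappa\setminus S$ lies in $U$; call it $S^U$. I claim $\{S^U\mid S\in\mathcal{S}\}\subseteq U$ has no pseudo-intersection in $[\kappa]^\kappa$. Indeed, if $B\in[\kappa]^\kappa$ satisfied $B\subseteq^* S^U$ for every $S\in\mathcal{S}$, then for every $S\in\mathcal{S}$ one of $B\cap S$, $B\setminus S$ is bounded, so no member of $\mathcal{S}$ splits $B$, contradicting that $\mathcal{S}$ is a splitting family. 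Thus $U$ is not a $\pi P_{\mathfrak{s}_\kappa^+}$-point, i.e. $\pi\mathfrak{p}(U)\leq\mathfrak{s}_\kappa$.

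The main obstacle — really the only subtlety — is bookkeeping about the regularity/cofinality conditions: $\pi\mathfrak{p}(U)$ is defined via $\pi P_{\lambda^+}$-points and one has tacitly restricted to regular $\lambda$ of cofinality $\geq\kappa^+$, so I should note that $\mathfrak{s}_\kappa\geq\kappa^+$ (a splitting family must be large since $\kappa$-completeness-style diagonalization defeats small families) and that it suffices to produce \emph{some} subfamily of $U$ of size $\leq\mathfrak{s}_\kappa$ with no pseudo-intersection, which then bounds $\pi\mathfrak{p}(U)$ by $\mathfrak{s}_\kappa$ after passing to a regular cardinal $\leq\mathfrak{s}_\kappa$ if necessary. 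Everything else is a direct translation between "splits" and "$\subseteq^*$".
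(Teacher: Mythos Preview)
Your proof is correct and follows exactly the same approach as the paper: part (1) is the observation that a $\pi$-base is unsplittable, and part (2) replaces each member of a minimal splitting family by whichever of $S,\kappa\setminus S$ lies in $U$ and argues that a pseudo-intersection would be unsplit. Your final paragraph's worry about regularity is unnecessary (the definition of $\pi\mathfrak{p}(U)$ already gives $\pi\mathfrak{p}(U)\leq\mathfrak{s}_\kappa$ directly once you exhibit a subfamily of $U$ of size $\leq\mathfrak{s}_\kappa$ with no pseudo-intersection), and the aside that $\mathfrak{s}_\kappa\geq\kappa^+$ is not in general true for regular uncountable $\kappa$---but neither point affects the argument.
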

\begin{proof}
$(1)$ is trivial as we observed above. For $(2)$, let $\l S_j\mid j<\mathfrak{s}_\kappa\r$ be a splitting family. For every $j$, either $S_j$ or $\kappa\setminus S_j$ is in $U$. If  $\mathfrak{s}_\kappa<\pi\mathfrak{p}(U)$, these sets would have had a pseudo-intersection which couldn't be split by any of the $S_j$'s. This is a contradiction.
\end{proof}
   
\begin{theorem}[Raghavan-Shelah {\cite{RaghavanShelah}}]\label{Thm: Raghavan-Shelah} 
    Let $\kappa$ be an inaccessible cardinal, then: \begin{enumerate}
        \item $\mathfrak{d}_\kappa\leq \mathfrak{r}_\kappa$
    \item $\mathfrak{s}_\kappa\leq \mathfrak{b}_\kappa$.
    \end{enumerate}
\end{theorem}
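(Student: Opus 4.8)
This is the theorem of Raghavan and Shelah cited as \cite{RaghavanShelah}; the plan is to argue along the following lines. First observe that (1) and (2) are two sides of one combinatorial fact: ``$B$ splits $A$'' is a relation on $[\kappa]^\kappa$ for which dominating families are exactly splitting families and unbounded families are exactly the unsplittable ones, so its dominating and bounding numbers are $\mathfrak{s}_\kappa$ and $\mathfrak{r}_\kappa$, while $\mathfrak{b}_\kappa$ and $\mathfrak{d}_\kappa$ are the bounding and dominating numbers of $({}^\kappa\kappa,\leq^*)$. Consequently a single Galois--Tukey morphism — maps $A\mapsto\phi_A\in{}^\kappa\kappa$ and $g\mapsto X_g\in[\kappa]^\kappa$ such that whenever $g(\xi)>\phi_A(\xi)$ for cofinally many $\xi<\kappa$ the set $X_g$ splits $A$ — yields (1) and (2) at once, so it suffices to construct such a pair, equivalently to prove (2) by a construction uniform enough to dualize. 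Since $\mathfrak{s}\leq\mathfrak{b}$ is not a theorem of ZFC on $\omega$, no such morphism exists there, so the inaccessibility of $\kappa$ must enter essentially.

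For (2), fix a $\leq^*$-increasing and $\leq^*$-unbounded sequence $\langle f_\alpha:\alpha<\mathfrak{b}_\kappa\rangle$ of strictly increasing continuous functions on $\kappa$ (these exist since $\mathfrak{b}_\kappa$ is regular and $\mathfrak{b}_\kappa>\kappa$), and attach to each $f_\alpha$ the interval partition $\mathcal{I}_\alpha=\langle I^\alpha_\xi:\xi<\kappa\rangle$ obtained by iterating $f_\alpha$ from $0$. Unboundedness of $\langle f_\alpha\rangle$ is equivalent to unboundedness of $\{\mathcal{I}_\alpha\}$ under eventual coarsening: there is no single interval partition $\mathcal{J}$ for which, for every $\alpha$, all but boundedly many blocks of $\mathcal{J}$ contain a whole block of $\mathcal{I}_\alpha$ (if there were, the ``two-steps-ahead endpoint'' function of $\mathcal{J}$ would $\leq^*$-dominate every $f_\alpha$). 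From each $f_\alpha$ I would read off not a single set but a $\kappa$-indexed family of subsets of $\kappa$ — unions of selected blocks of $\mathcal{I}_\alpha$ along the nodes of a height-$\kappa$ tree of addresses — and let $\mathcal{A}$ collect all of these over all $\alpha$. Since $\kappa$ is inaccessible, $\kappa^{<\kappa}=\kappa$, so only $\kappa$-many addresses are needed and $|\mathcal{A}|\leq\mathfrak{b}_\kappa\cdot\kappa=\mathfrak{b}_\kappa$.

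It remains — and this is the heart of the matter — to show $\mathcal{A}$ is splitting. Given $X\in[\kappa]^\kappa$, the idea is to build by a recursion of length $\kappa$, with a fusion at limit stages, an interval partition $\mathcal{J}_X$ adapted to how $X$ meets initial segments of $\kappa$ (recording the blocks where $X$ is spread out and those it essentially misses), with the feature that if no member of $\mathcal{A}$ arising from a given $f_\alpha$ splits $X$, then all but boundedly many blocks of $\mathcal{J}_X$ contain a block of $\mathcal{I}_\alpha$. If $\mathcal{A}$ were not splitting, some $X$ would be split by no member of $\mathcal{A}$ whatsoever, so this would apply to every $\alpha$ and $\mathcal{J}_X$ would coarsen every $\mathcal{I}_\alpha$, contradicting unboundedness; hence some member of $\mathcal{A}$ splits $X$. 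I expect the fusion to be the main obstacle: the bookkeeping must ensure that a $\mathcal{J}_X$-block is large relative to each relevant $\mathcal{I}_\alpha$ precisely because the corresponding candidate sets all failed to split $X$, and it is exactly here that the regularity and strong limitness of $\kappa$ are used, since at each stage there are only $<\kappa$ possibilities to diagonalize against. Finally, (1) follows by dualizing the resulting morphism, with ``dominating family'' and ``reaping family'' in the roles of ``unbounded family'' and ``splitting family.''
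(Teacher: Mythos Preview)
The paper does not prove this theorem. It is stated with attribution to Raghavan--Shelah and cited as \cite{RaghavanShelah}, with no accompanying proof; the paper simply uses it as a black box (in Lemma~\ref{Lemma: b less than u less that u com} and in the corollary following Definition~\ref{def: pi Point}). So there is no proof in the paper to compare your proposal against.

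As for the proposal itself: the overall architecture --- a Galois--Tukey morphism between $({}^\kappa\kappa,\not\geq^*)$ and the splitting relation, built from interval partitions attached to an unbounded family of functions, with inaccessibility used via $\kappa^{<\kappa}=\kappa$ to keep the size of the auxiliary family down --- is indeed the shape of the Raghavan--Shelah argument. But your sketch is honest about where the content lies and does not actually supply it: the construction of $\mathcal{J}_X$ and the verification that failure-to-split by every address over $f_\alpha$ forces $\mathcal{J}_X$ to eventually coarsen $\mathcal{I}_\alpha$ is precisely the nontrivial combinatorics, and ``a recursion with fusion at limit stages'' is a placeholder, not an argument. In particular, you have not said what the tree of addresses is or why $\kappa$-many addresses per $\alpha$ suffice to guarantee the dichotomy you need. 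If you intend to include a proof rather than a citation, that step must be written out; otherwise, citing \cite{RaghavanShelah} as the paper does is appropriate.
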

The following is a generalization of a simple $P_\lambda$-point.
\begin{definition}\label{def: pi Point}
   We say that an ultrafilter $U$ is a simple $\pi P_\lambda$-point if $\pi\mathfrak{p}(U)=\lambda=\pi\mathfrak{ch}(U)$
\end{definition}
Since $\mathfrak{t}(U)\leq\pi\mathfrak{p}(U)\leq \pi\mathfrak{ch}(U)\leq \mathfrak{ch}(U)$, a simple $P_\lambda$-point is a simple $\pi P_\lambda$-point.
\begin{corollary}
    If there is a uniform simple $\pi P_\lambda$-point on \(\kappa\) then $\lambda=\pi\mathfrak{u}_\kappa=\mathfrak{d}_\kappa=\mathfrak{b}_\kappa=\mathfrak{s}_\kappa=\mathfrak{r}_\kappa$.
\end{corollary}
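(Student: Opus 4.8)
The plan is to chain together the inequalities already established in this subsection, using the observation that a simple $\pi P_\lambda$-point $U$ has $\pi\mathfrak{p}(U) = \pi\mathfrak{ch}(U) = \lambda$ by definition, together with the remark that $\pi\mathfrak{p}(U) \le \pi\mathfrak{t}(U) \le \pi\mathfrak{ch}(U)$, which pins $\pi\mathfrak{t}(U) = \lambda$ as well. First I would extract the four basic bounds attached to such a $U$: from Theorem~\ref{Thm:b=d=lambda} we get $\mathfrak{d}_\kappa \le \pi\mathfrak{ch}(U) = \lambda$ and $\lambda = \pi\mathfrak{t}(U) \le \mathfrak{b}_\kappa$; from Lemma~\ref{Lemma: pi-base is unbounded} we get $\mathfrak{r}_\kappa \le \pi\mathfrak{ch}(U) = \lambda$ and $\lambda = \pi\mathfrak{p}(U) \le \mathfrak{s}_\kappa$. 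So far this gives $\mathfrak{d}_\kappa \le \lambda$, $\mathfrak{r}_\kappa \le \lambda$, $\lambda \le \mathfrak{b}_\kappa$, and $\lambda \le \mathfrak{s}_\kappa$.

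Next I would close the loop using the Raghavan--Shelah inequalities (Theorem~\ref{Thm: Raghavan-Shelah}), which apply since $\kappa$ is inaccessible (a uniform ultrafilter that is a $\pi P_\lambda$-point with $\lambda \geq \kappa^+$ forces $\kappa$-completeness, hence measurability, hence inaccessibility; this is the same observation made for simple $P_\lambda$-points earlier). Part~(1) gives $\mathfrak{d}_\kappa \le \mathfrak{r}_\kappa$ and part~(2) gives $\mathfrak{s}_\kappa \le \mathfrak{b}_\kappa$. Combining: $\mathfrak{b}_\kappa \ge \lambda$ and $\mathfrak{b}_\kappa \ge \mathfrak{s}_\kappa \ge \lambda$, while also $\mathfrak{b}_\kappa$ needs an upper bound — here use the standard fact $\mathfrak{b}_\kappa \le \mathfrak{d}_\kappa$ together with $\mathfrak{d}_\kappa \le \mathfrak{r}_\kappa \le \lambda$ to get $\mathfrak{b}_\kappa \le \lambda$. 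Thus $\mathfrak{b}_\kappa = \lambda$, and then $\mathfrak{s}_\kappa \le \mathfrak{b}_\kappa = \lambda \le \mathfrak{s}_\kappa$ forces $\mathfrak{s}_\kappa = \lambda$; similarly $\mathfrak{d}_\kappa \le \mathfrak{r}_\kappa \le \lambda \le \mathfrak{b}_\kappa \le \mathfrak{d}_\kappa$ forces $\mathfrak{d}_\kappa = \mathfrak{r}_\kappa = \lambda$.

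Finally, for $\pi\mathfrak{u}_\kappa = \lambda$: by definition $\pi\mathfrak{u}_\kappa \le \pi\mathfrak{ch}(U) = \lambda$, and for the reverse inequality, for \emph{any} uniform ultrafilter $W$ we have $\pi\mathfrak{ch}(W) \ge \mathfrak{r}_\kappa = \lambda$ by Lemma~\ref{Lemma: pi-base is unbounded}(1), so taking the minimum over all $W$ gives $\pi\mathfrak{u}_\kappa \ge \lambda$. This completes the chain of equalities $\lambda = \pi\mathfrak{u}_\kappa = \mathfrak{d}_\kappa = \mathfrak{b}_\kappa = \mathfrak{s}_\kappa = \mathfrak{r}_\kappa$.

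I do not expect a genuine obstacle here, since every ingredient is already in place; the only point requiring a moment's care is justifying that $\kappa$ is inaccessible so that Theorem~\ref{Thm: Raghavan-Shelah} can be invoked, and confirming that the elementary bound $\mathfrak{b}_\kappa \le \mathfrak{d}_\kappa$ (which, unlike the others, is not boxed as a numbered result above but is folklore) is available — it follows at once since any $\le^*$-dominating family is in particular unbounded.
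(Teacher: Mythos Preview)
Your proposal is correct and follows essentially the same approach as the paper, which simply cites Theorem~\ref{Thm:b=d=lambda}, Lemma~\ref{Lemma: pi-base is unbounded}, and Theorem~\ref{Thm: Raghavan-Shelah} without spelling out the chain of inequalities. Your explicit justification that a uniform $\pi P_{\kappa^+}$-point must be $\kappa$-complete (hence $\kappa$ is inaccessible, so Raghavan--Shelah applies and Theorem~\ref{Thm:b=d=lambda} can be invoked directly) is a useful detail the paper leaves implicit.
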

\begin{proof}
    This follows from  Theorem \ref{Thm:b=d=lambda}, Lemma \ref{Lemma: pi-base is unbounded}, Theorem \ref{Thm: Raghavan-Shelah}.
\end{proof}
\begin{question}
    What about $\mathfrak{a}_\kappa,\mathfrak{i}_\kappa,\mathfrak{p}_\kappa,\mathfrak{t}_\kappa$? Are they determined in the presence of a simple $P_\lambda$-point?
\end{question}
\subsection{Another look at the extender-based model}\label{Subsection: ExtenderModel}
In \cite{OmerMoti}, Ben-Neria and Gitik used the Merimovich extender-based Magidor-Radin forcing from \cite{CarmiRadin} in order to prove that it is consistent that the splitting number at a regular uncountable cardinal $\kappa$ is a regular cardinal $\lambda >\kappa^+$ from the existence of a measurable $\kappa$ with $o(\kappa)=\lambda$.

The following summarizes the relevant properties of a generic extension $M=V[G]$ via the extender based Magidor-Radin forcing: $\kappa<\lambda$ are regular uncountable cardinals of $M$ and there are intermediate models $\l M_i\mid i<\lambda\r$ of ZFC and sequences $\l\mathcal{U}_i\mid i<\lambda\r$ and $\l k_i\mid i<\lambda\r$ in \(M\) such that: 
\begin{enumerate}
        \item If $i<j$ then $M_i\subseteq M_j$.
        \item $\mathcal{U}_i\in M_i$ and $M_i\models \mathcal{U}_i$ is a normal ultrafilter .
        \item $k_i\in[\kappa]^\kappa$ diagonalizes $\mathcal{U}_i$ (i.e. $k_i\subseteq^* X$ for every $X\in \mathcal{U}_i$). Also $k_j\in M_{i}$ for all $j<i$. 
        \item $P(\kappa)^M=\bigcup_{i<\lambda}P(\kappa)^{M_i}$.
    \end{enumerate}
 In \cite{tomCohesive}, these properties were used to prove that in $V[G]$, the club filter is a simple $P_\lambda$-point. Combining this with \ref{cor:cub-b-d}:
 \begin{corollary}
     If $(1)$ through $(4)$ hold, then $M\models\mathfrak{b}_
     \kappa=\mathfrak{d}_\kappa=\lambda$.
 \end{corollary}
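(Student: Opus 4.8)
The plan is to reduce everything to the club filter. I will show that in $M$ the filter $\Cub_\kappa$ is a simple $P_\lambda$-point, and then apply Corollary~\ref{cor:cub-b-d}, which — working inside $M$, where $\lambda$ is regular and $\Cub_\kappa$ is a uniform filter on $\kappa$ — yields $\mathfrak{b}_\kappa=\mathfrak{d}_\kappa=\lambda$ at once. The excerpt already records that \cite{tomCohesive} derives ``$\Cub_\kappa$ is a simple $P_\lambda$-point'' from exactly properties (1)--(4), so one may simply cite that; for completeness I sketch the argument.

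For $i<\lambda$ let $\overline{k_i}$ denote the closure of $k_i$ in $\kappa$. Since $k_i\in[\kappa]^\kappa$ this is closed and unbounded, and since ``being the closure of'' is absolute, $\overline{k_i}$ is a club in every $M_j$ (and in $M$) that contains $k_i$. I claim $\langle\overline{k_i}\mid i<\lambda\rangle$ is a $\subseteq^*$-decreasing base for $\Cub_\kappa$ in $M$, which is precisely what it means for $\Cub_\kappa$ to be a simple $P_\lambda$-point. Each $\overline{k_i}$ is a club, so the family lies inside $\Cub_\kappa$. For the base property, given $A\in\Cub_\kappa$ fix a club $C\subseteq A$ in $M$; by (4) there is $i<\lambda$ with $C\in M_i$. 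As $\kappa$ is regular in $M$ it is regular in $M_i\subseteq M$, and ``closed unbounded'' is absolute between $M_i$ and $M$ for subsets of $\kappa$ lying in $M_i$, so $C$ is a club of $M_i$; hence $C\in\mathcal{U}_i$, since $\mathcal{U}_i$ is normal in $M_i$ and thus extends its club filter. By (3), $k_i\subseteq^* C$, and because $C$ is closed every sufficiently large limit point of $k_i$ also lies in $C$, so $\overline{k_i}\subseteq^* C\subseteq A$. For the decreasing property, let $i<j$; by (3), $k_i\in M_j$, so $\overline{k_i}$ is a club of $M_j$, hence $\overline{k_i}\in\mathcal{U}_j$, hence $k_j\subseteq^*\overline{k_i}$ by (3), hence $\overline{k_j}\subseteq^*\overline{k_i}$ by the same closure observation. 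This establishes the claim, and Corollary~\ref{cor:cub-b-d} completes the proof.

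The only genuine content is the interplay between the diagonalizing sequence $\langle k_i\rangle$ and the tower $\langle M_i\rangle$: one needs every club of $M$ to be eventually diagonalized by some $k_i$ — which is (4) (each club appears in some $M_i$) together with normality of $\mathcal{U}_i$ in $M_i$ (so $M_i$-clubs belong to $\mathcal{U}_i$) and the diagonalizing property (3) — and one needs $k_j\subseteq^*\overline{k_i}$ for $i<j$, which uses $k_i\in M_j$ from (3) together with the same two facts. The step I would be most careful about is the absoluteness bookkeeping: regularity of $\kappa$ passing down to each $M_i$, and absoluteness of topological closure, since these are what let ``club of $M$'' and ``club of $M_i$'' be used interchangeably. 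None of this is difficult, but it is worth spelling out, and it is the only place the argument could go wrong.
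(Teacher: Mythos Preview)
Your proposal is correct and follows exactly the paper's approach: the paper derives this corollary by citing \cite{tomCohesive} for the fact that (1)--(4) make $\Cub_\kappa$ a simple $P_\lambda$-point in $M$, and then invoking Corollary~\ref{cor:cub-b-d}. Your write-up adds a correct self-contained sketch of the cited argument, which the paper itself omits.
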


Let us show how to deduce that the splitting number is large: 
 
 \begin{proposition}\label{Prop: sk in radin}
    If  $(1)$ through $(4)$ hold, then $M\models\mathfrak{s}_\kappa= \lambda$.
\end{proposition}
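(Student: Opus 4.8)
The plan is to prove $M\models\mathfrak{s}_\kappa\ge\lambda$ and $M\models\mathfrak{s}_\kappa\le\lambda$ separately; the first is the new content, the second is bookkeeping.

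\emph{Lower bound.} I would show that in $M$ no family $\mathcal{A}\subseteq[\kappa]^\kappa$ with $|\mathcal{A}|<\lambda$ is splitting. Fix such an $\mathcal{A}$. First, trap $\mathcal{A}$ inside a single intermediate model: by $(4)$ each $A\in\mathcal{A}$ lies in $P(\kappa)^{M_{j_A}}$ for some $j_A<\lambda$, and since $\lambda$ is regular in $M$ while $|\mathcal{A}|<\lambda$, the ordinal $i=\sup\{\,j_A : A\in\mathcal{A}\,\}$ is $<\lambda$; by the monotonicity in $(1)$, $\mathcal{A}\subseteq P(\kappa)^{M_i}$. Next, by $(2)$ the statement ``$\mathcal{U}_i$ is an ultrafilter on the Boolean algebra $P(\kappa)^{M_i}$'' holds in $M_i$ and hence absolutely, so for each $A\in\mathcal{A}$ exactly one of $A$, $\kappa\setminus A$ lies in $\mathcal{U}_i$. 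Finally, by $(3)$, $k_i\subseteq^* X$ for every $X\in\mathcal{U}_i$; thus if $A\in\mathcal{U}_i$ then $k_i\setminus A$ is bounded, and if $\kappa\setminus A\in\mathcal{U}_i$ then $k_i\cap A$ is bounded --- in either case $A$ does not split $k_i$. Since $(3)$ also gives $k_i\in[\kappa]^\kappa\cap M$, the set $k_i$ witnesses that $\mathcal{A}$ is not splitting in $M$. As $\mathcal{A}$ was arbitrary, $M\models\mathfrak{s}_\kappa\ge\lambda$.

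\emph{Upper bound.} By the Corollary immediately preceding this Proposition --- which applies Corollary \ref{cor:cub-b-d} to the fact that under $(1)$--$(4)$ the club filter is a simple $P_\lambda$-point in $M$ --- we have $M\models\mathfrak{b}_\kappa=\mathfrak{d}_\kappa=\lambda$. Since $\mathfrak{s}_\kappa\le\mathfrak{d}_\kappa$ for every regular uncountable $\kappa$ (a ZFC fact: given a dominating family, cut $\kappa$ into the consecutive blocks obtained by iterating each member, take the union of the ``even'' blocks, and note that any $X\in[\kappa]^\kappa$ is split by the partition coming from a function dominating $\beta\mapsto\min(X\setminus(\beta+1))+1$), we conclude $M\models\mathfrak{s}_\kappa\le\lambda$, and hence $M\models\mathfrak{s}_\kappa=\lambda$. (If $\kappa$ is inaccessible in $M$, as it is for the extender-based Magidor--Radin extension, one may instead cite Theorem \ref{Thm: Raghavan-Shelah}$(2)$, $\mathfrak{s}_\kappa\le\mathfrak{b}_\kappa=\lambda$.)

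\emph{Main obstacle.} The crux is the opening move of the lower bound --- getting the \emph{entire} family $\mathcal{A}$ inside one $M_i$ --- which is where regularity of $\lambda$, the monotonicity of the $M_i$, and the covering property $(4)$ all enter, and the only place $|\mathcal{A}|<\lambda$ is used. Everything after that is forced by design: $(M_i,\mathcal{U}_i,k_i)$ is precisely a ``local'' witness that $k_i$ diagonalizes an ultrafilter on $P(\kappa)^{M_i}$, so $k_i$ decides every subset of $\kappa$ in $M_i$ modulo a bounded set --- the same use of the $k_i$ as in the proof that the club filter is a simple $P_\lambda$-point, now fed arbitrary subsets of $\kappa$ rather than just clubs.
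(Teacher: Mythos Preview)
Your proof is correct and follows essentially the same route as the paper's: the paper also obtains the upper bound from $\mathfrak{s}_\kappa\le\mathfrak{d}_\kappa$ and the preceding corollary, and for the lower bound it likewise traps a small family inside a single $M_i$ via $(1)$ and $(4)$, then uses $(2)$ and $(3)$ to see that $k_i$ is unsplit. Your write-up is in fact a bit more explicit than the paper's (spelling out the use of regularity of $\lambda$ and sketching why $\mathfrak{s}_\kappa\le\mathfrak{d}_\kappa$), but the argument is the same.
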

\begin{proof}
    Since $\mathfrak{s}_\kappa\leq\mathfrak{d}_\kappa$, it suffices to prove that $\lambda\leq\mathfrak{s}_\kappa$. Suppose that $\mathcal{S}\in M$ is a collection of subsets of $\kappa$ of size less than $\lambda$. By items $(1)$ and $(4)$, there is some $i<\lambda$ such that $\mathcal{S}\subseteq P(\kappa)^{M_i}$. By $(2)$, for each $X\in \mathcal{S}$, either $X\in \mathcal{U}_i$ or $\kappa\setminus X\in U_i$. By $(3)$, $k_i$ diagonalizes $\mathcal{U}_i$, and therefore, for each $X\in \mathcal{S}$,  either $k_i\subseteq^* X$ or $k_i\subseteq^*\kappa\setminus X$. So $k_i$ is not split by any member of $\mathcal{S}$.  
\end{proof}
The conditions $(1)$ through $(4)$ also determine the value of the reaping number:
\begin{proposition}\label{Prop: rk in radin}
    If $(1)$ through $(4)$ hold, then $M\models\mathfrak{r}_\kappa=\cf(\lambda)$.
\end{proposition}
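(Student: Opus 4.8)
The plan is to prove the two inequalities $\mathfrak{r}_\kappa\le\cf(\lambda)$ and $\mathfrak{r}_\kappa\ge\cf(\lambda)$ separately. The first is where the structure $(1)$--$(4)$ really gets used: it amounts to exhibiting an unsplittable family of size $\cf(\lambda)$ built from the diagonalizing sets $k_i$. The second will follow by combining the already-established value $\mathfrak{d}_\kappa=\lambda$ with the Raghavan--Shelah inequality $\mathfrak{d}_\kappa\le\mathfrak{r}_\kappa$.

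For the upper bound, fix an increasing cofinal sequence $\l i_\alpha\mid\alpha<\cf(\lambda)\r$ in $\lambda$ and set $\mathcal{K}=\{k_{i_\alpha}\mid\alpha<\cf(\lambda)\}$, which by $(3)$ is a subfamily of $[\kappa]^\kappa$ of size at most $\cf(\lambda)$. I claim $\mathcal{K}$ is unsplittable, and this is essentially the argument of Proposition \ref{Prop: sk in radin}: given any $X\in[\kappa]^\kappa$ in $M$, by $(1)$ and $(4)$ there is $\beta<\lambda$ with $X\in P(\kappa)^{M_\beta}$; choosing $\alpha$ with $i_\alpha\ge\beta$ we get $X\in P(\kappa)^{M_{i_\alpha}}$ by $(1)$; then $(2)$ gives $X\in\mathcal{U}_{i_\alpha}$ or $\kappa\setminus X\in\mathcal{U}_{i_\alpha}$, and $(3)$ gives $k_{i_\alpha}\subseteq^* X$ or $k_{i_\alpha}\subseteq^*\kappa\setminus X$; in either case $X$ does not split $k_{i_\alpha}$. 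Hence no single set splits every member of $\mathcal{K}$, so $\mathcal{K}$ is unsplittable and $\mathfrak{r}_\kappa\le\cf(\lambda)$.

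For the lower bound, first note that $\kappa$ is inaccessible in $M$: it is measurable, hence inaccessible, in each $M_i$ by $(2)$, and any witness in $M$ to a failure of inaccessibility of $\kappa$ (a cofinal $f\colon\mu\to\kappa$ with $\mu<\kappa$, or a surjection of $P(\mu)$ onto $\kappa$) can be coded by a subset of $\kappa$, which by $(4)$ lies in some $M_i$, contradicting inaccessibility there. Since the club filter is a simple $P_\lambda$-point in $M$, Corollary \ref{cor:cub-b-d} gives $M\models\mathfrak{d}_\kappa=\lambda$; thus Theorem \ref{Thm: Raghavan-Shelah}(1) applies and yields $\lambda=\mathfrak{d}_\kappa\le\mathfrak{r}_\kappa$ in $M$. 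Combining with the upper bound, $\cf(\lambda)\ge\mathfrak{r}_\kappa\ge\lambda\ge\cf(\lambda)$, so $\mathfrak{r}_\kappa=\cf(\lambda)$ (and in particular $\lambda$ is regular).

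The main obstacle is the lower bound, and specifically the verification that Raghavan--Shelah is applicable, i.e. that $\kappa$ remains inaccessible in $M$; the upper bound is a direct transcription of the splitting-number computation. A self-contained proof of the lower bound from $(1)$--$(4)$ alone would instead require splitting an arbitrary $\mathcal{A}\subseteq[\kappa]^\kappa$ with $|\mathcal{A}|<\cf(\lambda)$ by a single set: after fixing $i$ with $\mathcal{A}\subseteq P(\kappa)^{M_i}$, the set $k_i$ splits each $A\in\mathcal{A}\cap\mathcal{U}_i$ but fails when $\kappa\setminus A\in\mathcal{U}_i$ (then $A\cap k_i$ is bounded), so one would have to interleave several of the $k_j$'s and do so uniformly over all of $\mathcal{A}$ at once; sidestepping this is precisely what Raghavan--Shelah buys us.
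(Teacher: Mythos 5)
Your proof is correct and follows essentially the same route as the paper: the unsplittable family $\{k_{i_\alpha}\mid\alpha<\cf(\lambda)\}$ for the upper bound is exactly the paper's reaping family, and the lower bound in the paper likewise reduces to the earlier corollary that $\mathfrak{b}_\kappa=\mathfrak{d}_\kappa=\lambda$ together with Raghavan--Shelah (the paper quotes it as $\mathfrak{b}_\kappa\le\mathfrak{r}_\kappa$, you as $\mathfrak{d}_\kappa\le\mathfrak{r}_\kappa$, which is the same appeal). Your explicit verification that $\kappa$ remains inaccessible in $M$, so that Theorem \ref{Thm: Raghavan-Shelah} applies, is a detail the paper leaves implicit and is a worthwhile addition.
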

\begin{proof}
    Again, since $\mathfrak{b}_\kappa\leq \mathfrak{r}_\kappa$, it suffices to prove that $\mathfrak{r}_\kappa\leq \cf(\lambda)$. Let $\{\alpha_i\mid i<\cf(\lambda)\}\in M$ be cofinal in $\lambda$. We claim that $\{ k_{\alpha_i}\mid i<\cf(\lambda)\}$ is a reaping family. To see this, let $X\in M$ be any subset of $\kappa$. By $(4)$ there is $i$ such that $X\in M_i$. Let $i_0<\lambda$ such that $i\leq \alpha_{i_0}$. By $(1)$, $X\in M_{\alpha_{i_0}}$ and by $(2)$, either $X\in \mathcal{U}_{\alpha_{i_0}}$ or $\kappa\setminus X\in \mathcal{U}_{\alpha_{i_0}}$. By $(3)$, $k_{\alpha_{i_0}}\subseteq^* X$ or $k_{\alpha_{i_0}}\subseteq^* \kappa\setminus X$, as desired. 
\end{proof}
\begin{corollary}
    In the models of \cite{OmerMoti}, $\mathfrak{b}_\kappa=\mathfrak{d}_\kappa=\mathfrak{r}_\kappa=\mathfrak{s}_\kappa=\kappa^{++}=2^\kappa$.
\end{corollary}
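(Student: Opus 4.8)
The plan is to combine the three propositions just established about the extender-based Magidor--Radin model with the large-cardinal hypothesis used in \cite{OmerMoti}, namely that one forces over a measurable $\kappa$ with $o(\kappa)=\kappa^{++}$, taking $\lambda=\kappa^{++}$. First I would record that in the Ben-Neria--Gitik construction the final model $M=V[G]$ satisfies conditions $(1)$ through $(4)$ with $\lambda=\kappa^{++}$; this is precisely the content of \cite{OmerMoti} (and is how $\mathfrak{s}_\kappa=\kappa^{++}$ was obtained there). Once $(1)$--$(4)$ are in hand, Proposition~\ref{Prop: sk in radin} gives $\mathfrak{s}_\kappa=\lambda=\kappa^{++}$, and since $\lambda=\kappa^{++}$ is regular, Proposition~\ref{Prop: rk in radin} gives $\mathfrak{r}_\kappa=\cf(\lambda)=\kappa^{++}$ as well. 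The Corollary immediately preceding Proposition~\ref{Prop: sk in radin} (applying \ref{cor:cub-b-d} via the fact that in $V[G]$ the club filter is a simple $P_\lambda$-point) gives $\mathfrak{b}_\kappa=\mathfrak{d}_\kappa=\lambda=\kappa^{++}$. So all four invariants equal $\kappa^{++}$.

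Next I would account for the final equality $\kappa^{++}=2^\kappa$ in $M$. This is again a feature of the forcing in \cite{OmerMoti}: the extender-based Magidor--Radin poset blows up $2^\kappa$ to exactly $\kappa^{++}$ while preserving cardinals and the regularity of $\kappa$ (the reflection of $o(\kappa)=\kappa^{++}$ is what allows the extenders to be long enough for this, and no more). In fact one could also argue abstractly: conditions $(1)$--$(4)$ force $2^\kappa\geq\lambda$ since the sequence $\langle k_i\mid i<\lambda\rangle$ must consist of distinct (mod bounded) sets — indeed $k_i$ decides membership in $\mathcal{U}_i$ but $k_j$ for $j>i$ need not, so the $k_i$ are $\subseteq^*$-increasing in information and hence pairwise distinct mod finite — giving $\lambda\leq 2^\kappa$; and the ground-model hypothesis plus a nice-name count in \cite{OmerMoti} gives $2^\kappa\leq\kappa^{++}=\lambda$. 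Combining, $2^\kappa=\kappa^{++}$.

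Assembling: in $M$ we get $\mathfrak{b}_\kappa=\mathfrak{d}_\kappa=\mathfrak{r}_\kappa=\mathfrak{s}_\kappa=\lambda=\kappa^{++}=2^\kappa$, which is exactly the statement of the Corollary. The main obstacle is not any of the cardinal-characteristic computations — those are now immediate from Propositions~\ref{Prop: sk in radin} and \ref{Prop: rk in radin} and Corollary~\ref{cor:cub-b-d} — but rather the bookkeeping of verifying that the specific model constructed in \cite{OmerMoti} genuinely satisfies the abstract hypotheses $(1)$--$(4)$ and that $2^\kappa=\kappa^{++}$ there. I expect this to be handled by a direct citation to \cite{OmerMoti} rather than reproved, since the whole point of isolating conditions $(1)$--$(4)$ was to encapsulate exactly what is needed from that construction; the one point worth a sentence of care is that $\cf(\lambda)=\lambda$ here, so that Proposition~\ref{Prop: rk in radin} yields $\mathfrak{r}_\kappa=\kappa^{++}$ rather than something smaller.
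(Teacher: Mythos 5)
Your proposal is correct and follows exactly the route the paper intends: the corollary is stated without proof precisely because it is the assembly of Propositions~\ref{Prop: sk in radin} and~\ref{Prop: rk in radin} and the preceding corollary on $\mathfrak{b}_\kappa=\mathfrak{d}_\kappa$, together with the cited facts that the model of \cite{OmerMoti} satisfies $(1)$--$(4)$ with $\lambda=\kappa^{++}=2^\kappa$. The only soft spot is your parenthetical ``abstract'' argument that $(1)$--$(4)$ alone force $\lambda\leq 2^\kappa$ --- nothing in those conditions guarantees the $k_i$ are pairwise distinct modulo bounded sets --- but this is inessential since the citation to \cite{OmerMoti} already supplies $2^\kappa=\kappa^{++}$.
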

\begin{corollary}
    In the models of \cite{SimonOmer}, $\mathfrak{b}_\kappa=\mathfrak{d}_\kappa=\mathfrak{r}_\kappa=\mathfrak{s}_\kappa=\kappa^{+}<2^\kappa$.
\end{corollary}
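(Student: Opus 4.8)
The plan is to reduce everything to the structural conditions $(1)$ through $(4)$ isolated above, now instantiated with $\lambda=\kappa^+$ instead of $\kappa^{++}$. First I would recall, from the analysis of the Merimovich extender-based Magidor--Radin forcing in \cite{CarmiRadin} and its use in \cite{SimonOmer,tomCohesive}, that the model $M$ of \cite{SimonOmer} is a generic extension $V[G]$ in which $(1)$ through $(4)$ hold with $\lambda=\kappa^+$: the models $M_i$ are the canonical intermediate extensions of the Prikry-type iteration, $\mathcal{U}_i\in M_i$ is the image in $M_i$ of the relevant normal measure on $\kappa$, $k_i\in[\kappa]^\kappa$ is the tail of the generic club diagonalizing $\mathcal{U}_i$ (so in particular $k_j\in M_i$ for $j<i$), and $P(\kappa)^M=\bigcup_{i<\kappa^+}P(\kappa)^{M_i}$ because every subset of $\kappa$ in $M$ is decided at a bounded stage of the iteration. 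In contrast with the \cite{OmerMoti} situation handled just above, the tower here has length $\kappa^+$ rather than $\kappa^{++}$, while the extender part of the forcing still leaves $2^\kappa$ free; in particular, one arranges $2^\kappa>\kappa^+$, which is the whole point of \cite{SimonOmer}.

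Granting this, the conclusion is immediate from results already proved. The Corollary preceding Proposition \ref{Prop: sk in radin} gives $M\models\mathfrak{b}_\kappa=\mathfrak{d}_\kappa=\kappa^+$ (via the observation from \cite{tomCohesive} that under $(1)$ through $(4)$ the club filter is a simple $P_{\kappa^+}$-point, together with Corollary \ref{cor:cub-b-d}); Proposition \ref{Prop: sk in radin} gives $M\models\mathfrak{s}_\kappa=\kappa^+$; and Proposition \ref{Prop: rk in radin} gives $M\models\mathfrak{r}_\kappa=\cf(\kappa^+)=\kappa^+$. Combined with $2^\kappa>\kappa^+$, this yields $\mathfrak{b}_\kappa=\mathfrak{d}_\kappa=\mathfrak{r}_\kappa=\mathfrak{s}_\kappa=\kappa^+<2^\kappa$ in $M$, as claimed. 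Alternatively, one may note that $(1)$ through $(4)$ with $\lambda=\kappa^+$ yield a simple pseudo-$P_{\kappa^+}$-point and then invoke Theorem \ref{Main Thm 2} directly.

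The only genuine work lies in the first paragraph: one must extract the data $\langle M_i\rangle$, $\langle\mathcal{U}_i\rangle$, $\langle k_i\rangle$ from the forcing used in \cite{SimonOmer} and verify the non-obvious clauses $k_j\in M_i$ for $j<i$ and $P(\kappa)^M=\bigcup_{i<\kappa^+}P(\kappa)^{M_i}$, the latter resting on the chain condition and Prikry property of the forcing and on the fact that the iteration has length exactly $\kappa^+$ in this construction. Since this bookkeeping is precisely what is carried out for this forcing in \cite{CarmiRadin} and exploited in \cite{tomCohesive}, the argument amounts to citing and specializing that analysis; no cardinal-characteristic input beyond Propositions \ref{Prop: sk in radin} and \ref{Prop: rk in radin} and the Corollary above is required.
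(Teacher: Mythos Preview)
Your proposal is correct and follows essentially the same route as the paper: the corollary is stated without proof because it is immediate from the preceding results once one knows that the models of \cite{SimonOmer} satisfy conditions $(1)$ through $(4)$ with $\lambda=\kappa^+$ and $2^\kappa>\kappa^+$, after which the unnamed corollary ($\mathfrak{b}_\kappa=\mathfrak{d}_\kappa=\lambda$), Proposition~\ref{Prop: sk in radin}, and Proposition~\ref{Prop: rk in radin} give the conclusion exactly as you describe. One small correction to your alternative route: obtaining the simple $\pi P_{\kappa^+}$-point in the paper uses condition $(5)$ as well, not just $(1)$ through $(4)$; but your primary argument does not rely on this.
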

This reduces the upper bound on the consistency results obtained by Brooke-Taylor--Fischer--Friedman--Montoya \cite{BROOKETAYLOR201737} from a supercompact cardinal to the low levels of strong cardinals.

To obtain the configuration of the reaping number above, Ben-Neria and Garti \cite{SimonOmer} prove that some of the ultrafilters $\mathcal{U}_i$ cohere, that is:
\begin{itemize}
    \item [(5)] There is an unbounded $S\subseteq \lambda$ such that for every $i<j$ in $S$, $\mathcal{U}_i\subseteq \mathcal{U}_j$.
\end{itemize}
They used (5), for example, to deduce that $\kappa$ is measurable in $M$. In fact, the ultrafilter they produce is a $\kappa$-complete simple $\pi P_{\lambda}$-point:
\begin{theorem}
    Assume that $\l \mathcal{U}_i,\mid i<\lambda\r,\l k_i\mid i<\lambda\r\in M$ and $(1)$ through $(5)$ hold. Then in $M$ there is a normal ultrafilter $\mathcal{U}$ which is a simple $\pi P_{\lambda}$-point. In particular, $\pi\mathfrak{u}^{com}_\kappa=\lambda$.
\end{theorem}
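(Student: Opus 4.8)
The plan is to let $\mathcal{U}=\bigcup_{i\in S}\mathcal{U}_i$, where $S\in M$ is the unbounded subset of $\lambda$ given by (5). The entire argument rests on one localization fact: for every $i\in S$ we have $\mathcal{U}\cap P(\kappa)^{M_i}=\mathcal{U}_i$. Only the inclusion $\subseteq$ requires an argument: given $X\in\mathcal{U}\cap P(\kappa)^{M_i}$, fix $j\in S$ with $X\in\mathcal{U}_j$; if $j\le i$ then $X\in\mathcal{U}_i$ by (5), while if $j>i$ then (2) gives $X\in\mathcal{U}_i$ or $\kappa\setminus X\in\mathcal{U}_i$, and the latter is impossible since (5) would then force $\kappa\setminus X\in\mathcal{U}_j$, contradicting $X\in\mathcal{U}_j$. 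It is then quick to check that $\mathcal{U}$ is a uniform ultrafilter over $\kappa$ lying in $M$: it is an increasing union of filters by (5) and uniform because it extends each normal $\mathcal{U}_i$; for any $X\subseteq\kappa$ in $M$, properties (4) and (1) place $X$ in some $M_i$ with $i\in S$, so by (2) exactly one of $X,\kappa\setminus X$ belongs to $\mathcal{U}_i\subseteq\mathcal{U}$, and (5) prevents both from entering $\mathcal{U}$.

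The crucial step is that $\mathcal{U}$ is normal in $M$. Let $\vec A=\l A_\alpha\mid\alpha<\kappa\r\in M$ with each $A_\alpha\in\mathcal{U}$. Although (4) speaks only of subsets of $\kappa$, it nonetheless localizes $\vec A$: encoding $\vec A$ as a single subset of $\kappa$ via the G\"odel pairing bijection $\kappa\times\kappa\cong\kappa$ (which is absolute between transitive models of $\mathrm{ZFC}$), property (4) puts that code into some $M_m$, whence $\vec A\in M_m$. Now fix $i\in S$ with $i\ge m$. By the localization fact each $A_\alpha\in\mathcal{U}_i$, and $\vec A\in M_i$, so the normality of $\mathcal{U}_i$ inside $M_i$ together with the absoluteness of diagonal intersections gives $\Delta_{\alpha<\kappa}A_\alpha\in\mathcal{U}_i\subseteq\mathcal{U}$. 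In particular $\mathcal{U}$ is $\kappa$-complete, so it is a genuine normal ultrafilter and $\kappa$ is measurable in $M$.

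It then remains to compute the two $\pi$-invariants of $\mathcal{U}$. The family $\{k_i\mid i\in S\}$ is a $\pi$-base for $\mathcal{U}$: any $A\in\mathcal{U}$ lies in some $\mathcal{U}_i$ with $i\in S$, and then $k_i\subseteq^* A$ by (3); hence $\pi\mathfrak{ch}(\mathcal{U})\le\lambda$. Conversely, if $\mathcal{A}\subseteq\mathcal{U}$ has size $<\lambda=\cf(\lambda)$, then the unboundedness of $S$ together with (5) produces $i\in S$ with $\mathcal{A}\subseteq\mathcal{U}_i$, and then $k_i\in[\kappa]^\kappa$ is a pseudo-intersection of $\mathcal{A}$ by (3); hence $\pi\mathfrak{p}(\mathcal{U})\ge\lambda$. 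Since $\pi\mathfrak{p}(\mathcal{U})\le\pi\mathfrak{ch}(\mathcal{U})$ in general, we obtain $\pi\mathfrak{p}(\mathcal{U})=\pi\mathfrak{ch}(\mathcal{U})=\lambda$, i.e.\ $\mathcal{U}$ is a simple $\pi P_\lambda$-point. Finally $\pi\mathfrak{u}^{com}_\kappa\le\pi\mathfrak{ch}(\mathcal{U})=\lambda$, and the reverse inequality follows from Theorem \ref{Thm:b=d=lambda} (which gives $\pi\mathfrak{ch}(W)\ge\mathfrak{d}_\kappa$ for every $\kappa$-complete uniform $W$) together with the already-established equality $\mathfrak{d}_\kappa=\lambda$ under (1)--(4).

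The step I expect to be genuinely delicate is the normality argument: the hypotheses only guarantee that $M$ and the $M_i$ agree cofinally on $P(\kappa)$, not on $P(\kappa)^{\kappa}$, so a priori $\Delta_{\alpha<\kappa}A_\alpha$ need not be captured inside any single $M_i$. The fix---coding a $\kappa$-sequence of subsets of $\kappa$ as one subset of $\kappa$ and invoking (4)---is elementary but is the heart of the matter; everything else is routine bookkeeping with the coherence property (5) and the diagonalization property (3).
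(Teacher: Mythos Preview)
Your proof is correct and follows exactly the paper's approach: define $\mathcal{U}=\bigcup_{i\in S}\mathcal{U}_i$, observe $\pi\mathfrak{ch}(\mathcal{U})\le\lambda$, and use regularity of $\lambda$ together with (3) and (5) to get $\pi\mathfrak{p}(\mathcal{U})\ge\lambda$. The paper's proof is a three-line sketch that omits the verifications you supply---in particular the localization fact $\mathcal{U}\cap P(\kappa)^{M_i}=\mathcal{U}_i$ and the normality argument via coding $\kappa$-sequences as subsets of $\kappa$---so your version is simply a more explicit rendering of the same argument.
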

\begin{proof}
    Consider the ultrafilter $\mathcal{U}=\bigcup_{i\in S}\mathcal{U}_i$. It is easy to see that $\pi \mathfrak{ch}(\mathcal{U})\leq \lambda$. We claim that $\lambda\leq\pi \mathfrak{p}(U)$, which finishes the proof. Suppose that $\l X_i\mid i<\rho\r\subseteq U$, for some $\rho<\lambda$. Then, similar arguments show that there is $j<\lambda$ such that $k_j$ is a pseudo-intersection for the sequence  $\l X_i\mid i<\rho\r$. 
\end{proof}
It is an open problem whether one can obtain \(\mathfrak u_\kappa = \kappa^+ < 2^\kappa\) at an inaccessible cardinal \(\kappa\) from much less than a supercompact cardinal. The previous theorem shows that current techniques suffice to obtain the analogous result for \(\pi\mathfrak{u}_\kappa\) from hypotheses at the level of strong cardinals.

In fact, to obtain a model $M$ satisfying $(1)$ through $(5)$, the authors of \cite{SimonOmer} used a measurable cardinal $\kappa$ such that $o(\kappa)$ is a weakly compact cardinal above \(\kappa\). However, if we only wish to keep $\kappa$ measurable and play with the values of $\mathfrak{r}_\kappa$ and $\mathfrak{s}_\kappa$, we only need to secure (1) through (4), and therefore we can get away with much less; for example, \(o(\kappa) = \kappa^{+4}\) suffices. (This
uses \cite[Claim 5.9]{CarmiRadin}
to ensure the preservation of measurability.)

\begin{question}
    Can one determine the values of other generalized cardinal characteristics at $\kappa$ in the extender-based Magidor-Radin model?
\end{question}
\section{Lower Bounds}\label{Section :Lower bounds}
\subsection{The Strength of a \(P_\lambda\)-point}
Gitik showed that if there is $P_{\kappa^{++}}$-point then there is an inner model with a $\mu$-measurable. The argument can be found in \cite{tomCohesive}. In terms of consistency strength, this is already above $o(\kappa)=\kappa^{++}$. Here we improve his result a bit.

\begin{lemma}\label{lma:factor_crit}
    Suppose \(j :  V\to M\) is an elementary embedding with critical point \(\kappa\) and \(\alpha < ((2^\kappa)^+)^M\). Let \(D\) be the ultrafilter on \(\kappa\) derived from \(j\) using \(\alpha\) and \(k : M_D\to M\) be the canonical factor embedding. Then \(\crit(k) > \alpha.\)
    \begin{proof}
        Let \(f\in \text{ran}(k)\) be a surjection from \(P(\kappa)\) onto \(\alpha+1\), which exists since \(k[M_D]\) is an elementary substructure of \(M\) and \(\{\kappa,\alpha\}\in k[M_D]\).
        Since \(P(\kappa)\subseteq M_D\), we have \(P(\kappa)\subseteq \text{ran}(k)\). Hence \(\alpha + 1 = f[P(\kappa)]\subseteq \text{ran}(k)\).
    \end{proof}
\end{lemma}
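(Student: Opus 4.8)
The plan is to show directly that every ordinal \(\le\alpha\) lies in \(\text{ran}(k)=k[M_D]\); since \(\crit(k)\) is precisely the least ordinal not in \(\text{ran}(k)\), this is equivalent to the conclusion \(\crit(k)>\alpha\). Two facts should be isolated at the outset. First, the seed itself lies in \(\text{ran}(k)\): by the definition of the factor embedding, \(k\) sends the canonical generator \([\id_\kappa]_D\) of \(D\) in \(M_D\) to \(j(\id_\kappa)(\alpha)=\alpha\) (note \(\alpha<j(\kappa)\) since \(j(\kappa)\) is inaccessible in \(M\), so the derived ultrafilter makes sense), hence \(\alpha=k([\id_\kappa]_D)\in\text{ran}(k)\). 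Second, \(D\) is \(\kappa\)-complete — it is derived from an embedding with critical point \(\kappa\) — so the usual package for \(\kappa\)-complete derived ultrafilters applies: \(P(\kappa)^{M_D}=P(\kappa)^V=P(\kappa)^M\) (for \(X\subseteq\kappa\) one has \(X=j_D(X)\cap\kappa=j(X)\cap\kappa\)), the factor embedding satisfies \(\crit(k)>\kappa\), and hence \(k\) fixes \(\kappa\) and is the identity on every member of \(P(\kappa)\). In particular \(P(\kappa)^V\subseteq\text{ran}(k)\), and \(P(\kappa)^V=k(P(\kappa)^{M_D})\) is itself an element of \(\text{ran}(k)\).

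The heart of the argument is to feed in the size hypothesis at this point. Since \(\alpha<((2^\kappa)^+)^M\), we have \(|\alpha+1|^M\le (2^\kappa)^M=|P(\kappa)^M|^M\), so \(M\) satisfies ``there is a surjection from \(P(\kappa)\) onto \(\alpha+1\)''. This is a first-order assertion whose parameters \(P(\kappa)\) and \(\alpha\) both lie in \(\text{ran}(k)\), and \(\text{ran}(k)\prec M\); so I may choose a witness \(f\in\text{ran}(k)\), and \(f\) is then an honest surjection from \(P(\kappa)^V=P(\kappa)^M\) onto \(\alpha+1\). Finally, \(\text{ran}(k)\) is closed under applying its elements to its elements and contains every member of \(P(\kappa)^V\), so \(\alpha+1=\{f(X):X\in P(\kappa)^V\}\subseteq\text{ran}(k)\), which is exactly what was needed.

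The lemma is not difficult; the one genuinely load-bearing idea is the translation of ``\(\alpha<((2^\kappa)^+)^M\)'' into ``\(M\) surjects \(P(\kappa)\) onto \(\alpha+1\)'', which is the precise sense in which \(\alpha\) is reachable from \(P(\kappa)\) — the one large set that \(\kappa\)-completeness of \(D\) forces to sit pointwise inside \(\text{ran}(k)\). The only technical point requiring any care is the fact \(\crit(k)>\kappa\) (equivalently \(\kappa\in\text{ran}(k)\)), which is where \(\kappa\)-completeness of the derived ultrafilter is really used; I would cite this, together with the absoluteness of \(P(\kappa)\) between \(V\), \(M_D\) and \(M\), rather than reprove it.
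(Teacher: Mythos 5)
Your proposal follows the same route as the paper's proof: place \(\alpha\), \(\kappa\), and (elementwise) \(P(\kappa)\) into \(\text{ran}(k)\), use \(\alpha < ((2^\kappa)^+)^M\) together with \(\text{ran}(k)\prec M\) to find a surjection \(f : P(\kappa)\to\alpha+1\) inside \(\text{ran}(k)\), and conclude \(\alpha+1 = f[P(\kappa)]\subseteq \text{ran}(k)\). That part is correct and matches the paper.

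The gap is precisely the step you single out and propose to cite: the assertion that \(\crit(k)>\kappa\) belongs to ``the usual package for \(\kappa\)-complete derived ultrafilters.'' There is no such general fact; it fails without the hypothesis on \(\alpha\). Concretely, let \(W\) be a normal measure on \(\kappa\), let \(j_W : V\to M_W\) be its ultrapower, let \(j_1 : M_W\to N\) be the ultrapower of \(M_W\) by \(j_W(W)\), and set \(j = j_1\circ j_W\) and \(\alpha = j_W(\kappa)\). For \(X\subseteq\kappa\) one has \(j_W(\kappa)\in j(X) = j_1(j_W(X))\) if and only if \(j_W(X)\in j_W(W)\), i.e.\ \(X\in W\); so the derived ultrafilter is \(D = W\), which is \(\kappa\)-complete and even normal. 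Yet the canonical factor map \(k : M_W\to N\) sends \(\kappa = [\id]_W\) to \(j(\id)(j_W(\kappa)) = j_W(\kappa) > \kappa\), so \(\crit(k) = \kappa\). (This does not contradict the lemma, because here \(((2^\kappa)^+)^N\leq j_W(\kappa) = \alpha\).) The point is that \(\kappa\in\text{ran}(k)\) is exactly where the hypothesis \(\alpha < ((2^\kappa)^+)^M\) first has to enter, so deferring it to a citation leaves the essential content of the lemma unproved. A correct argument is short: if \(\alpha<\kappa\) then \(D\) is principal and \(k = j\), so assume \(\alpha\geq\kappa\); then \(\kappa\) is definable in \(M\) from \(\alpha\) as the least cardinal \(\delta\) with \((2^\delta)^M\geq|\alpha|^M\), since \((2^\delta)^M<\kappa\leq|\alpha|^M\) for \(\delta<\kappa\) (as \(\kappa\) is inaccessible in \(M\)) while \((2^\kappa)^M\geq|\alpha|^M\) by hypothesis. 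Since \(\alpha\in k[M_D]\prec M\), this gives \(\kappa\in\text{ran}(k)\), hence \(k(\kappa)=\kappa\), and the rest of your argument goes through verbatim. In fairness, the paper's proof is equally laconic at this point — it simply asserts \(\{\kappa,\alpha\}\in k[M_D]\) — but your explicit attribution of the step to \(\kappa\)-completeness alone is wrong and should be replaced by an argument of the above kind.
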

\begin{theorem}\label{Thm: p-pointtostrong}
    If there is a $P_{\kappa^{++}}$-point $U$, then there is an inner model with a \(2\)-strong cardinal.
\end{theorem}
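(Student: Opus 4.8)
The plan is to argue by contradiction. Suppose there is no inner model with a \(2\)-strong cardinal; then there is no inner model with a Woodin cardinal, so the core model \(K\) exists, is universal, satisfies weak covering, and \(K\models\) ``there is no \(2\)-strong cardinal''. I will derive a contradiction from the hypothesised \(P_{\kappa^{++}}\)-point \(U\) on \(\kappa\). As a \(P_{\kappa^+}\)-point \(U\) is \(\kappa\)-complete, so \(\crit(j_U)=\kappa\) and \(j_U\restriction V_\kappa=\id\). Since \(U\) is a \(P_{\kappa^{++}}\)-point, every \(\subseteq^*\)-decreasing sequence in \(U\) of length \(\le\kappa^+\) has a lower bound in \(U\), so \(\mathfrak t(U)\ge\kappa^{++}\); by Proposition~\ref{Prop: d less ch, dp less b}(2) and the Claim that \(\mathfrak b_\kappa\le\cf^V(j_U(\kappa))\) this gives \(\cf^V(j_U(\kappa))\ge\mathfrak b_\kappa\ge\kappa^{++}\). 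The same applies to every ultrafilter \(\le_{RK}U\): by monotonicity of \(\mathfrak t\) under \(\le_{RK}\) and \(\mathfrak t=\mathfrak p\), such an ultrafilter is again a \(P_{\kappa^{++}}\)-point whose ultrapower sends \(\kappa\) to an ordinal of \(V\)-cofinality \(\ge\kappa^{++}\).

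\emph{The iteration.} The restriction \(i:=j_U\restriction K\colon K\to K^{M_U}\) is elementary into an iterable premouse, hence by the standard analysis of embeddings of \(K\) it is the iteration map of a linear iteration \(\langle K_\alpha, i_{\alpha\beta}, E_\alpha\mid\alpha\le\beta\le\theta\rangle\) of \(K\), with \(K_0=K\), \(i_{0\theta}=i\), and \(\crit(E_0)=\kappa\). As \(j_U\) is a single ultrapower by an ultrafilter on \(\kappa\), \(i(\kappa)=j_U(\kappa)<(2^\kappa)^+\); together with the fact that the critical points \(\kappa_\alpha=\crit(E_\alpha)\) are non-decreasing and \(i_{0\theta}(\kappa)\ge\theta\), this bounds the length \(\theta\) of the iteration below \((2^\kappa)^+\).

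\emph{Forcing a long extender.} For \(\beta<((2^\kappa)^+)^{M_U}\) the derived ultrafilter \(D_\beta=\{X\subseteq\kappa\mid\beta\in j_U(X)\}\) satisfies \(D_\beta\le_{RK}U\), and by Lemma~\ref{lma:factor_crit} the factor embedding \(k_\beta\colon M_{D_\beta}\to M_U\) has \(\crit(k_\beta)>\beta\). Since \(j_U=k_\beta\circ j_{D_\beta}\) and both \(j_{D_\beta}\restriction K\) and \(k_\beta\restriction K^{M_{D_\beta}}\) are themselves linear iterations, \(j_{D_\beta}\restriction K\) is precisely the initial segment of \(\langle K_\alpha,\dots\rangle\) cut off at the least stage whose critical point exceeds \(\beta\) — an iteration all of whose critical points lie below the first few \(V\)-cardinals past \(\beta\), yet which, by the first paragraph applied to \(D_\beta\), moves \(\kappa\) to an ordinal of \(V\)-cofinality \(\ge\kappa^{++}\). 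On the other hand, each \(K_\alpha\) is an iterate of \(K\), so \(K_\alpha\models\) ``there is no \(2\)-strong cardinal'', which via weak covering pins the Mitchell--Steel index of \(E_\alpha\) below \((\kappa_\alpha^{++})^{K_\alpha}\) and so limits how fast \(i_{0\alpha}(\kappa)\) can grow. The key step is to show these two facts are incompatible unless, at some stage \(\alpha\), the iteration applies an extender \(E_\alpha\) with \(V_{\kappa_\alpha+2}\cap K_\alpha\subseteq\Ult(K_\alpha,E_\alpha)\) — one witnessing that \(\kappa_\alpha\) is \(2\)-strong in \(K_\alpha\). Then by elementarity of \(i_{0\alpha}\colon K\to K_\alpha\), \(K\) satisfies ``there is a \(2\)-strong cardinal'', contradicting our assumption since \(K\) is an inner model.

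\emph{The main obstacle.} This last implication is the crux: a fine analysis of the critical sequence of \(j_U\restriction K\) under the index constraint coming from ``no \(2\)-strong'', using weak covering to locate the relevant successor cardinals of the iterates, and extracting from the \(P_{\kappa^{++}}\)-hypothesis — for \(U\) and, crucially, simultaneously for all of its derived ultrafilters \(D_\beta\) — enough uniformity to rule out an iteration built entirely from ``short'' extenders whose images of \(\kappa\) would then have \(V\)-cofinality at most \(\kappa^+\). The strengthening of Lemma~\ref{lma:factor_crit} from derived measures to derived extenders is what permits carrying the argument up to the relevant heights below \(j_U(\kappa)\). Everything else is routine given the cardinal-characteristic inequalities of Section~2 and the basic structure theory of \(K\).
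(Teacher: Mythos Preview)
Your proposal has a genuine gap at exactly the point you label the ``main obstacle'': you never prove that an iteration of \(K\) using only short extenders (indices below \((\kappa_\alpha^{++})^{K_\alpha}\)) must send \(\kappa\) to an ordinal of \(V\)-cofinality at most \(\kappa^+\). In fact this is false in general. An iteration that simply repeats the image of a single normal measure on \(\kappa\) for \(\kappa^{++}\) many steps uses only measures (the shortest possible extenders) yet sends \(\kappa\) to the supremum of the critical points, an ordinal of \(V\)-cofinality \(\kappa^{++}\). So the cofinality bound \(\cf^V(j_U(\kappa))\ge\kappa^{++}\) by itself places no constraint on the length of the individual extenders, and your argument collapses. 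The auxiliary claim that \(j_{D_\beta}\restriction K\) is literally an initial segment of the iteration for \(j_U\restriction K\) is also unjustified; factoring \(j_U=k_\beta\circ j_{D_\beta}\) in \(V\) does not automatically align with the normal iteration decomposition on the \(K\)-side.

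The paper's proof uses the \(P_{\kappa^{++}}\)-hypothesis in a completely different way, never touching \(\cf^V(j_U(\kappa))\). It focuses on the \emph{first} extender \(E_0\) of the iteration \(j_U\restriction K\). The anti-large-cardinal assumption forces the supremum \(\gamma\) of the generators of \(E_0\) to lie below \((\kappa^{++})^K\). Now the \(P_{\kappa^{++}}\)-property is applied twice: first, for each \(\alpha<\gamma\), since \(E_0(\alpha)\subseteq U_\alpha\le_{RK}U\) and \(|E_0(\alpha)|\le(2^\kappa)^K=\kappa^+\), one finds \(B_\alpha\in U\) with \(f_\alpha[B_\alpha]\subseteq^* X\) for all \(X\in E_0(\alpha)\); second, since \(\gamma<\kappa^{++}\), one finds a single \(B\in U\) below all the \(B_\alpha\). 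The tail filter generated by \(B\) lies in \(M_U\), and from it (together with \(f_\kappa\)) one can uniformly define each \(E_0\restriction\alpha\) inside \(M_U\), whence \(E_0\in M_U\). Maximality of the core model then gives \(E_0\in K^{M_U}\), contradicting the fact that \(E_0\) was applied on the iteration to \(K^{M_U}\). The essential idea you are missing is this reconstruction of \(E_0\) in \(M_U\) from a single diagonalizing set; the \(P_{\kappa^{++}}\)-property is used combinatorially, not via any cofinality computation.
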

\begin{proof}
    Assume towards a contradiction that there is no inner model with a \(2\)-strong cardinal. Let $E_0$ be the first extender used in the unique normal iteration \(i : K\to j_U(K)\). Note that this iteration exists and \(i = j_U\restriction K\) by Schindler's theorem \cite[Corollary 3.1]{schindler_2006}. (In fact, for core models at the level of strong cardinals, the theorem is due to Steel \cite[Theorem 8.13]{SteelBook}.)
    Then $j_U\restriction {K}=k\circ i_{E_0}$, where $k$ is the embedding given by the tail of the iteration and the critical point of $k$ is  some $M_{E_0}$-measurable cardinal greater than $\kappa$ (and so above  $(\kappa^{++})^{M_{E_0}}$). 
    Let $\gamma$ be the supremum of the generators\footnote{A generator of $E_0$ is an ordinal $\delta$ such that for every $\alpha<\delta$ and every $f:\kappa\to\kappa$, $j_U(f)(\alpha)\neq\delta$.} of $E_0$.
    Note that $\gamma\leq(\kappa^{++})^{M_{E_0}}$ since otherwise, by coherence and the initial segment condition on the extender sequence of the core model, $E_0\restriction (\kappa^{++})^{M_{E_0}}\in M_{E_0}$ and witnesses that \(\kappa\) is \(2\)-strong in \(M_{E_0}\), contradicting the anti-large cardinal assumption of the theorem.
    Also \((\kappa^{++})^{M_{E_0}} < (\kappa^{++})^K\), since otherwise \(E_0\) witnesses that \(\kappa\) is a \(2\)-strong cardinal in \(K\). 
    
    For each $\alpha<\gamma$, the measure $E_0(\alpha)$ is a subset of $U_\alpha$, where $U_\alpha$ is the $V$-ultrafilter derived from $j_U$ and $k(\alpha)$. In particular, $U_\alpha\leq_{RK} U$ via some function $f_\alpha:\kappa\to\kappa$.
    Since $2^\kappa=\kappa^+$ in ${K}$ and since $U$ is a $P_{\kappa^{++}}$-point, there is a set $B_\alpha\in U$ such that $f_\alpha[B_\alpha]\subseteq^* X$ for all $X\in E_0(\alpha)$: 
    let \(B_\alpha\in U\) be a \(\subseteq^*\)-lower bound of \(\{f_\alpha^{-1}[X] : X\in E_0(\alpha)\}\).
    Since $\gamma<\kappa^{++}$ and again since $U$ is a $P_{\kappa^{++}}$-point, we can find a single $B\in U$ such that $B\subseteq^* B_\alpha$ for all $\alpha<\gamma$. Note also that $E_0(\alpha)=\{X\in P^{K}(\kappa)\mid f_\alpha[B_\alpha]\subseteq^* X\}$. Since $j_U\restriction K$ is an iteration of $K$ with critical point $\kappa$, $P^K(\kappa)=P^{(K)^{M_U}}(\kappa)$. Using the fact that $f_\alpha[B_\alpha]\in M_U$ we have that $E_0(\alpha)\in M_U$.   
    
    Let $U'$ be the filter on \(\kappa\) that is \(\subseteq^*\)-generated by $B$. Then $U'\in M_U$. Let us claim that $E_0$ can be reconstructed in $M_U$ from \(U'\), which will lead to a contradiction (since it will imply that $E_0\in M_{E_0}$).
    \begin{claim}
    For each $\alpha<\gamma$, $E_0\restriction \alpha\in M_U$    
    \end{claim}
    \begin{proof}
        As we already noticed, $E_0(\alpha)\in M_U$. By Lemma \ref{lma:factor_crit}, applied to $j=j_{E_0}$, we conclude that $E_0\restriction \alpha$ is the extender of length \(\alpha\) derived from $j_{E_0(\alpha)}\restriction P^K(\kappa)$, which belongs to \(M_U\). 
    \end{proof}
    \begin{claim}
        $E_0\in M_U$.
    \end{claim}
    \begin{proof}
    We will prove that there is a formula \(\varphi(x_0,x_1,x_2,x_3)\) in the language of set theory such that for any \(\alpha < \gamma\), $E_0\restriction \alpha$ the unique \(F\in M_U\) such that \(M_U\vDash \varphi(F,f_\kappa,U',\alpha)\). Then \(\{E_0\restriction \alpha : \alpha < \gamma\}\in M_U\), which proves the claim.
    
    To be precise, \(\varphi(F,f_\kappa,U',\alpha)\) states that
    $F$ is a $K$-extender of length $\alpha$, $(2^\kappa)^{+{K}_F}\geq\alpha$,
    and there is a family of functions \(\langle g_a: a\in [\alpha]^{<\omega}\rangle\) such that:
    \begin{enumerate}
        \item Each $F_a\subseteq (g_a)_*(U')$.
        \item For each $a\subseteq b$, $\pi_{a,b}\circ g_b=g_a \mod U'$, where $\pi_{a,b}$ is the usual map from $\kappa^{|b|}$ onto $\kappa^{|a|}$.
        \item  $g_\kappa=f_\kappa$.
    \end{enumerate}
    By condition $(1)$, $(g_{a})_*(U')\cap K=F_a$. Since $U'\subseteq U$, this ensures that the maps $k_a:{K}_{F_a}\to K^{M_U}$ defined by $k_a([h]_{F_a})=[h\circ g_a]_U$ are well-defined and $j_U\restriction {K}=k_a\circ j_{F_a}$. Condition $(2)$ ensures that whenever $a\subseteq b$, $k_a=k_b\circ k_{a,b}$, where $k_{a,b}:{K}_{F_a}\to {K}_{F_b}$ is the usual factor map defined by $k_{a,b}([h]_{F_a})=[h\circ \pi_{b,a}]_{F_b}$. Indeed,
    $$k_a([h]_{F_a})=[h\circ g_a]_U=[h\circ \pi_{a,b}\circ g_b]_U=k_b([h\circ \pi_{a,b}]_{F_b})=k_b(k_{a,b}([h]_{F_a}).$$
    By the universal property of direct limits, the extender embedding $j_F:{K}\to{K}_F$ factors into $j_U\restriction {K}$; i.e., $j_U\restriction {K}=k\circ j_F$, where $k$ is the direct limit embedding of the $k_a$'s.
    
    Clearly, $E_0\restriction\alpha$ satisfies the above. For uniqueness, if $F$ satisfies the above then by requirement (3) that $f_\kappa=g_\kappa$, we have $F(\kappa)=E_0(\kappa)$ and $k(\kappa)=\kappa$. We claim that the critical point of $k$ is at least  $(2^\kappa)^{+{K}_F}$. To see this we simply note that $$P(\kappa)\cap {K}_F=P(\kappa)\cap {K}=P(\kappa)\cap j_U({K})$$ 
    and since $\crit(k)>\kappa$, for every $X\subseteq \kappa$, $k(X)=X$. It follows that for every $Y\subseteq P(\kappa)$, $Y\in {K}_F$, $k(Y)=Y$. It follows that every ordinal $\beta<(2^\kappa)^{+{K}_F}$, $k(\beta)=\beta$. 
    
    Finally note that $(2^\kappa)^{+{K}_F}=\crit(k)\geq \alpha$. Hence for every $a\in [\alpha]^{<\omega}$, $F(a)$ is the ultrafilter derived from $j_U$ and $a$, so \(F(a) = E_0(a)\), and hence \(F = E_0\restriction \alpha\). 
    \end{proof}
    Working in $M_U$, we appeal to the maximality of \(K\) \cite[Thm. 8.6]{SteelBook}. Since \(E_0\in M_U\) and \(E_0\) coheres the extender sequence of \(K^{M_U}\), \(E_0\in K^{M_U}\). But \(E_0\) is the first extender applied in the normal iteration leading to \(K^{M_U}\), so this is a contradiction.
\end{proof}
\subsection{Preserving Measurability with Mathias Forcing}\label{Sec: Math}
\begin{theorem}\label{thm: continuity points}
    Suppose $\kappa$ is measurable, the core model ${K}$ exists, and $U\in {K}$ is a normal measure on $\kappa$. Assume that there is a pseudo-intersection \(A\) of \(U\) such that \(A\cap \text{Lim}(A)\) is unbounded. Then in ${K}$, $\kappa$ carries a $\mu$-measure.
\end{theorem}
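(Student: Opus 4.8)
The plan is to reduce the hypothesis to the statement that $U$ has a \emph{club} pseudo-intersection, transfer this to the core model via an arbitrary normal measure of $V$, and then reconstruct $U$ inside the relevant ultrapower, in the spirit of the proof of Theorem \ref{Thm: p-pointtostrong} but only for the measure $U$ rather than a full extender. First I would observe that the ``continuity points'' in the name are those of the increasing enumeration $f_A\colon\kappa\to\kappa$ of $A$: a limit ordinal $\beta$ is a continuity point of $f_A$ exactly when $f_A(\beta)=\sup_{\gamma<\beta}f_A(\gamma)$, i.e.\ when $f_A(\beta)\in A\cap\operatorname{Lim}(A)$, so the set $C$ of continuity points of $f_A$ equals $f_A^{-1}[A\cap\operatorname{Lim}(A)]$ and is closed; since $A\cap\operatorname{Lim}(A)$ is unbounded, $C$ is a club and $C^*:=f_A[C]=A\cap\operatorname{Lim}(A)$ is a club contained in $A$. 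Thus $C^*$ is a club which is a pseudo-intersection of $U$, and we may assume $A=C^*$ is a club.

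Next, fix any normal measure $D$ on $\kappa$ in $V$ (using that $\kappa$ is measurable). Since $C^*$ is a club, $C^*\in D$, so $\kappa\in j_D(C^*)$; and for $X\in U$, $C^*\subseteq^* X$ gives $j_D(C^*)\setminus\alpha\subseteq j_D(X)$ for some $\alpha<\kappa$, whence $\kappa\in j_D(X)$. Therefore $U\subseteq D\cap K$, and since both are ultrafilters on $P(\kappa)^K$ we get $U=D\cap K$; in particular $U$ extends to a measure of $V$. By Schindler's theorem (as invoked in the proof of Theorem \ref{Thm: p-pointtostrong}), $i:=j_D\restriction K\colon K\to K^{M_D}=j_D(K)$ is the unique normal iteration of $K$; let $E_0$ be its first extender. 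Then $\crit(E_0)=\kappa$ and the measure part of $E_0$ is $\{X\in P(\kappa)^K:\kappa\in i(X)\}=\{X\in P(\kappa)^K:\kappa\in j_D(X)\}=D\cap K=U$, so $U$ sits on the extender sequence of $K$ as the $\kappa$-slice of $E_0$.

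Now I would reconstruct $U$ inside $M_D$. Since a normal iteration applies no extender with critical point $>\kappa$ after $E_0$, $P(\kappa)^K=P(\kappa)^{K^{M_D}}$; and since $C^*\subseteq\kappa$, $C^*=j_D(C^*)\cap\kappa\in M_D$. As $C^*$ is a pseudo-intersection of the ultrafilter $U$ on $P(\kappa)^K$,
\[U=\{X\in P(\kappa)^{K^{M_D}}:C^*\subseteq^* X\},\]
which is definable in $M_D$; hence $U\in M_D$. Let $k\colon\operatorname{Ult}(K,U)\to K^{M_D}$ be the factor embedding with $k\circ j^K_U=i$. A direct computation shows $\crit(k)\ge(\kappa^+)^K$: every $\gamma<(\kappa^+)^K$ is of the form $j_D(h)(\kappa)$ for some $h\in K$ (collapse $\gamma$ to $\kappa$ in $K$ and take order types of initial segments), so $\operatorname{ran}(k)$ contains every ordinal below $(\kappa^+)^K$ (cf.\ Lemma \ref{lma:factor_crit}); consequently $K$, $\operatorname{Ult}(K,U)$ and $K^{M_D}$ agree below $(\kappa^+)^K>\operatorname{rank}(U)$. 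Finally, $U$, as computed in $M_D$, is a $\kappa$-complete amenable $K^{M_D}$-ultrafilter cohering the extender sequence of $K^{M_D}$, and its iterated ultrapowers are well-founded (they factor into iterations arising from the $V$-measure $D$); so by the maximality of $K$ \cite[Thm. 8.6]{SteelBook} applied inside $M_D$, $U\in K^{M_D}$. Since $\operatorname{rank}(U)<\crit(k)$ and $\operatorname{Ult}(K,U)$ agrees with $K^{M_D}$ below $\crit(k)$, we conclude $U\in\operatorname{Ult}(K,U)$. Because $U$ is normal over $K$, $\{X\in P(\kappa)^K:\kappa\in j^K_U(X)\}=U\in\operatorname{Ult}(K,U)$, i.e.\ $U$ is a $\mu$-measure in $K$.

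The main obstacle is the last part: verifying, inside $M_D$, the hypotheses of the maximality theorem for $U$ — amenability to and coherence with the extender sequence of $K^{M_D}$, and well-foundedness of the iterated ultrapowers of $K^{M_D}$ by $U$. This is exactly where the argument parallels the reconstruction of $E_0$ in Theorem \ref{Thm: p-pointtostrong}; there one must in addition recover the generators of $E_0$ from a uniform pseudo-intersection supplied by the $P_{\kappa^{++}}$-point, whereas here the single club pseudo-intersection $C^*$ recovers only the measure $U$, which already suffices to produce a $\mu$-measure.
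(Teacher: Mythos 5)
Your opening reduction is where the proof breaks. The set $A\cap\text{Lim}(A)$ is unbounded by hypothesis but there is no reason for it to be closed: a limit of points of $A\cap\text{Lim}(A)$ lies in $\text{Lim}(A)$ but need not lie in $A$, so $C^*$ is not a club and neither is the set of continuity points of $f_A$. Worse, a normal measure $U$ of $K$ on a measurable $\kappa$ typically has \emph{no} club pseudo-intersection at all: $U$ concentrates on the $K$-inaccessibles (since $\kappa$ is inaccessible in $\Ult(K,U)$), whereas any club must contain its own limit points, stationarily many of which are singular. So ``we may assume $A$ is a club'' is not a harmless normalization but an impossible one, and with it goes your only justification for $A\in D$, hence for $\kappa\in j_D(A)$, hence for $U=D\cap K$ and for $j_D\restriction K$ factoring through $\Ult(K,U)$ at the first stage. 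The stage-$0$ analysis therefore has no footing.

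There is also a structural reason the approach cannot be repaired at stage $0$. The only genuine use you make of the hypothesis is the (failed) club reduction; everything afterwards---$U\in M_D$ via the pseudo-intersection, maximality in $M_D$, agreement below $\crit(k)$---would go through from a bare pseudo-intersection lying in some normal measure. But the remark following the theorem in the paper exhibits exactly this configuration with no $\mu$-measurable in $K$: after Radin forcing with a repeat point, a ground-model normal measure is diagonalized by the successor points of the Radin club (a set $A$ with $A\cap\text{Lim}(A)=\emptyset$). Your chain of implications, run in that model, would produce $U\in\Ult(K,U)$, which is false; so at least one link (in particular the appeal to maximality for $U$ over $K^{M_D}$, whose hypotheses you do not verify) cannot be carried out at stage $0$. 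The paper's proof uses the unboundedness of $A\cap\text{Lim}(A)$ quite differently: it guarantees a point $\alpha^*\in j_W(A)\cap\text{Lim}(j_W(A))$ \emph{above} $\kappa$, which is shown to equal $i_{0,\nu}(\kappa)$ for a \emph{limit} stage $\nu$ of the iteration $j_W\restriction K$ at which images of $U$ have been applied cofinally often; only then is $i_{0,\nu}(U)$ recoverable in $M_W$ as the tail filter of $j_W(A)\cap\alpha^*$, and the contradiction is that this image measure lands in its own ultrapower $K_{\nu+1}$. Your proposal has no analogue of this limit-stage mechanism, which is the heart of the theorem.
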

\begin{proof}
Let $W\in V$ be a $\kappa$-complete ultrafilter over $\kappa$. 
\begin{claim}
    Let $\alpha\in j_W(A)\setminus\kappa$ and let $W_\alpha$ be the $V$-ultrafilter derived from $j_W$ and $\alpha$, then $W_\alpha\cap {K}=U$.
\end{claim}
\begin{proof}
    It suffices to prove that $U\subseteq W_\alpha$. For any $X\in U$, by assumption there is $\xi<\kappa$ such that $A\setminus \xi\subseteq X$. Hence $j_W(A)\setminus \xi\subseteq j_W(X)$. Since $\alpha\in j_W(A)\setminus \kappa$, it follows that $\alpha \in j_W(X)$ and thus $X\in W_\alpha$.
\end{proof}
By Schindler \cite{schindler_2006} (and Steel) again, $j_W\restriction {K}$ is an iteration of ${K}$ by its measures/extenders. Let \(\langle K_\alpha, E_\alpha, i_{\alpha,\beta} : \alpha < \beta \leq \theta\rangle\) be the normal iteration of ${K}$ such that $i_{0,\theta} = j_W\restriction{K}$; thus \(E_\alpha\) is the extender used at stage \(\alpha\) and \(i_{\alpha,\beta} : K_\alpha\to K_\beta\) is the canonical embedding. 
\begin{claim} Let $\alpha\in j_W(A)\setminus\kappa$.
    \begin{enumerate}
        \item  $\alpha$ is an image of $\kappa$ under the iteration. Namely, $\alpha=i_{0,\gamma}(\kappa)$ for some $\gamma<\theta$.
        \item Suppose that $\gamma'\geq \gamma$ is the first stage of the iteration where we apply an extender $E_{\gamma'}$ with critical point at least $\alpha$. Then $E_{\gamma'}(\alpha)=i_{0,\gamma'}(U)$.
    \end{enumerate}
\end{claim}
\begin{proof}
    For $(1)$, first note that $\alpha$ is a sky point; namely, that for every club $C\in{K}$ on $\kappa$, $\alpha\in j_W(C)$. This is true since $U$ is a normal measure. Now it is not hard to see that for any $\rho<\theta$, and every $i_{0,\rho}(\kappa)<\nu<i_{0,\rho+1}(\kappa)$, there is a function $f:\kappa\to\kappa$ in ${K}$ such that $\nu\leq i_{\rho+1}(f)(i_{0,\rho}(\kappa))$. Hence $\alpha$ must be of the form $i_{0,\gamma}(\kappa)$ for some $\gamma<\theta$.

    For $(2)$, we first note that $i_{0,\gamma'}[U]\cup F_\alpha\subseteq E_{\gamma'}(\alpha)$, where $F_\alpha$ is the tail filter on $\alpha$. To see this, let $X\in U$, then $\alpha\in j_W(X)$ hence $\alpha\in i_{\gamma'+1,\theta}(i_{\gamma',\gamma'+1}(i_{0,\gamma'}(X)))$. By the normality of the iteration, $\alpha\in i_{\gamma',\gamma'+1}(i_{0,\gamma'}(X))$ which implies that $i_{0,\gamma'}(X)\in E_{\gamma'}(\alpha)$. To see that $i_{0,\gamma'}(U)=E_{\gamma'}(\alpha)$ it suffices to prove that $i_{0,\gamma'}[U]\cup F_\alpha$ generates $i_{0,\gamma'}(U)$. This follows from the normality of $U$ and since every set in $i_{0,\gamma'}(U)$, is of the form $i_{0,\gamma'}(f)(\vec{\xi})$ for some $f:[\kappa]^{\vec{\xi}}\to U$, $f\in{K}$ and $\vec{\xi}\in [\alpha]^{<\omega}$. (See \cite[Lemma 3.11]{UltrafilterTheory}.)
\end{proof}
Now we are ready to prove that \(\kappa\) carries a  $\mu$-measure in ${K}$. Suppose not, towards a contradiction. Pick any point $\alpha^*\in j_W(A)\cap \text{Lim}(j_W(A))$ above $\kappa$. Then by the claim, fix \(\nu < \theta\) such that \(\alpha = i_{0,\nu}(\kappa)\) and stages $\{\nu_i\mid i\leq\eta\}$ of the iteration such that \(\nu = \nu_\eta = \sup_{i<\eta}\nu_i\), and for each $i\leq\eta$, at stage $\nu_i$ of the iteration, we apply an extender \(E_{\nu_i}\) whose derived normal measure is $i_{0,\nu_i}(U)$.

Since \(\kappa\) is not \(\mu\)-measurable in \(K\),
\(i_{0,\nu_i}(\kappa)\) is not \(\mu\)-measurable in \(K_{\nu_i}\), and so the extender \(E_{\nu_i}\) is actually equivalent to its derived normal measure \(i_{0,\nu_i}(U)\); otherwise, by the initial segment condition, the derived normal measure would belong to \(\Ult(K_{\nu_i},E_{\nu_i})\),
which implies \(E_{\nu_i}\) is a \(\mu\)-measure.

Now the measure  $i_{0,\nu_\eta}(U)$ is definable in \(M_W\) as the set of all $X\subseteq\alpha^*$ that contain a tail of $j_W(A)\cap \alpha^*$. Applying the maximality of the core model (for example, \cite[Theorem 8.14 (2)]{SteelBook}) in \(M_W\), $i_{0,\nu_{\eta}}(U)\in {K}^{M_W}$. Since the iteration is normal, we conclude that $i_{0,\nu_{\eta}}(U)\in i_{0,\nu_{\eta}+1}({K})$, which is itself the ultrapower of $i_{0,\nu_\eta}(K)$ by $i_{0,\nu_\eta}(U)$. Contradiction. 
\end{proof}
\begin{remark}
    Note that the assumption that $A$ contains unboundedly many closure points is essential. Indeed, after Radin forcing with a repeat point, $\kappa$ stays measurable and there is a ground model normal measure which is diagonalized by the successor points of the Radin club.
\end{remark}
Let us use Theorem \ref{thm: continuity points} to provide a lower bound on the preservation of measurability after  the generalized Mathias forcing. This is related to the attempt to obtain a small ultrafilter number at a measurable cardinal using this method.
\begin{definition}\label{Def:Mathias forcing}
    Suppose $\kappa^{<\kappa}=\kappa$. Given a $\kappa$-complete filter $F$ over a measurable cardinal $\kappa\geq\omega$, let $\mathbb{M}_F$ be the forcing notion whose conditions are pairs $(a,A)\in [\kappa]^{<\kappa}\times F$. The order is defined by $(a,A)\leq (b,B)$ if $b\subseteq a$,  $A\subseteq B$, and $a\setminus b\subseteq B$.
\end{definition}
This forcing is $\kappa$-closed and $\kappa$-centered. This is an unorthodox definition, but it is forcing equivalent to the standard one where in the definition of $(a,A)\leq (b,B)$  we replace $b\subseteq a$ with $b\sqsubseteq a$. Indeed, consider the set of conditions $\mathbb{M}^*_U=\{(a,A)\mid \min(A)>\sup(a)\}$. Clearly $\mathbb{M}^*_U$ is dense in $\mathbb{M}_U$ and if $(a,A)\leq (b,B)\in\mathbb{M}^*_U$, then $\min(a\setminus b)>\sup(b)$, hence $b\sqsubseteq a$. The reason for presenting the forcing this way is the following simple lemma:
\begin{lemma}\label{Lemma: Proj}
    If $U\leq_{RK}W$ then $\mathbb{M}_W$ projects onto $\mathbb{M}_U$.
\end{lemma}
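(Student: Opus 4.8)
The plan is to write down an explicit projection $\pi\colon\mathbb{M}_W\to\mathbb{M}_U$ built from a Rudin--Keisler witness. Fix $f\colon\kappa\to\kappa$ with $A\in U\iff f^{-1}[A]\in W$. As a preliminary normalization I would arrange that $f$ is surjective: partition $\kappa$ into two sets of size $\kappa$; one of them, say $M$, lies in $W$, so $\kappa\setminus M$ is a $W$-null set of size $\kappa$, and since $\rng(f)\in U$ (hence $|\kappa\setminus\rng(f)|\leq\kappa$) we may redefine $f$ on $\kappa\setminus M$ so that its restriction there maps onto $\kappa\setminus\rng(f)$. Changing $f$ on a $W$-null set preserves the equation $f_*(W)=U$, so from now on $f$ is onto. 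Define
$$\pi(a,A)=(f[a],\,f[A]).$$
This lands in $\mathbb{M}_U$: if $A\in W$ then $f^{-1}[f[A]]\supseteq A$ lies in $W$, so $f[A]\in U$; and $\pi(\emptyset,\kappa)=(\emptyset,\kappa)$ because $f$ is onto. (Recall $\pi$ is a projection if it is order preserving, sends $1$ to $1$, and for every $p$ and every $q\leq\pi(p)$ there is $p'\leq p$ with $\pi(p')\leq q$.)

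Next I would check $\pi$ is order preserving. Suppose $(a_1,A_1)\leq(a_2,A_2)$, i.e.\ $a_2\subseteq a_1$, $A_1\subseteq A_2$, and $a_1\setminus a_2\subseteq A_2$. Then $f[a_2]\subseteq f[a_1]$ and $f[A_1]\subseteq f[A_2]$ are immediate; and if $\gamma\in f[a_1]\setminus f[a_2]$, writing $\gamma=f(\alpha)$ with $\alpha\in a_1$, we must have $\alpha\notin a_2$ (else $\gamma\in f[a_2]$), hence $\alpha\in a_1\setminus a_2\subseteq A_2$ and $\gamma\in f[A_2]$. Thus $f[a_1]\setminus f[a_2]\subseteq f[A_2]$, i.e.\ $\pi(a_1,A_1)\leq\pi(a_2,A_2)$.

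The only step with content is the lifting property. Given $(a,A)\in\mathbb{M}_W$ and $(b,B)\leq\pi(a,A)=(f[a],f[A])$ --- so $f[a]\subseteq b$, $B\subseteq f[A]$, and $b\setminus f[a]\subseteq f[A]$ --- I would build a refinement of $(a,A)$ mapping below $(b,B)$ as follows. For each $\beta\in b\setminus f[a]$ pick $\alpha_\beta\in A$ with $f(\alpha_\beta)=\beta$ (possible since $\beta\in f[A]$); set $c=\{\alpha_\beta:\beta\in b\setminus f[a]\}$, $a^*=a\cup c$, and $A^*=A\cap f^{-1}[B]$. Then $|a^*|<\kappa$; since $B\in U$ we get $f^{-1}[B]\in W$, so $A^*\in W$; and $a\subseteq a^*$, $A^*\subseteq A$, $a^*\setminus a\subseteq c\subseteq A$, so $(a^*,A^*)\leq(a,A)$ in $\mathbb{M}_W$. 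Finally $f[a^*]=f[a]\cup f[c]=f[a]\cup(b\setminus f[a])=b$ and $f[A^*]\subseteq f[f^{-1}[B]]\subseteq B$, so $\pi(a^*,A^*)=(b,f[A^*])\leq(b,B)$, as required. Since all three verifications are direct computations once $f$ is surjective, I do not expect a genuine obstacle; the only points to be careful about are the surjectivity normalization (needed so that the trivial condition is sent to the trivial condition and so that arbitrary stems in $\mathbb{M}_U$ actually arise as $f$-images of stems in $\mathbb{M}_W$) and keeping straight the three clauses of the Mathias order under $f$-images and $f$-preimages.
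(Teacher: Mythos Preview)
Your proof is correct and follows essentially the same approach as the paper: define the projection by $\pi(a,A)=(f[a],f[A])$ for a surjective RK-witness $f$, verify order preservation, and lift by choosing preimages in $A$ for the new stem elements and intersecting with $f^{-1}[B]$. One small slip in your surjectivity normalization: after redefining $f$ on the $W$-null piece $\kappa\setminus M$ to hit $\kappa\setminus\rng(f)$, the new range is only $f[M]\cup(\kappa\setminus\rng(f))$, which may miss points of $f[\kappa\setminus M]\setminus f[M]$; the fix is simply to redefine $f$ on $\kappa\setminus M$ to surject onto all of $\kappa$.
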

\begin{proof}
    Let $f:\kappa\to\kappa$ witness that $U\leq_{RK} W$, we may assume that $f$ is onto. Define $\phi:\mathbb{M}_W\to\mathbb{M}_U$ by $\phi((a,A))=(f''a,f''A)$ and we claim that $\phi$ is a  projection. If $(a,A)\leq (b,B)$, then $f''b\subseteq f''a$ and $f''A\subseteq f'' B$. Also if $\nu\in f''a\setminus f''b$, the $\nu=f(x)$ for some $x\in a\setminus b\subseteq B$, hence $\nu=f(x)\in f''B$. So $(f''a,f''A)\leq (f''b,f''B)$. Suppose that $(x,X)\leq (f''a,f''A)$. This means that $x\setminus f''a\subseteq f''A$. Hence there is $a'\subseteq A$ such that $f''[a\cup a']=x$. Also since $X\in U$, $f^{-1}[X]\in W$. Consider the condition $p=(a\cup a', A\cap f^{-1}[X])$. Then $p\leq (a,A)$ and $\phi(p)\leq (x,X)$. Hence $\phi$ is a projection.
\end{proof}
\begin{proposition}\label{Prop: Properties of mathias generic set}
    Suppose $G\subseteq \mathbb M_U$ is $V$-generic and \[A_G=\bigcup\{a\mid \exists A, \ (a,A)\in G\}.\] 
    \begin{enumerate}
        \item For every $A\in U$, $A_G\subseteq^* A$.
        \item If $\Cub_\kappa\subseteq U$, then $A_G\cap Lim(A_G)$ is unbounded in $\kappa$.
    \end{enumerate}
\end{proposition}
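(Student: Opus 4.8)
The plan is to prove both parts by density arguments over $\mathbb{M}_U$, whose conditions have working parts of size $<\kappa$. For (1), fix $A\in U$ and consider $D_A=\{(b,B)\in\mathbb{M}_U\mid B\subseteq A\}$. This set is dense, since for any $(c,C)$ the condition $(c,C\cap A)$ refines $(c,C)$ and lies in $D_A$ (using that $U$ is a filter). Pick $(b,B)\in G\cap D_A$; I claim $A_G\setminus(\sup(b)+1)\subseteq A$, which gives $A_G\subseteq^* A$ since $\sup(b)<\kappa$. Indeed, for any $(a,A')\in G$, directedness of $G$ yields $(c,C)\in G$ with $(c,C)\le (a,A')$ and $(c,C)\le (b,B)$; then $a\subseteq c$ and $c\setminus b\subseteq B\subseteq A$, so $a\setminus b\subseteq A$. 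Taking the union over all $(a,A')\in G$ gives $A_G\setminus b\subseteq A$, and every element of $A_G$ above $\sup(b)$ lies in $A_G\setminus b$.

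For (2), assume $\Cub_\kappa\subseteq U$. The first thing to note is that $U$ is uniform (a final segment of $\kappa$ is a club, hence in $U$, so $U$ contains no bounded set), so each $A\in U$ is unbounded in $\kappa$ and the set $\mathrm{Lim}(A)$ of limit points of $A$ below $\kappa$ is a club; hence $A\cap\mathrm{Lim}(A)\in U$, and is in particular unbounded in $\kappa$. Fix $\beta<\kappa$ and set
\[
E_\beta=\{(b,B)\in\mathbb{M}_U\mid \exists\gamma\in b\ (\gamma>\beta\text{ and }\gamma\text{ is a limit point of }b)\}.
\]
To see $E_\beta$ is dense, take any $(a,A)$, refine it into $\mathbb{M}^*_U$ so that $\min(A)>\sup(a)$, pick $\gamma\in A\cap\mathrm{Lim}(A)$ with $\gamma>\beta$, and choose $a'\subseteq A\cap\gamma$ cofinal in $\gamma$ of order type $\cf(\gamma)<\kappa$. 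Put $b=a\cup a'\cup\{\gamma\}$ and $B=A\setminus(\gamma+1)$: then $|b|<\kappa$, $a\subseteq b$, $B\subseteq A$, and $b\setminus a\subseteq a'\cup\{\gamma\}\subseteq A$, so $(b,B)\le (a,A)$, while $\gamma\in b$ is a limit point of $b$ with $\gamma>\beta$, so $(b,B)\in E_\beta$.

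Finally, for each $\beta<\kappa$ choose $(b,B)\in G\cap E_\beta$ with witness $\gamma_\beta>\beta$. Every condition of $G$ below $(b,B)$ contains $b$ in its working part, so $b\subseteq A_G$; thus $\gamma_\beta\in A_G$, and since $a'\subseteq b$ is cofinal in $\gamma_\beta$ we get $\gamma_\beta\in\mathrm{Lim}(A_G)$. Hence $A_G\cap\mathrm{Lim}(A_G)$ is unbounded in $\kappa$. Everything in part (1) and all the condition-checking for $E_\beta$ is routine; the one point with content is that $\mathrm{Lim}(A)$ is a club for $A\in U$, which is exactly where the hypothesis $\Cub_\kappa\subseteq U$ (as opposed to mere uniformity of $U$) is used, to guarantee that $A\cap\mathrm{Lim}(A)$ remains unbounded. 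I anticipate no serious obstacle beyond this observation.
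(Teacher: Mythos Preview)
Your proof is correct and follows essentially the same density approach as the paper: for (1) both of you extend any condition $(x,X)$ to $(x,X\cap A)$, and for (2) both of you pass to a limit point of the measure-one part and enlarge the working part to witness continuity there (the paper throws in all of $X\cap(\alpha+1)$ while you add only a cofinal subset of order type $\cf(\gamma)$, which is a harmless variation). One small wording issue: in your final paragraph you refer to ``$a'\subseteq b$'' for the condition $(b,B)\in G\cap E_\beta$, but that $(b,B)$ need not be the specific condition built in your density argument; you should instead just invoke the definition of $E_\beta$, which already guarantees that $\gamma_\beta$ is a limit point of $b\subseteq A_G$.
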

\begin{proof}
    The first item is clear, since every condition $(x,X)$ can be extended to a condition $(x,X\cap A)$ which forces that $\dot{A}_G\setminus\check{x}\subseteq \check{A}$. For the second item, Let $(x,X)$ be an condition, and $\delta<\kappa$, we will find a stronger condition which forces some continuity into $A_G$. Consider $Lim(X)\in \Cub_\kappa$. Then $X\cap Lim(X)\setminus \sup(x)\in U$. Let $\alpha>\delta$ be any point in $X\cap Lim(X)$, then $(x\cup X\cap \alpha+1,X\setminus \alpha+1)$ forces that $\alpha$ is a continuity point of $\dot{A}_G$ above $\delta$.
\end{proof}
\begin{corollary}
    Suppose that $V[G]$ is a generic extension where $\kappa$ is measurable, and there is $A\in V[G]$, a $V$-generic set for $\mathbb{M}_U$, where $U$ is a $\kappa$-complete ultrafilter in $V$. Then there is an inner model with a $\mu$-measurable cardinal.
    \end{corollary}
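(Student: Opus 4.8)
The plan is to reduce everything to Theorem~\ref{thm: continuity points}. Assume toward a contradiction that there is no inner model with a $\mu$-measurable cardinal. Then the core model $K$ exists (this is far below the threshold at which the core model construction breaks down), and, since $K$ is generically absolute for set forcing, $K^{V[G]}=K^V=:K$.

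The first issue to clear is that $U$ need neither be normal nor extend the club filter, whereas Theorem~\ref{thm: continuity points} demands a normal measure. I would pass to a normal measure $D\leq_{RK}U$ on $\kappa$ in $V$, say $D=g_*(U)$ for $g:\kappa\to\kappa$ with $[g]_U=\kappa$ (equivalently, $D=\{X\subseteq\kappa:\kappa\in j_U(X)\}$). By Lemma~\ref{Lemma: Proj}, $\mathbb{M}_U$ projects onto $\mathbb{M}_D$ via $(a,X)\mapsto(g''a,g''X)$, so the $V$-generic set $A$ for $\mathbb{M}_U$ living in $V[G]$ yields, in $V[G]$, a $V$-generic set $A':=g''A$ for $\mathbb{M}_D$. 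By Proposition~\ref{Prop: Properties of mathias generic set}(1), $A'\subseteq^*Y$ for every $Y\in D$; and since $D$ is normal it contains $\Cub_\kappa$, so Proposition~\ref{Prop: Properties of mathias generic set}(2) gives that $A'\cap\mathrm{Lim}(A')$ is unbounded in $\kappa$.

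Next I would locate the normal measure of $K$ required by Theorem~\ref{thm: continuity points}. By Schindler's theorem applied in $V$, $j_D\restriction K$ is a normal iteration of $K$ whose first critical point is $\kappa$; its first extender $E_0$ lies on the extender sequence of $K$, so the normal measure $D_0$ derived from $E_0$ at $\kappa$ belongs to $K$ and is a normal measure there. A short computation — using that the tail of the iteration $j_D\restriction K = k\circ j_{E_0}$ has $\crit(k)>\kappa$, so $k$ fixes subsets of $\kappa$ — shows $D_0=D\cap K$. Hence $A'$, being a pseudo-intersection of $D$, is a pseudo-intersection of the normal measure $D_0\in K$.

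Finally I would apply Theorem~\ref{thm: continuity points} inside $V[G]$: there $\kappa$ is measurable, $K$ exists, $D_0\in K$ is a normal measure on $\kappa$, and $A'$ is a pseudo-intersection of $D_0$ with $A'\cap\mathrm{Lim}(A')$ unbounded. The theorem then yields that $\kappa$ carries a $\mu$-measure in $K$, so $K$ is an inner model with a $\mu$-measurable cardinal, contradicting our assumption. The step I expect to require the most care is the passage to the forcing extension: in $V[G]$ the $V$-ultrafilters $U$ and $D$ are no longer ultrafilters (the Mathias set is a fresh subset of $\kappa$), so the hypotheses of Theorem~\ref{thm: continuity points} can be checked only through the core model — via the generic absoluteness $K^{V[G]}=K^V$ and the identification $D\cap K=D_0\in K$. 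Once those are in place, Theorem~\ref{thm: continuity points} and Proposition~\ref{Prop: Properties of mathias generic set} do the real work.
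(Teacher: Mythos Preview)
Your proposal is correct and follows the same route as the paper: project to a normal measure via Lemma~\ref{Lemma: Proj}, invoke Proposition~\ref{Prop: Properties of mathias generic set} to see that the projected Mathias set diagonalizes the normal measure and has unboundedly many continuity points, and then apply Theorem~\ref{thm: continuity points}. The paper's proof is three sentences and simply asserts that $U\cap K$ is a $K$-normal measure; you supply the missing justification (via Schindler's theorem and the first extender of the iteration, or equivalently via closure of $K$ under countably complete ultrafilters) and make explicit the contradiction setup and the generic absoluteness $K^{V[G]}=K^V$, but the argument is the same.
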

\begin{proof}
    By Lemma \ref{Lemma: Proj}, we may assume that $A$ is $V$-generic for $\mathbb{M}_U$ for a normal ultrafilter $U$ in $V$. By Proposition \ref{Prop: Properties of mathias generic set}, $A$ diagonalizes the $K$-normal measure $U\cap K$ and has unboundedly many continuity points. Hence we may apply Theorem \ref{thm: continuity points}.
\end{proof}

\subsection{The Generalized Tower Number}\label{Section: Applications}
Our first application is to give a non-trivial lower bound on the statement ``$\kappa$ is measurable and $\mathfrak{t}_\kappa>\kappa^+$". 
\begin{definition}\label{def: tower and pseudo intersection}
    A family $\mathcal{A}\subseteq [\kappa]^\kappa$ has the $\kappa$-SIP (strong intersection property) if for every $\mathcal{B}\in [\mathcal{A}]^{<\kappa}$, $\bigcap\mathcal{B}$ has size $\kappa$. A pseudo-intersection for $\mathcal{A}$ is a set $X\in [\kappa]^\kappa$ such that for every $A\in\mathcal{A}$, $X\subseteq^* A$. A tower in $\kappa$ is a sequence $\mathcal A = \l A_i\mid i<\lambda\r\subseteq [\kappa]^\kappa$ such that if $i<j$ then $A_i\supseteq ^* A_j$ and $\mathcal A$ has no pseudo-intersection. The generalized pseudo-intersection and tower numbers are defined as follows:
    \begin{enumerate}
        \item $\mathfrak{p}_\kappa$ is the minimum cardinality of a set \(\mathcal  A\subseteq [\kappa]^{\kappa}\) that has the \(\kappa\)-SIP but has no pseudo-intersection.
        \item $\mathfrak{t}_\kappa$ is the minimum length of a tower in $\kappa$.
    \end{enumerate}
\end{definition}
It is known that $\kappa^+\leq \mathfrak{p}_\kappa\leq \mathfrak{t}_\kappa\leq \mathfrak{b}_\kappa$ (see \cite[Lemma 31]{BROOKETAYLOR201737}).
Note that starting with an indestructible supercompact cardinal \(\kappa\) and an appropriate bookkeeping argument, one can iterate Mathias forcing of length $\kappa^{++}$ with ${<}\kappa$-support to add a diagonalizing set to any $\kappa$-complete uniform filter on \(\kappa\) which is generated by $\kappa^+$-many sets. This forcing preserves the supercompactness of $\kappa$ and makes $\mathfrak{p}_\kappa=\mathfrak{t}_\kappa=\kappa^{++}$. 
In the other direction, if one wishes to obtain $\mathfrak{t}_\kappa\geq\kappa^{++}$ at a measurable cardinal \(\kappa\), one must violate GCH at a measurable, which already implies an inner model where $o(\kappa)=\kappa^{++}$. Let us improve this lower bound:  
\begin{theorem}
    Suppose $\kappa$ is measurable and that $\mathfrak{t}_\kappa>\kappa^+$. Then there is an inner model with a $\mu$-measurable cardinal. 
\end{theorem}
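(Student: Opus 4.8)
The strategy is to reduce to Theorem~\ref{thm: continuity points} by manufacturing, from a tower of length $>\kappa^+$, a pseudo-intersection of a $K$-normal measure that has unboundedly many closure points. Fix a normal measure $U\in K$ on $\kappa$ (it exists because $\kappa$ is measurable and $K$ exists; concretely take $W\in V$ a normal measure on $\kappa$ and set $U = W\cap K$, which is a $K$-normal measure by the usual derivation argument). The point is that $\mathfrak{ch}_K(U)\le (2^\kappa)^K = \kappa^+$ in $K$, so $U$ is generated over $K$ by some $\subseteq^*$-decreasing sequence of length $\kappa^+$; call it $\langle E_\alpha \mid \alpha < \kappa^+\rangle$ (one can thin a base of $U$ to a decreasing sequence since $U$ is a $P_{\kappa^+}$-point). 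Now invoke $\mathfrak{t}_\kappa > \kappa^+$: the family $\langle E_\alpha \mid \alpha<\kappa^+\rangle$ is a $\subseteq^*$-decreasing sequence of subsets of $\kappa$ of length $\kappa^+ < \mathfrak{t}_\kappa$, so it is \emph{not} a tower, i.e.\ it has a pseudo-intersection $A\in[\kappa]^\kappa$ in $V$. Since the $E_\alpha$ generate $U$ over $K$, the set $A$ is a pseudo-intersection of \emph{every} element of $U$, i.e.\ $A\subseteq^* X$ for all $X\in U$.

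Next I need $A\cap \mathrm{Lim}(A)$ to be unbounded in $\kappa$, which is not automatic for an arbitrary pseudo-intersection. The fix is to also diagonalize the club filter: since $U$ is normal, $\Cub_\kappa \subseteq U$, so one would like a pseudo-intersection that also $\subseteq^*$-refines a cofinal family of clubs. But $\mathfrak{d}_\kappa$ could be larger than $\kappa^+$, so I cannot simply throw in a cofinal family of clubs and appeal to $\mathfrak{t}_\kappa$. Instead, observe that it is enough to close $A$ off: replace $A$ by $A' = \mathrm{Lim}(A)\cup A$ — no wait, that need not be $\subseteq^*$ below the $E_\alpha$. The cleaner route: note $\mathrm{Lim}(A)$ is a club (or at least unbounded closed set modulo the observation that $A$ has order type $\kappa$), and $\mathrm{Lim}(A)\in \Cub_\kappa\subseteq U$, hence $A\subseteq^* \mathrm{Lim}(A)$ up to a bounded error; combining, $A\cap\mathrm{Lim}(A)$ differs from $A$ by a bounded set and is therefore unbounded of size $\kappa$. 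More carefully: since $A\subseteq^* \mathrm{Lim}(A)$, there is $\xi<\kappa$ with $A\setminus\xi\subseteq \mathrm{Lim}(A)$, so $A\setminus\xi \subseteq A\cap\mathrm{Lim}(A)$, which is therefore unbounded. This gives exactly the hypothesis of Theorem~\ref{thm: continuity points}: a pseudo-intersection $A$ of the $K$-normal measure $U$ with $A\cap\mathrm{Lim}(A)$ unbounded.

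Finally, apply Theorem~\ref{thm: continuity points} with this $U$ and $A$ to conclude that $K$ has a $\mu$-measure on $\kappa$, hence there is an inner model with a $\mu$-measurable cardinal. The main obstacle I anticipate is the second paragraph: justifying that a pseudo-intersection of $U$ automatically has unboundedly many closure points. The argument above hinges on $\mathrm{Lim}(A)\in U$, which requires $\mathrm{Lim}(A)$ to be a club — true because $A$ is unbounded in the regular cardinal $\kappa$ (so $\mathrm{Lim}(A)$ is closed and unbounded) — together with $\Cub_\kappa\subseteq U$ from normality; then $A\subseteq^*\mathrm{Lim}(A)$ comes for free because $A$ is a pseudo-intersection of \emph{all} of $U$, not just the generating sequence. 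A subtlety worth double-checking is that $U = W\cap K$ really is generated over $K$ by a $\subseteq^*$-decreasing $\kappa^+$-sequence: this uses that in $K$, $2^\kappa=\kappa^+$ and that $U$, being a normal measure in $K$, is a $P_{\kappa^+}$-point in the sense of $K$, so Lemma~4.23 of \cite{tomCohesive} (or rather its relativization to $K$) gives the decreasing base. Modulo these checks, the proof is a short assembly of the pieces already in place.
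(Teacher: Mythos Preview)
Your reduction to Theorem~\ref{thm: continuity points} has a genuine gap in the continuity-point step. You argue that $\mathrm{Lim}(A)\in\Cub_\kappa\subseteq U$ and hence $A\subseteq^*\mathrm{Lim}(A)$. But $U$ is a $K$-ultrafilter: it contains only sets in $P(\kappa)^K$. The normality of $U$ gives $\Cub_\kappa^K\subseteq U$, not $\Cub_\kappa^V\subseteq U$. Since $A\in V\setminus K$, the club $\mathrm{Lim}(A)$ is computed in $V$ and need not lie in $K$, so there is no reason for $\mathrm{Lim}(A)$ to belong to $U$, and the inference $A\subseteq^*\mathrm{Lim}(A)$ collapses. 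The Remark immediately following Theorem~\ref{thm: continuity points} shows the continuity hypothesis is not redundant: a pseudo-intersection of a $K$-normal measure can perfectly well miss all of its own limit points (e.g.\ the successor points of a Radin club). Attempts to repair this by passing to $\bar A$ or $\mathrm{Lim}(A)$ fail for the dual reason: these larger sets are no longer pseudo-intersections of $U$, since for $X\in U$ not closed one only gets $\bar A\subseteq^*\bar X$, not $\bar A\subseteq^* X$.

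The paper does not go through Theorem~\ref{thm: continuity points} at all. Instead it shows that any $V$-normal measure $W$ is a $\pi P_{\kappa^{++}}$-point when $\mathfrak t_\kappa>\kappa^+$ (by exactly the tower argument you gave), and then invokes Gitik's argument directly: a pseudo-intersection $A\in[\kappa]^\kappa$ of $W\cap K$ lies in $M_W$, so $W\cap K=\{X\in P(\kappa)^{K}:A\subseteq^* X\}$ is definable in $M_W$, and maximality of the core model finishes. No continuity condition on $A$ is ever needed. Your first paragraph already produces exactly this $A$; the fix is simply to abandon the detour through Theorem~\ref{thm: continuity points} and conclude via Gitik's argument as the paper does.
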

\begin{proof}
    We first sketch a proof that the existence of a \(\pi P_{\kappa^{++}}\)-point implies an inner model with a \(\mu\)-measurable.
    In Gitik's argument  to obtain a $\mu$-measurable from a $P_{\kappa^{++}}$-point $U$ (which appears in \cite{tomCohesive}), we needed to reconstruct $U\cap{K}$ in the ultrapower $M_U$, and this was done by finding a set $A\in U$ such that $A\subseteq^* X$ for all $X\in U\cap{K}$. The purpose of the set $A$ is to define a filter $F\in M_U$ which includes $U\cap K$. It follows that the assumption of $A$ being a member of $U$ can be replaced with $A$ being unbounded in $\kappa$. 
    Therefore the argument works assuming that $U$ is a $\pi P_{\kappa^{++}}$-point (Definition \ref{def:pi_plambda_point}). From this point on, 
    the argument is identical to Gitik's. 
    
    To conclude the theorem, we claim that if $\mathfrak{t}_\kappa>\kappa^+$ and $U$ is normal, then $U$ is a $\pi P_{\kappa^{++}}$-point. Otherwise, let $\l X_i\mid i<\kappa^+\r\subseteq U$ be a counterexample. Since $U$ is normal, we can find a $\subseteq^*$-decreasing sequence $\l Y_i\mid i<\kappa^+\r\subseteq U$ such that for each $i<\kappa^+$, $Y_i\subseteq^* X_i$. The sequence of $Y_i$'s has no pseudo-intersection, since any such pseudo-intersection would have also been one for the sequence $\l X_i\mid i<\kappa^{+}\r$. Hence we see that $\l Y_i\mid i<\kappa^+\r$ is a tower, contradicting $\mathfrak{t}_\kappa>\kappa^+$. 
\end{proof}
This is related to a question of Gitik and Ben-Neria \cite[Question 3.2]{OmerMoti} which asked a similar question regarding the splitting number.

\subsection{Filter games without GCH}\label{Sec: Filter}
The filter games of Holy-Schlicht, Nielsen-Welch and Foreman-Magidor-Zeman revolve around several filter games defined as follows:

Fix $\theta$ a regular large enough cardinal. A transitive set $M$ is called a \textit{$\kappa$-suitable model} if  $M\subseteq H(\kappa^+)$ satisfies $ZFC^-$ and is  closed under ${<}\kappa$-sequences.

The notion of a constraint function defined below is essentially a notational tool to allow us to define several families of filter games all at once. 
\begin{definition}
    A \textit{constraint function} is a function \(\mathcal C\) that assigns to each \(\kappa\)-suitable model \(M\) a set \(\mathcal C(M)\) of \(\kappa\)-complete uniform filters on \(\kappa\) such that for each \(F\in \mathcal C(M)\), \(F\cap M\) is an \(M\)-ultrafilter.
\end{definition}

We will consider the following constraint functions:
\begin{enumerate}
    \item $\text{Set}(M)$ is the collection of all filters $F$ such that $F\cap M$ is a $\kappa$-complete $M$-ultrafilter and $F$ is $\subseteq^*$-generated by a single set.
    \item $\text{NSet}(M)$ is the collection of all filters $F$ such that $F\cap M$ is an $M$-normal ultrafilter and $F$ is $\subseteq^*$-generated by a single set.
    \item $\text{Filter}(M)$ is the collection of all filters $F$ such that $F\cap M$ is a $\kappa$-complete $M$-ultrafilter.
\end{enumerate}

\begin{definition}[The filter game]
Let $\kappa$ be a regular cardinal and let $\mathcal{C}$ be a constraint function. The filter game $G_{\mathcal{C}}(\kappa,\gamma)$ is the two-player game of length \(\gamma\) defined as follows: 

At stage $i$ of the game, Player I plays first a $\kappa$-suitable model $M_i$ of size at most $\kappa\cdot |i|$, such that $\bigcup_{j<i}M_j\subseteq M_i$. Then Player II responds with a filter $F_i\in\mathcal{C}(M_i)$ which extends $\bigcup_{j<i}F_j$.

 The game is played for every stage $i<\gamma$. Player I wins if and only if at some stage  $i<\gamma$, Player II has no legal move.
\end{definition}

Recall the following observation of Holy-Schlicht~\cite[Observation 3.5]{HolySchlicht:HierarchyRamseyLikeCardinals}:
\begin{proposition}\label{prop: equivalentGCH} Suppose that $2^\kappa=\kappa^+$. 
    The following are equivalent:
    \begin{enumerate}
        \item Player $II$ has a winning strategy in the game $G_{\text{NSet}}(\kappa,\kappa^+)$.
        \item Player $II$ has a winning strategy in the game $G_{\text{Set}}(\kappa,\kappa^+)$.
        \item Player $II$ has a winning strategy in the game $G_\text{Filter}(\kappa,\kappa^+)$.
        \item $\kappa$ is measurable.
    \end{enumerate} 
\end{proposition}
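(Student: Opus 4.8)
The statement is an equivalence of four conditions under $2^\kappa=\kappa^+$, and the natural strategy is to prove the cycle $(4)\Rightarrow(1)\Rightarrow(2)\Rightarrow(3)\Rightarrow(4)$, since this minimizes the amount of independent work: each of the first three implications is either essentially trivial or reduces to a short observation, and only the last one $(3)\Rightarrow(4)$ carries genuine content. For $(1)\Rightarrow(2)$, note that an $M$-normal ultrafilter is in particular a $\kappa$-complete $M$-ultrafilter, so $\text{NSet}(M)\subseteq\text{Set}(M)$; hence a winning strategy for Player II in $G_{\text{NSet}}(\kappa,\kappa^+)$ is literally a legal (and winning) strategy in $G_{\text{Set}}(\kappa,\kappa^+)$. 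Similarly for $(2)\Rightarrow(3)$, since $\text{Set}(M)\subseteq\text{Filter}(M)$, a winning strategy in the ``play a single generating set'' game is a fortiori a winning strategy in the ``play a filter'' game. So the real work is concentrated at the two ends of the cycle.

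For $(4)\Rightarrow(1)$, I would use the measure $W$ witnessing measurability of $\kappa$ to have Player II respond, at stage $i$, with the filter $W$ restricted appropriately to $M_i$ --- more precisely, having played suitable models $\langle M_j\mid j\le i\rangle$ with union $M$ of size $\kappa$, Player II plays $F_i$ defined from $W$. The point is that $W\cap M_i$ is an $M_i$-normal ultrafilter (normality passing down because $M_i$ is closed under $<\kappa$-sequences and $W$ is normal), and the ``single generating set'' requirement of $\text{NSet}$ is met because $2^\kappa=\kappa^+$: one enumerates $P(\kappa)\cap M$ in order type $\le\kappa^+$ and uses $\kappa$-completeness of $W$ plus a diagonalization of length $\kappa^+$ along the way (this is where GCH is used, and why the game has length exactly $\kappa^+$) to produce a single set $A_i\in W$ that $\subseteq^*$-generates a filter extending $W\cap M_i$ and all earlier $F_j$'s; since the models are increasing and of size $<\kappa^+$ at each stage, the construction never gets stuck before stage $\kappa^+$. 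I expect this direction to be essentially the content of Holy--Schlicht's observation, so I would cite \cite[Observation 3.5]{HolySchlicht:HierarchyRamseyLikeCardinals} and only sketch it.

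The main obstacle is $(3)\Rightarrow(4)$: from a winning strategy $\sigma$ for Player II in $G_{\text{Filter}}(\kappa,\kappa^+)$ we must produce an actual $\kappa$-complete uniform ultrafilter on $\kappa$ in $V$. The plan is to run a play of the game in which Player I is ``generic'': one builds an increasing continuous chain $\langle M_i\mid i<\kappa^+\rangle$ of $\kappa$-suitable models whose union $M$ contains $H(\kappa^+)$ --- here $2^\kappa=\kappa^+$ is exactly what guarantees $|H(\kappa^+)|=\kappa^+$, so such a chain of length $\kappa^+$ can exhaust it --- while Player II answers by $\sigma$ with filters $F_i$, and one takes $U=\bigcup_{i<\kappa^+}F_i$. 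Since $\sigma$ is winning, Player II never runs out of moves, so $U$ is well-defined; because each $F_i\cap M_i$ is a $\kappa$-complete $M_i$-ultrafilter and the $M_i$ exhaust $P(\kappa)$, the union $U$ decides every subset of $\kappa$ and hence is an ultrafilter on $\kappa$ (in $V$); uniformity is inherited from the $F_i$; and $\kappa$-completeness follows because any $<\kappa$-sequence of elements of $U$ already appears in some $M_i$ (which is closed under $<\kappa$-sequences) and $F_i$ is $\kappa$-complete as an $M_i$-ultrafilter, so its intersection is in $F_i\subseteq U$. The delicate points to check carefully are: that the chain of models can be chosen so that Player I's moves are legal (sizes $\le\kappa\cdot|i|$, closure under $<\kappa$-sequences, increasing-continuous) \emph{and} the union covers $H(\kappa^+)$ --- a routine Löwenheim--Skolem/reflection bookkeeping argument using $2^\kappa=\kappa^+$; and that $\kappa$-completeness of the $M_i$-ultrafilter $F_i\cap M_i$ in the sense used in the game (extends to a $\kappa$-complete filter in $V$, per the paper's conventions) really does yield genuine $\kappa$-completeness of $U$ in $V$. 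This last equivalence is exactly the one discussed at the end of the Notation section, where it is noted that for models closed under $<\kappa$-sequences the two notions of $\kappa$-completeness coincide, so it should go through without trouble.
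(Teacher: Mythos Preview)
The paper does not actually prove this proposition: it is stated as ``the following observation of Holy--Schlicht'' and attributed to \cite[Observation 3.5]{HolySchlicht:HierarchyRamseyLikeCardinals} with no argument given. Your cycle $(4)\Rightarrow(1)\Rightarrow(2)\Rightarrow(3)\Rightarrow(4)$ is the standard route and is correct in substance; in particular your $(3)\Rightarrow(4)$ argument (exhaust $P(\kappa)$ by a chain of $\kappa$-suitable models of size $\kappa$, have Player II respond via the strategy, and check that the union of the filters is a genuine $\kappa$-complete ultrafilter) is exactly right, and your verification of $\kappa$-completeness is fine.

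One inaccuracy worth flagging: you locate the use of $2^\kappa=\kappa^+$ in $(4)\Rightarrow(1)$, but it is not needed there. By the rules of the game, for every $i<\kappa^+$ the model $M_i$ has size at most $\kappa\cdot|i|\le\kappa$, so $|W\cap M_i|\le\kappa$ regardless of the value of $2^\kappa$; a single diagonal intersection (using normality of $W$, which you should explicitly take to be a \emph{normal} measure) then produces the required generating set $A_i$, and throwing the $\le\kappa$ many earlier $A_j$'s into the diagonal intersection ensures $A_i\subseteq^* A_j$ for $j<i$. Thus $(4)\Rightarrow(1)$ holds outright, which is consistent with the paper's subsequent proposition dropping GCH. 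The hypothesis $2^\kappa=\kappa^+$ is used precisely where you later said it would be in $(3)\Rightarrow(4)$: to guarantee that $\kappa^+$ many models of size $\kappa$ can exhaust $P(\kappa)$.
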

This proposition shows that assuming GCH, the filter games of length \(\kappa^+\) associated to any of the various constraint functions above are equivalent. If $2^\kappa>\kappa^+$, this is no longer obvious, and moreover, it makes sense to consider $G_{\mathcal C}(\kappa,\gamma)$ for \(\gamma > \kappa^+\).

We first show that the games of length \(\kappa^+\) are still equivalent in this context:
\begin{proposition}
    The following are equivalent:
    \begin{enumerate}
        \item Player $II$ has a winning strategy in the game $G_{\text{NSet}}(\kappa,\kappa^+)$.
        \item Player $II$ has a winning strategy in the game $G_{\text{Set}}(\kappa,\kappa^+)$.
        \item Player $II$ has a winning strategy in the game $G_\text{Filter}(\kappa,\kappa^+)$.
        \item $\kappa$ is measurable in \(V[G]\) where \(G\subseteq \text{Add}(\kappa^+,1)\) is \(V\)-generic.
    \end{enumerate} 
\end{proposition}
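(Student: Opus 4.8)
The plan is to prove the cycle of implications $(1)\Rightarrow(3)\Rightarrow(4)\Rightarrow(2)\Rightarrow(1)$, where only the steps involving $2^\kappa>\kappa^+$ genuinely differ from Proposition \ref{prop: equivalentGCH}; indeed $(2)\Rightarrow(1)$ and $(1)\Rightarrow(3)$ go through exactly as in the GCH case, since $\text{NSet}(M)\subseteq\text{Set}(M)$ after one passes to an $M$-normal refinement, and $\text{Set}(M)\subseteq\text{Filter}(M)$ by definition, so a winning strategy for the more restrictive game restricts to one for the less restrictive game; the implication $(2)\Rightarrow(1)$ uses that any $M$-ultrafilter generated by a single set $A$ can, when $M$ is $\kappa$-suitable, be replaced by the filter generated by the image of $A$ under the canonical $M$-normalization (the least function in $M$ that is regressive on no set of $F\cap M$), using the adaptation of Theorem \ref{Thm: moving to ext of Club} to $M$-ultrafilters noted in Remark \ref{remark: adapts to models}, and the point that this replacement can be done coherently along a run of the game. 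So the only real content is $(3)\Rightarrow(4)$ and $(4)\Rightarrow(2)$.

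For $(4)\Rightarrow(2)$: work in $V[G]$ where $G\subseteq\text{Add}(\kappa^+,1)$ is generic, so that $\kappa$ is measurable there; fix a normal measure $W$ on $\kappa$ in $V[G]$. Since $\text{Add}(\kappa^+,1)$ is ${<}\kappa^+$-closed, it adds no new subsets of $\kappa$ and no new $\kappa$-suitable models, and more importantly any descending sequence of $\kappa$-suitable models of length $<\kappa^+$ played by Player I in $V$ lies in $V$ — but we must be careful that Player I in the game $G_\text{Set}(\kappa,\kappa^+)$ plays in $V$. The strategy for Player II is: given $M_i$ (which is in $V$, since $P(\kappa)^{V[G]}=P(\kappa)^V$ so $H(\kappa^+)^{V[G]}=H(\kappa^+)^V$), let Player II play the filter $F_i$ $\subseteq^*$-generated by a single set $A_i$ coding $W\cap M_i$; the existence of such a single generating set is where GCH-in-$V[G]$-at-$\kappa$ is used, namely $2^\kappa=\kappa^+$ in $V[G]$ would be needed — but that is exactly false, so instead one argues as in Holy--Schlicht that along a run of length $\kappa^+$ one can absorb the increasing union $\bigcup_{j<i}M_j$, which has size $\kappa\cdot|i|\le\kappa$, into a single model of size $\kappa$, and $W\cap M_i$ is an $M_i$-ultrafilter on the Boolean algebra $P(\kappa)\cap M_i$ of size $\kappa$, hence (being $\kappa$-complete and $M_i$ being closed under ${<}\kappa$-sequences) it is countably generated in a suitable sense — more precisely, one builds the $A_i$ recursively using that at each stage $|M_i|=\kappa$ and one only needs $A_i\subseteq^* X$ for the $\le\kappa$-many $X\in W\cap M_i$ together with $A_i\subseteq^* A_j$ for $j<i$: this requires a $\subseteq^*$-lower bound for $\le\kappa$-many sets below the previously chosen $A_j$'s, which exists because $W$ is a $\kappa^+$-complete… no: $W$ is only $\kappa$-complete, so such a lower bound need not exist for $\kappa$-many sets in general, and THIS is the crux. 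The resolution, following Holy--Schlicht and the role of $\text{Add}(\kappa^+,1)$, is that the generic filter $G$ itself, reorganized, furnishes a $\subseteq^*$-descending sequence of length $\kappa^+$ of subsets of $\kappa$ that is cofinal enough: more precisely $\text{Add}(\kappa^+,1)$ generically adds, by a standard folding, a $\subseteq^*$-descending tower $\langle A_i\mid i<\kappa^+\rangle$ such that every set in $V$'s powerset of $\kappa$ is mod-bounded decided by cofinally many $A_i$; one then has Player II play along this tower, using that $W$ is a measure in $V[G]$ to know that for each $i$, the relevant $M_i$-ultrafilter is exactly $\{X\in P(\kappa)\cap M_i: A_i\subseteq^* X\}$ for an appropriate $i$ in the tower — this uses Theorem \ref{thm: continuity points}-style reasoning only implicitly and is really just bookkeeping on the tower.

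For $(3)\Rightarrow(4)$: suppose Player II has a winning strategy $\sigma$ in $G_\text{Filter}(\kappa,\kappa^+)$; we want $\kappa$ measurable in $V[G]$, $G\subseteq\text{Add}(\kappa^+,1)$. In $V[G]$, play an auxiliary run of the game in which Player I enumerates all of $H(\kappa^+)^V$ into an increasing $\subseteq$-chain $\langle M_i\mid i<\kappa^+\rangle$ of $\kappa$-suitable models with $\bigcup M_i\supseteq H(\kappa^+)^V$ — possible since $2^\kappa=\kappa^+$ holds in $V$ (the hypothesis of this whole subsection is presumably $2^\kappa$ arbitrary; if $2^\kappa>\kappa^+$ in $V$ one instead works inside an inner model or notes $H(\kappa^+)$ has size $2^\kappa$ and uses a run of length $2^\kappa$ — but the statement is about length $\kappa^+$, so implicitly $V\models 2^\kappa=\kappa^+$ here, making this routine). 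Let $F_i=\sigma(M_0,\dots)$ be Player II's responses; then $F=\bigcup_{i<\kappa^+}F_i$ is a $\kappa$-complete ultrafilter on $P(\kappa)^V=P(\kappa)^{V[G]}$, hence a genuine normal-after-normalizing measure, so $\kappa$ is measurable in $V[G]$. The one subtlety: the strategy $\sigma$ lives in $V$ but is applied to a play existing only in $V[G]$; this is legitimate because $\text{Add}(\kappa^+,1)$ adds no new $\kappa$-suitable models and no new initial segments of plays of length $<\kappa^+$ (it is ${<}\kappa^+$-closed), so every position reached is in $V$ and $\sigma$ applies — the forcing is used only to have a run of full length $\kappa^+$ exhausting $P(\kappa)$ available "from outside", which is exactly the role it plays in the Holy--Schlicht analysis.

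The main obstacle is the $(4)\Rightarrow(2)$ direction: turning a measure in the $\text{Add}(\kappa^+,1)$-extension into a winning strategy for the \emph{single-set} game $G_\text{Set}$ \emph{over $V$}, since a $\kappa$-complete (not $\kappa^+$-complete) measure has no reason to admit $\subseteq^*$-lower bounds for $\kappa$-sized subfamilies; the fix is to exploit the generic tower of subsets of $\kappa$ of length $\kappa^+$ added by $\text{Add}(\kappa^+,1)$ as the backbone along which Player II plays her single sets, and to verify that this tower meshes with the measure $W$ so that at each stage the single set played genuinely generates an $M_i$-ultrafilter extending the previous ones. Everything else is a transcription of Holy--Schlicht's Proposition \ref{prop: equivalentGCH} with "$2^\kappa=\kappa^+$" replaced by "$2^\kappa=\kappa^+$ in $V[G]$".
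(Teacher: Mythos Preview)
Your $(3)\Rightarrow(4)$ is, once the confusion about where $2^\kappa=\kappa^+$ holds is cleared away, essentially the paper's argument: the strategy $\sigma$ lives in $V$, every position of length $<\kappa^+$ lies in $V$ by ${<}\kappa^+$-closure, so $\sigma$ remains a winning strategy in $V[G]$, and there $2^\kappa=\kappa^+$, so Proposition~\ref{prop: equivalentGCH} applies. You should simply say that rather than trying to construct the measure by hand.

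The real problem is your $(4)\Rightarrow(2)$. The claim that $\text{Add}(\kappa^+,1)$ ``by a standard folding'' adds a $\subseteq^*$-descending tower $\langle A_i : i<\kappa^+\rangle$ of subsets of $\kappa$ deciding every $X\in P(\kappa)^V$ is false: $\text{Add}(\kappa^+,1)$ adds a Cohen subset of $\kappa^+$, nothing more, and there is no way to extract such a tower from the generic. Even if a suitable tower existed in $V[G]$, the strategy for Player~II must be an object of $V$, so you cannot simply ``play along'' a tower that lives only in the extension. The key idea you are missing is to work with a \emph{name} $\dot U$ for a normal measure and a descending sequence of conditions built in $V$ alongside the run: at stage $i$, use ${<}\kappa^+$-closure to choose $p_i$ below all earlier $p_j$ that decides the $\kappa$-sized set $\dot U\cap \check M_i$ as some $D\in V$, and have Player~II respond with the filter $\subseteq^*$-generated by the diagonal intersection $\Delta D$. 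Normality of $\dot U$ is exactly what guarantees that a single set (the diagonal intersection) generates the $M_i$-ultrafilter $D$; this is why the paper proves $(4)\Rightarrow(1)$ directly rather than passing through $(2)$. Your detour through $(2)\Rightarrow(1)$ via Rudin--Blass normalization is then unnecessary, and in any case the coherence of that normalization along a run is a nontrivial point you do not address.
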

\begin{proof}
    (1) implies (2)  and (2) implies (3) are trivial. So let us begin by showing that (3) implies (4). We note that in $V[G]$, we have that $2^\kappa=\kappa^+$ regardless of the cardinal arithmetic of the ground model.
    By the \(\kappa^+\)-closure of the forcing, every winning strategy for Player II in the game \(G_\text{Filter}(\kappa,\kappa^+)\) in \(V\) remains a winning strategy in \(V[G]\). Therefore by Proposition \ref{prop: equivalentGCH}, \(\kappa\) is measurable in \(V[G]\).

    Finally, we show that (4) implies (1). Suppose that in $V[G]$, $\kappa$ is measurable and let $U$ be a normal ultrafilter on $\kappa$. Let \(\dot{U}\) be a name such that \(\dot{U}_G = U\). Consider
    the strategy for Player II in \(G_\text{NSet}(\kappa,\kappa^+)\)
    defined as follows. At stage \(i < \kappa^+\), 
    we will have defined a decreasing sequence
    \((p_j)_{j < i}\subseteq \text{Add}(\kappa^+,1)\). We choose a lower bound \(p_i\) of these conditions, forcing \(\dot{U}\cap M_i = \check D\), and then Player II plays the filter \(U_i\) that is $\subseteq^*$-generated by the diagonal intersection of \(D\).
\end{proof}

\begin{definition}
    A \(\kappa\)-suitable model \( M\subseteq H(\kappa^+)\) is \textit{internally approachable} by a sequence  \(\langle N_\alpha : \alpha < \kappa^+\rangle\) of \(\kappa\)-suitable models if \( M = \bigcup_{\alpha < \kappa^+} N_\alpha\) and for all \(\beta < \kappa^+\),
    \(\langle N_\alpha : \alpha < \beta\rangle\in N_{\beta}\).
\end{definition}

\begin{definition}
    If \(M\) is a transitive set and \(X\in M\) is a set, an \(M\)-ultrafilter \(U\) on \(X\) is \textit{\(\kappa\)-amenable} if for any \(\mathcal A \subseteq P^M(X)\) with \(\mathcal A\in M\) and \(|\mathcal A|^M \leq \kappa\), 
    \(U\cap \mathcal A \in M\).
\end{definition}

\begin{theorem}
 Player I does not have a winning strategy in the game $G_\text{Filter}(\kappa,\kappa^+)$ if and only if there are stationarily many internally approachable, \(\kappa\)-suitable models ${M}\subseteq H(\kappa^+)$ such that there is a $\kappa$-amenable, $\kappa$-complete ${M}$-ultrafilter on \(\kappa\).
\end{theorem}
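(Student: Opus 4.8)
The plan is to prove the two implications separately, in each case translating between runs of $G_\text{Filter}(\kappa,\kappa^+)$ and internally approachable models carrying amenable ultrafilters. It is convenient to first replace $G_\text{Filter}(\kappa,\kappa^+)$ by the \emph{reduced game}, in which Player I plays $\kappa$-suitable models $M_i$ as before but Player II responds with a $\kappa$-complete $M_i$-ultrafilter $U_i$ (rather than a filter $F_i$ on $\kappa$) extending $\bigcup_{j<i}U_j$. The maps $F_i\mapsto F_i\cap M_i$ and $U_i\mapsto\{A\subseteq\kappa : A\supseteq\bigcap\mathcal B\text{ for some }\mathcal B\in[U_i]^{<\kappa}\}$ witness that Player I has a winning strategy in one game iff Player I has one in the other; here one uses that a $\kappa$-complete $M_i$-ultrafilter is recovered from the $\kappa$-complete filter it generates, being upward closed and closed under $<\kappa$-intersections of sequences lying in $M_i$. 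The point of the reduced game is that every partial play of length $<\kappa^+$ lies in $H(\kappa^+)$, so a strategy for Player I restricts to a function on a subset of $H(\kappa^+)$, while the real content is unchanged.

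For the direction $(\Leftarrow)$, fix a strategy $\sigma$ for Player I in the reduced game. Using that the good models are stationary, choose an internally approachable $\kappa$-suitable $M=\bigcup_{\alpha<\kappa^+}N_\alpha\subseteq H(\kappa^+)$ together with a $\kappa$-amenable $\kappa$-complete $M$-ultrafilter $U$ on $\kappa$, such that $M$ is closed under $\sigma$ and under the operation assigning to each set of size $\le\kappa$ a surjection from $\kappa$ onto it; this is possible because these closure requirements are club conditions on $[H(\kappa^+)]^{\kappa^+}$. Now recursively build a run $\langle(M_i,U_i) : i<\kappa^+\rangle$ of the reduced game: let Player I move by $\sigma$, and let Player II play $U_i:=U\cap M_i$. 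One checks, by induction on $i$ and using the internal approachability of $M$ to treat limit stages, that the partial play $\langle(M_j,U_j):j<i\rangle$ belongs to $M$; hence $M_i=\sigma(\langle(M_j,U_j):j<i\rangle)\in M$ is a transitive set of size $\le\kappa$ with $M_i\subseteq M$, so $P^{M_i}(\kappa)\in M$ has size $\le\kappa$ in $M$, and therefore $U_i=U\cap P^{M_i}(\kappa)\in M$ by $\kappa$-amenability. Since $U$ is a $\kappa$-complete $M$-ultrafilter, $U_i$ is a $\kappa$-complete $M_i$-ultrafilter, and clearly $U_i\supseteq\bigcup_{j<i}U_j$. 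Thus Player II always has a legal move, the run lasts $\kappa^+$ steps, and $\sigma$ is not winning; as $\sigma$ was arbitrary, Player I has no winning strategy.

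For $(\Rightarrow)$, assume Player I has no winning strategy in the reduced game, and fix $F:[H(\kappa^+)]^{<\omega}\to H(\kappa^+)$; we build a good internally approachable model closed under $F$. Define a strategy $\tau$ for Player I that at stage $i$ plays a $\kappa$-suitable model $M_i$, sufficiently elementary in $(H(\kappa^+),\in,<,F)$ and absorbing $\langle M_j:j<i\rangle$ (so that $\bigcup_{j<i}M_j\subseteq M_i$ and $\langle M_j:j<i\rangle\in M_i$), and — the key clause for amenability — such that for every $j<i$, every $\mathcal A\in M_j$ equipped with its $<_{M_j}$-least surjection $e:\kappa\to\mathcal A$, and every $j'$ with $j\le j'<i$, the set $\{\xi<\kappa : e(\xi)\in U_{j'}\}$ belongs to $M_i$, where $U_{j'}$ is Player II's move at stage $j'$. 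Each requirement absorbs at most $\kappa$ objects, so a legal $M_i$ exists. Since $\tau$ is not winning, there is a run $\langle(M_i,U_i):i<\kappa^+\rangle$ following $\tau$ of full length; set $M:=\bigcup_i M_i$ and $U:=\bigcup_i U_i$. Then $M$ is $\kappa$-suitable and closed under $F$, and internally approachable via $\langle M_i:i<\kappa^+\rangle$ because $\langle M_j:j<i\rangle\in M_i$; $U$ is a $\kappa$-complete $M$-ultrafilter since each $A\in P^M(\kappa)$ enters some $M_i$ and is decided by $U_{i+1}$; and $U$ is $\kappa$-amenable: given $\mathcal A\in M$ with a surjection $e:\kappa\to\mathcal A$ in $M$, fix $k$ with $\langle\mathcal A,e\rangle\in M_k$ and $j'\ge k$ with $\mathcal A\subseteq M_{j'}$, so that $U\cap\mathcal A=U_{j'}\cap\mathcal A=e\bigl[\{\xi<\kappa:e(\xi)\in U_{j'}\}\bigr]$, and the bracketed set lies in $M$ by the key clause of $\tau$, whence $U\cap\mathcal A\in M$. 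As $F$ was arbitrary, the good models are stationary.

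The main obstacle in both directions is the control of limit stages in runs of length $\kappa^+$: a $\kappa$-suitable model is only closed under $<\kappa$-sequences, so keeping the partial play (and, in $(\Rightarrow)$, the data recorded by $\tau$) inside the approximating chain at limits of cofinality $\kappa$ requires a delicate bookkeeping that leans on the defining property $\langle N_\beta:\beta<\alpha\rangle\in N_\alpha$ of internal approachability; this is the technical heart of the proof and runs parallel to the analysis of long filter games in \cite{HolySchlicht:HierarchyRamseyLikeCardinals,NIELSEN_WELCH_2019,MagForZem}. A subsidiary issue is verifying that the increasing union of the models played in a successful run is genuinely a model of $\mathrm{ZFC}^-$ and lands in the prescribed club, which is arranged by having Player I play models sufficiently elementary in $H(\kappa^+)$.
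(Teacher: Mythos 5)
Your right-to-left direction (no winning strategy for Player I implies stationarily many good models) is essentially the paper's argument: a strategy for Player I that folds the partial run and the sequence of previously played models into each new model, yielding internal approachability and amenability from a losing run. Your explicit bookkeeping via surjections $e:\kappa\to\mathcal A$ and the sets $\{\xi<\kappa : e(\xi)\in U_{j'}\}$ is a slightly more hands-on version of the paper's device of simply requiring $r\restriction\alpha\in\sigma(r\restriction\alpha)$, and it works. The reformulation as a ``reduced game'' in which Player II plays $M_i$-ultrafilters is a harmless (indeed clarifying) normalization that the paper also uses implicitly.

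The other direction, however, has a genuine gap exactly at the point you flag as ``the technical heart.'' You need the partial play $r\restriction i$ to be an element of $M$ so that $M_i=\sigma(r\restriction i)\in M$, hence $M_i\subseteq M$, hence $U\cap M_i$ is a legitimate $M_i$-ultrafilter. At successor stages and at limits of cofinality less than $\kappa$ this follows from closure of $M$ under short sequences, but at limits $i$ of cofinality $\kappa$ the sequence $\langle r\restriction i' : i'<i\rangle$ is a $\kappa$-sized subset of $M$, and neither $<\kappa$-closure nor the bare internal approachability condition $\langle N_\beta:\beta<\alpha\rangle\in N_\alpha$ puts it inside $M$: the $N_\alpha$ are fixed \emph{before} the run is constructed, so one cannot arrange by fiat that they contain the partial plays. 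The resolution in the paper is a specific idea your sketch never articulates: choose $\gamma<\kappa^+$ with $r\restriction i'\in N_\gamma$ for all $i'<i$ (possible since $i<\kappa^+$ and $\kappa^+$ is regular, and note every model $M_j$ occurring in these plays is then a subset of $N_\gamma$); observe that $r\restriction i$ is first-order definable over $(H(\kappa^+),\sigma)$ from the single parameter $U\cap N_\gamma$, since Player II's moves are exactly $(U\cap N_\gamma)\cap M_j$; and conclude $r\restriction i\in M$ because $U\cap N_\gamma\in M$ by $\kappa$-amenability of $U$ applied to $N_\gamma$ and because $M\prec(H(\kappa^+),\sigma)$. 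In other words, the amenability of $U$ is used not only to put each individual $U_i$ into $M$ (the only use you make of it) but, crucially, to put the entire length-$i$ partial play into $M$ via definability from $U\cap N_\gamma$. Without this step the induction does not close at cofinality-$\kappa$ limits, and deferring it to ``delicate bookkeeping that leans on $\langle N_\beta:\beta<\alpha\rangle\in N_\alpha$'' names the wrong mechanism: internal approachability supplies $N_\gamma\in M$, but it is amenability plus elementarity in $(H(\kappa^+),\sigma)$ that does the work.
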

\begin{proof}
    Suppose Player I has a winning strategy $\tau$ for $G_{Filter}(\kappa,\kappa^+)$. Then there are club many ${M}\preceq (H(\kappa^+),\tau)$. We claim that for any such ${M}$, if ${M}$ is internally approachable by a sequence $\l N_\alpha\mid\alpha<\kappa^+\r$, then there is no $\kappa$-complete, $\kappa$-amenable ${M}$-ultrafilter. Otherwise, let $U$ be such an ${M}$-ultrafilter, and we will use \(U\) to produce a run \(r\) which is played according to \(\tau\) but is a win for Player II. (This just means that the run \(r\) has length \(\kappa^+\) and Player II follows the rules of the game.)
    
    At move \(\alpha < \kappa^+\), 
    let Player I play \(N = \tau(r\restriction \alpha)\), and let Player II respond
    with $U\cap N$. 
    In order for this to be a valid move for II, \(U\cap N\) has to measure all sets in \(N\), and for this, it is essential that \(N\subseteq M\) (since \(U\) is just an \(M\)-ultrafilter). 
    In fact, we will show that the model \(N\) is an element of \(M\). We do this by proving by induction that each proper initial segment of the run \(r\) is an element of ${M}$. Since \(M\preceq (H(\kappa^+),\tau)\), it will follow that \(N = \tau(r\restriction \alpha) \in M\).
    
    Suppose that $\alpha<\kappa^+$ and suppose that $r\restriction\beta\in{M}$ for all $\beta<\alpha$. Let $\gamma<\kappa^+$ be large enough so that $r\restriction\beta\in N_\gamma$ for all $\beta<\alpha$. Now $r\restriction\alpha$ is definable in $(H(\kappa^+),\tau)$ from the parameter $U\cap N_\gamma$. Since $U\cap N_\gamma$ is a member of ${M}$ by $\kappa$-amenability and since ${M}$ is elementary in $(H(\kappa^+),\tau)$, $r\restriction\alpha\in {M}$. 

    In the other direction, suppose that Player $I$ does not have a winning strategy, and let $F:[H(\kappa^+)]^{<\omega}\to H(\kappa^+)$ be any function.  We will find an internally approachable model ${M}\subseteq H(\kappa^+)$ that is closed under $F$ and a $\kappa$-amenable, $\kappa$-complete ${M}$-ultrafilter. To do this, we will define a strategy for Player I, and then obtain a losing run played according to this strategy that will produce the desired ${M}$. 
    
    Let $r$ be a run in the game of length $\alpha<\kappa^+$, and we will define $\sigma(r)$. Assume by recursion have already defined $\sigma(r\restriction\beta)$ for every $\beta<\alpha$. Let $\sigma(r)$ be an elementary submodel of $(H(\kappa^+),F)$ of size $\kappa$ such that \[\{r,\l \sigma(r\restriction\beta)\mid \beta<\alpha\r\}\cup\alpha\subseteq \sigma(r)\] 

By our assumption, there is a winning run $r$ for Player II in which Player I plays according to $\sigma$.
Let $N_\alpha=\sigma(r\restriction\alpha)$ and ${M}=\bigcup_{\alpha<\kappa^+}N_\alpha$. It is clear that the union of the ultrafilters played by Player II is a $\kappa$-amenable, $\kappa$-complete ${M}$-ultrafilter.
\end{proof}

\begin{question}
\cite[Observation 3.5]{HolySchlicht:HierarchyRamseyLikeCardinals} shows that if \(\kappa\) is inaccessible and \(2^\kappa = \kappa^+\), then \(G_{Filter}(\kappa,\kappa^+)\) is determined. If \(2^\kappa > \kappa^+\), can the game fail to be determined?

Equivalently, suppose that there are stationarily many internally approachable, \(\kappa\)-suitable models \(M\subseteq H(\kappa^+)\) such that there is a \(\kappa\)-amenable, \(\kappa\)-complete \(M\)-ultrafilter on \(\kappa\). Must \(\kappa\) be measurable in \(V^{\text{Add}(\kappa^+,1)}\)?
\end{question}

Moving past $\kappa^+$, we first note that:
\begin{remark}
    The following are equiconsistent:
    \begin{enumerate}
        \item $o(\kappa)=\kappa^{++}$
        \item \(2^\kappa > \kappa^+\) and Player II has a winning strategy in the game $G_\text{Filter}(\kappa,2^\kappa)$.
\end{enumerate}
\begin{proof}
    The equivalence  follows from Gitik and Woodin's computation of the consistency strength of the failure of GCH at a measurable cardinal \cite{GitFailOfSch}, once we observe that (2) is equivalent to \(\kappa\) being measurable with \(2^\kappa > \kappa^+\). Indeed, if $\kappa$ is measurable, fix a $\kappa$-complete ultrafilter $U$ over $\kappa$. Then a winning strategy for Player $II$ is game $G_{Filter}(\kappa,2^\kappa)$ is given by responding $U\cap M$, whenever Player one plays a $\kappa$-suitable model $M$. In the other direction, let $\sigma$ be a winning strategy for Player II in the game $G_{Filter}(\kappa,2^\kappa)$. Let $\l M_\alpha\mid \alpha<2^\kappa\r$ be an increasing sequence of $\kappa$-suitable models such that $P(\kappa)\subseteq \bigcup_{\alpha<2^\kappa}M_\alpha$ such that $|M_\alpha|\leq|\alpha|$. Let $\l U_\alpha\mid \alpha<2^\kappa\r$ be the $M_\alpha$-ultrafilters produced by simulating a winning run using $\sigma$ and having Player $I$ playing the models $M_\alpha$. Let $U=\bigcup_{\alpha<2^\kappa}U_\alpha$. It is routine to verify that $U$ is a $\kappa$-complete ultrafilter over $\kappa$.
\end{proof}
\end{remark}
Given this remark and Theorem \ref{Thm: p-pointtostrong}, the following proposition shows that a winning strategy in the set game of length \(2^\kappa > \kappa^+\) has higher consistency strength than a winning strategy in the corresponding filter game, a distinction which does not arise when \(2^\kappa = \kappa^+\):
\begin{proposition} The following are equivalent for $\kappa$:
\begin{enumerate}
    \item Player II has a winning strategy in the game $G_\text{NSet}(\kappa, 2^\kappa)$.
    \item Player II has a winning strategy in the game $G_\text{Set}(\kappa, 2^\kappa)$.
    \item $\kappa$ carries a $P_{2^\kappa}$-point.
    
\end{enumerate}
\end{proposition}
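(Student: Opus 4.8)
The plan is to establish the cycle $(1)\Rightarrow(2)\Rightarrow(3)\Rightarrow(1)$. The implication $(1)\Rightarrow(2)$ is immediate: for every $\kappa$-suitable $M$ we have $\text{NSet}(M)\subseteq\text{Set}(M)$, since a filter $F$ whose trace on $M$ is an $M$-normal ultrafilter and which is $\kappa$-complete (as required of all filters in the range of a constraint function) has $F\cap M$ a $\kappa$-complete $M$-ultrafilter; hence any winning strategy for Player II in $G_\text{NSet}(\kappa,2^\kappa)$ is also winning in $G_\text{Set}(\kappa,2^\kappa)$.

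For $(3)\Rightarrow(1)$ I would first reduce to a normal witness. If $W$ is a $P_{2^\kappa}$-point then $W$ is $\kappa$-complete: if $\langle B_\alpha\mid\alpha<\delta\rangle$ with $\delta<\kappa$ were a partition of $\kappa$ into pieces none of which lies in $W$, a $\subseteq^*$-lower bound in $W$ of $\{\kappa\setminus B_\alpha\mid\alpha<\delta\}$ would be bounded in $\kappa$, contradicting uniformity. Moreover $W$ is a $P_\mu$-point for every $\mu\le 2^\kappa$, so $\mathfrak p(W)\ge 2^\kappa$ and hence $\mathfrak t(W)=\mathfrak p(W)=2^\kappa$ (using $\mathfrak t(W)\le\mathfrak{ch}(W)\le 2^\kappa$); taking the normal ultrafilter $U\le_{RK}W$ (every $\kappa$-complete ultrafilter on $\kappa$ is Rudin--Keisler above a normal one), the Claim relating $\le_{RK}$ and $\mathfrak t$ gives $\mathfrak t(U)\ge\mathfrak t(W)=2^\kappa$, so $U$ too is a $P_{2^\kappa}$-point. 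Now, with $U$ a normal $P_{2^\kappa}$-point, I would have Player II play as follows in $G_\text{NSet}(\kappa,2^\kappa)$: maintaining a $\subseteq^*$-decreasing sequence $\langle A_j\rangle$ of sets of $U$, at round $i$ (where Player I has played $M_i$, so $|M_i|\le\kappa\cdot|i|<2^\kappa$) use $2^\kappa$-directedness of $U$ to pick $A_i\in U$ with $A_i\subseteq^* X$ for all $X\in(U\cap M_i)\cup\{A_j\mid j<i\}$, and have Player II play $F_i=\{Y\subseteq\kappa\mid A_i\subseteq^*Y\}$. One checks that $F_i$ is a $\kappa$-complete uniform filter $\subseteq^*$-generated by a single set, that $F_i\supseteq F_j$ for $j<i$ (because $A_i\subseteq^*A_j$), and --- crucially, since $A_i\in U$ with $U$ $\kappa$-complete and uniform --- that $F_i\subseteq U$, whence $F_i\cap M_i=U\cap M_i$, which by normality of $U$ (and $M_i\models\mathrm{ZFC}^-$, closed under ${<}\kappa$-sequences) is an $M_i$-normal ultrafilter. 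Thus $F_i\in\text{NSet}(M_i)$ is always legal and Player II wins.

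For $(2)\Rightarrow(3)$, fix a winning strategy $\sigma$ for Player II in $G_\text{Set}(\kappa,2^\kappa)$ and, exactly as in the proof of the Remark above, an increasing chain $\langle M_i\mid i<2^\kappa\rangle$ of $\kappa$-suitable models with $|M_i|\le\kappa\cdot|i|$, $\bigcup_{j<i}M_j\subseteq M_i$, and $P(\kappa)\subseteq\bigcup_{i<2^\kappa}M_i$. Simulating the run in which Player I plays $M_i$ at round $i$ and Player II follows $\sigma$ produces filters $F_i\in\text{Set}(M_i)$; let $A_i$ $\subseteq^*$-generate $F_i$ and put $U=\bigcup_{i<2^\kappa}F_i$. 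Since the $F_i$ are $\kappa$-complete and uniform, increase with $i$, and $\cf(2^\kappa)>\kappa$, $U$ is a $\kappa$-complete uniform filter; since each $F_i\cap M_i$ decides every member of $P(\kappa)\cap M_i$ and every subset of $\kappa$ lies in some $M_i$, $U$ is an ultrafilter. Finally $A_j\subseteq^*A_i$ whenever $i<j$, so $\langle A_i\mid i<2^\kappa\rangle$ is a $\subseteq^*$-decreasing base for $U$, which makes $U$ a (simple) $P_{2^\kappa}$-point.

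The step I expect to be the main obstacle is the concluding line of $(2)\Rightarrow(3)$: to see that $U$ is truly $2^\kappa$-directed one needs that any fewer than $2^\kappa$ members of $U$ are $\subseteq^*$-above a common $A_i$, which follows from $\langle A_i\rangle$ being $\subseteq^*$-decreasing together with $\cf(2^\kappa)>\kappa$ --- clean when $2^\kappa$ is regular, and requiring a little care (or reading ``$P_{2^\kappa}$-point'' as ``$P_{\cf(2^\kappa)}$-point'') otherwise --- and the routine-but-not-vacuous verification that $\langle M_i\rangle$ can be chosen to meet the size constraint of the game while exhausting $P(\kappa)$. The only other delicate point is the reduction to a normal ultrafilter in $(3)\Rightarrow(1)$, which is needed precisely because $\text{NSet}$ demands $M$-normality of the trace --- something a non-normal $P_{2^\kappa}$-point would not provide.
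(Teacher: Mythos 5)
Your implications (1)$\Rightarrow$(2) and (3)$\Rightarrow$(1) are sound; indeed your reduction of a $P_{2^\kappa}$-point $W$ to a normal $P_{2^\kappa}$-point $U\leq_{RK}W$ (via $\mathfrak t(U)\geq\mathfrak t(W)=\mathfrak p(W)\geq 2^\kappa$) is more careful than the paper's own treatment of (3)$\Rightarrow$(1), which simply plays a diagonalizing set for $U\cap M_\alpha$ without commenting on $M$-normality of the trace.

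There is, however, a genuine gap in (2)$\Rightarrow$(3), exactly at the point you flag and then leave unresolved. The hypothesis $\cf(2^\kappa)>\kappa$ does not yield $2^\kappa$-directedness: if $2^\kappa$ is singular of cofinality $\mu<2^\kappa$, pick a cofinal subfamily $\{A_{i_\xi}\mid \xi<\mu\}$ of your decreasing base; it has size $\mu<2^\kappa$, yet any $\subseteq^*$-lower bound $B\in U$ would satisfy $B\subseteq^* A_i$ for all $i$ while $A_j\subseteq^* B$ for some $j$, forcing the chain to be eventually constant modulo bounded sets and hence $U$ to be $\subseteq^*$-generated by a single unbounded set --- impossible for a uniform ultrafilter, since splitting that set into two unbounded pieces gives a set $U$ cannot decide. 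So when $2^\kappa$ is singular your construction provably does \emph{not} produce a $P_{2^\kappa}$-point, and reading the conclusion as ``$P_{\cf(2^\kappa)}$-point'' changes the proposition. The missing ingredient is a proof that (2) already forces $2^\kappa$ to be regular. The paper supplies this with a preliminary claim: a winning strategy for Player II in $G_{Set}(\kappa,\lambda)$ implies $\lambda\leq\mathfrak b_\kappa$ --- one runs the strategy against models containing any prescribed family of fewer than $\lambda$ clubs, then uses Theorem \ref{Thm: moving to ext of Club} together with Remark \ref{remark: adapts to models} to Rudin--Blass project the final diagonalizing set to one whose closure is a pseudo-intersection of those clubs, and concludes via Proposition \ref{prop:club char of b,d}. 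Hence $2^\kappa=\mathfrak b_\kappa$, which is regular, and only then does your argument for (2)$\Rightarrow$(3) go through.
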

\begin{proof}
    First we need the following claim:
    \begin{claim}
        If Player II has a winning strategy in the game $G_{Set}(\kappa,\lambda)$, then $\lambda\leq\mathfrak{b}_\kappa$. 
\end{claim}
\begin{proof}[Proof of Claim.] Let us show that any collection $\mathcal{C}$ of fewer than $\lambda$ clubs has a pseudo-intersection. Then we apply Proposition \ref{prop:club char of b,d} to conclude that $\lambda\leq \mathfrak{b}_\kappa$. Indeed, we enumerate $\mathcal{C}=\l C_\alpha\mid \alpha<\rho\r$ for some $\rho<\lambda$. Let $\sigma$ be a winning strategy in $G_{Set}(\kappa,\lambda)$. Consider a run of the game where Player II plays via $\sigma$ and Player I  plays at stage $\alpha<\rho$ a legal $\kappa$-suitable model $M_\alpha$ such that $C_\alpha\in M_\alpha$. Since $\rho<\lambda$, the run reaches stage $\rho$, and we can make Player I play any $\kappa$-suitable model $M_\rho$ including $\bigcup_{\alpha<\rho}M_\alpha$.Let $X_\rho$ be the response of $\sigma$. Then $X_\rho$ generates a $\kappa$-complete ultrafilter $U$ on $M_\rho$. Although $U$ might not extend $\Cub_\kappa\cap M$, by Theorem \ref{Thm: moving to ext of Club} and the subsequent Remark \ref{remark: adapts to models}, there is a $\kappa$-complete $M_\rho$-ultrafilter such that $\Cub_\kappa\subseteq M_{\rho}$ and $W\leq_{RB}U$. The Rudin-Blass projection will project $X_\rho$ to a set $Y$ which is a $\subseteq^*$-lower bound for $W$. Since $\mathcal{C}\subseteq M_\rho$, $\overline{Y}$ (the closure of $Y$) is a club which is a $\subseteq^*$-lower bound for $\mathcal{C}$, as desired.
\end{proof}
The key consequence of the claim is that (2) implies that \(2^\kappa\) is regular. This is because (2) implies \(2^\kappa = \mathfrak{b}_\kappa\), and \(\mathfrak b_\kappa\) is regular.

We now turn to the proof of the equivalence of (1), (2), and (3). The fact $(1)$ implies $(2)$ is trivial. To show that $(2)$ implies $(3)$, let $\sigma$ be a winning strategy for Player II in the game $G_{Set}(\kappa,2^\kappa)$. Let $\l A_\alpha\mid \alpha<2^\kappa\r$ be an enumeration of $P(\kappa)$. Consider the run of $G_{Set}(\kappa,2^\kappa)$ in which Player II plays by \(\sigma\) and at stage $\alpha < 2^\kappa$, Player $I$ plays a legal $\kappa$-suitable model $M_\alpha$ that contains $A_\alpha$ and all the sets $\l X_\beta\mid \beta<\alpha\r$ produced by $\sigma$ in the previous stages of the run. Let $U$ be the filter generated by  $\l X_\alpha\mid \alpha<2^\kappa\r$. Then \(U\) is indeed an $P_{2^\kappa}$-point ultrafilter since $U$ measures every set in $\bigcup_{\alpha<2^\kappa} M_\alpha$, and if $\mathcal{A}\subseteq U$ is of size less than $2^\kappa$, then since $2^\kappa$ is regular, and there is $\alpha<2^\kappa$ such that $\mathcal{A}\subseteq M_\alpha$. By the definition of the game $G_{Set}(\kappa,2^\kappa)$, the set $X_{\alpha+1}$ is a pseudo intersection of $U\cap M_\alpha$ and therefore of $\mathcal{A}$.
    Finally, to see that $(3)$ implies $(1)$, we fix a $P_{2^\kappa}$-point $U$.  
    The winning strategy for Player II is defined at stage $\alpha$ to return a set $X_\alpha$ that diagonalizes $U\cap M_\alpha$. Such a set $X_\alpha$ exists since $U$ is a $P_{2^\kappa}$-point and since $|M_\alpha|<2^\kappa$.
\end{proof}

In fact, a weaker hypothesis than a winning strategy in \(G_{Set}(\kappa,2^\kappa)\) already has consistency strength beyond \(o(\kappa) = \kappa^{++}\):
\begin{theorem}
    If Player II has a winning strategy in $G_{NSet}(\kappa,\kappa^++1)$, then there is an inner model with a $\mu$-measurable cardinal. 
\end{theorem}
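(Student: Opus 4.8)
The plan is to argue by contradiction and to adapt the core-model analysis of Theorem~\ref{thm: continuity points} (Gitik's argument) to the $M$-ultrafilter produced by a winning strategy. So suppose there is no inner model with a $\mu$-measurable cardinal; then the core model $K$ exists and is small enough for Schindler's theorem \cite{schindler_2006} and the covering lemma to apply. First note that a winning strategy for Player II in $G_{\text{NSet}}(\kappa,\kappa^++1)$ is a fortiori one in $G_{\text{Set}}(\kappa,\kappa^++1)$, so by the Claim in the preceding proposition $\kappa^++1\le\mathfrak b_\kappa$, i.e.\ $\mathfrak b_\kappa\ge\kappa^{++}$; in particular $2^\kappa>\kappa^+=(2^\kappa)^K$, which is exactly the slack the argument exploits (if $2^\kappa=\kappa^+$ the hypothesis is vacuous). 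Now fix a winning strategy $\sigma$ and let Player I play an increasing, continuous-at-limits, internally approachable sequence $\langle M_\alpha:\alpha\le\kappa^+\rangle$ of $\kappa$-suitable models such that $M:=\bigcup_{\alpha<\kappa^+}M_\alpha$ and $M_{\kappa^+}$ each contain $P^K(\kappa)$, every length-$\kappa$ sequence of subsets of $\kappa$ lying in $K$, and an initial segment of $K$ tall enough to compute $K$ correctly below the heights relevant to the argument (these are bounded, by the anti-large-cardinal hypothesis, since the extenders that occur will all be measures). Writing $F_\alpha=\sigma(\langle M_\beta,F_\beta:\beta<\alpha\rangle)$ for Player II's responses, with $\subseteq^*$-generator $X_\alpha$, set $U_M:=\bigcup_{\alpha<\kappa^+}(F_\alpha\cap M_\alpha)$ and $A:=X_{\kappa^+}$. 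Then $U_M$ and $U_{M_{\kappa^+}}:=F_{\kappa^+}\cap M_{\kappa^+}$ are $M$-normal, resp.\ $M_{\kappa^+}$-normal, $M$-ultrafilters, and they are $\kappa$-complete in the sense of extending to the $\kappa$-complete $V$-filter $\bigcup_{\alpha<\kappa^+}F_\alpha$; hence their ultrapowers are well-founded, and $A$ is a $\subseteq^*$-lower bound for $U_{M_{\kappa^+}}$.

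Next, set $U^*:=F_{\kappa^+}\cap P^K(\kappa)=U_{M_{\kappa^+}}\cap K$. Since $P^K(\kappa)\subseteq M_{\kappa^+}$, this is a $K$-normal, $\kappa$-complete $K$-ultrafilter, $A$ is a pseudo-intersection of it, and in fact $U^*=\{X\in P^K(\kappa):A\subseteq^* X\}$ (using only that $A$ is unbounded and refines every member of $U^*$). Because $U^*$ is $K$-normal it contains every $K$-club, and a Fodor-type argument gives $\text{Lim}(X)\in U^*$ for every $X\in U^*$; hence $B:=\overline{A}$ (the closure of $A$ in $\kappa$) is again a pseudo-intersection of $U^*$, and being a club it satisfies $B\cap\text{Lim}(B)$ unbounded. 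Running the restriction $j\upharpoonright K^{M_{\kappa^+}}$ of the ultrapower $j:M_{\kappa^+}\to\Ult(M_{\kappa^+},U_{M_{\kappa^+}})$ as a normal iteration of $K^{M_{\kappa^+}}$ — here one invokes the $M$-ultrafilter form of Schindler's theorem, robust in the spirit of Remark~\ref{remark: adapts to models} — one finds that its first extender $E_0$ has critical point $\kappa$, and, by the anti-$\mu$-measurable hypothesis together with the initial segment condition (exactly as in the proof of Theorem~\ref{thm: continuity points}), $E_0$ is a measure; since the derived normal measure of $E_0$ is precisely $U^*$, it follows that $U^*\in K$ and $U^*$ sits on the $K$-sequence.

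Now carry out the iteration analysis. Consider the linear iteration $\langle K_\xi,i_{\xi\eta}\rangle$ of $K$ by $U^*$ and its images. As in the proof of Theorem~\ref{thm: continuity points}, the club $B$ — diagonalizing $U^*$, with unboundedly many closure points — determines a stage $\nu$ that is a limit of limits of stages at which images of $U^*$ are applied, and $\alpha^*:=i_{0,\nu}(\kappa)$ is a limit point of the image of $B$; the measure $i_{0,\nu}(U^*)$ used at stage $\nu$ is then definable, over the ambient ultrapower model, as the set of $X\subseteq\alpha^*$ in its core model containing a tail of that image of $B\cap\alpha^*$. By the maximality of the core model this measure lies back inside it, hence inside $i_{0,\nu+1}(K)=\Ult(K_\nu,i_{0,\nu}(U^*))$, so $\alpha^*$ is $\mu$-measurable in $K_\nu$; pulling back along the elementary $i_{0,\nu}$ makes $\kappa$ $\mu$-measurable in $K$ — contradiction.

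The one step that needs genuine care — the technical heart — is ensuring that $B$ (equivalently $A$) lies in a $\kappa$-suitable model whose ultrapower we take and whose core model agrees with $K$, so that the image of $B$ is available in that ultrapower: the generator $A=X_{\kappa^+}$ is external to all of the played models $M_\alpha$. When $(\kappa^+)^K<\kappa^+$ this is easy: arrange $P^K(\kappa)\subseteq M_{\beta_0}$ for some $\beta_0<\kappa^+$ and replace $A$ by $X_{\beta_0+1}\in M$, which already diagonalizes $U^*$ and whose closure then lies in $M$. In general one must instead exploit that $F_{\kappa^+}$ is $\subseteq^*$-generated by the single set $A$ to pass to a $\kappa$-suitable model $\mathfrak M\ni A$ with $K^{\mathfrak M}=K^{M_{\kappa^+}}$ on which the same iteration analysis runs, or appeal to covering for $K$ to see that $(\kappa^+)^K<\kappa^+$ must hold here; this bookkeeping, together with the $M$-ultrafilter version of Schindler's theorem, is where the bulk of the work lies.
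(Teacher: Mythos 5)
Your overall strategy---reduce to the core-model analysis of Theorem \ref{thm: continuity points} by extracting from the strategy a $K$-normal $K$-ultrafilter $U^*$ together with a diagonalizing set---is reasonable in outline, but the two places you explicitly defer are exactly where the proof lives, and neither of your proposed patches closes them. First, the external-generator problem. Every model $M_\alpha$ with $\alpha<\kappa^+$ has size at most $\kappa$, so when $(\kappa^+)^K=\kappa^+$ (which you cannot rule out; weak covering, if anything, pushes toward equality rather than the inequality $(\kappa^+)^K<\kappa^+$ you hope to extract from it) the family $P^K(\kappa)$ is only captured by the final model, and the generator $A=X_{\kappa^+}$ is external to every model the strategy has measured. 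To run any iteration or ultrapower analysis you need an elementary embedding that moves $A$ (or $B$), i.e.\ an ultrapower of a model containing $A$; but an ultrapower of your $\mathfrak M\ni A$ requires an $\mathfrak M$-ultrafilter deciding the new subsets of $\kappa$ built from $A$, and the only source of such ultrafilters in this setting is the strategy $\sigma$ itself. The missing idea is to use the single extra move of the length-$(\kappa^++1)$ game \emph{twice}, on two different legal continuations of the same length-$\kappa^+$ run: once on a model containing $H(\kappa^+)\cap K$, producing $A$ and hence $U^*\in K$ (immediately, by maximality, since $U^*$ is a countably complete $K$-ultrafilter---no iteration needed), and once on $M=H(\kappa^+)\cap L[A,T]$ (where $T\subseteq\kappa^+$ codes $H(\kappa^+)\cap K$), producing a second generator and an $M$-ultrafilter $W$ whose ultrapower $j:L[A,T]\to N_W$ has $U^*$ as an \emph{element} of $K^{N_W}$. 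The paper then finishes with no iteration at all: the $K$-ultrafilter $D$ on $V_\kappa\cap K$ derived from $j$ with seed $U^*$ lies in $K$ by maximality, and the factor-map computation $[\id]_D=U^*$ exhibits $D$ as a $\mu$-measure.

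Second, even granting an embedding that moves $A$, your reduction to Theorem \ref{thm: continuity points} needs a pseudo-intersection with unboundedly many closure points, and the step ``hence $B:=\overline A$ is again a pseudo-intersection of $U^*$'' does not follow from normality as asserted. From $A\subseteq^* Y\cap\mathrm{Lim}(Y)$ one only gets $\overline A\subseteq^* Y\cup\mathrm{Lim}(Y)$, and iterating $Y\mapsto Y\cap\mathrm{Lim}(Y)$ does not converge to a subset of $Y$. The Remark following Theorem \ref{thm: continuity points} shows that a set diagonalizing a normal measure can consist entirely of non-closure points, so this hypothesis is genuinely extra and must be arranged rather than derived; the raw output of the strategy gives you no control over it. This is a second, independent reason the paper's proof of this theorem does not route through Theorem \ref{thm: continuity points}.
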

\begin{proof}
    We may assume there is no inner model with a strong cardinal, and let \(K\) be the core model. We will repeatedly use the fact that if \(U\) is a \(K\)-ultrafilter and the ultrapower of \(K\)
    by \(U\) is well-founded, then \(U\in K\). This follows from \cite[Theorem 8.13]{SteelBook}.
    
    Let \(\sigma\) be a strategy for Player II in $G_{NSet}(\kappa,\kappa^++1)$. 
    Let \(\langle S_\alpha : \alpha < \kappa^+\rangle\) enumerate \(H(\kappa^+)\cap K\).
    We construct a run \(r\) of $G_{NSet}(\kappa,\kappa^+)$ in which Player I plays \(\kappa\)-suitable models \(M_\alpha\) with \(S_\alpha\in M_\alpha\) and Player II respondes according to \(\sigma\).
    Let \(A = \sigma(r^{\smallfrown}\l H(\kappa^+)\cap K\r)\) and let \(U\) be the \(K\)-normal \(K\)-ultrafilter
    \(\subseteq^*\)-generated by \(A\). Note that \(U\in K\) since \(U\) is a \(V\)-countably complete \(K\)-ultrafilter.

    Let \(T\subseteq \kappa^+\) be such that \(H(\kappa^+)\cap K\in L[T]\). Let \(N = L[A,T]\). Finally, let \(M = H(\kappa^+)\cap N\). Note that \(|H(\kappa^+)\cap N| =  \kappa^+\) since \((2^\kappa)^N \leq \kappa^+\) by a standard condensation argument. 

    We have that \(H(\kappa^+)\cap K = H(\kappa^+)\cap K^N\) since above \(\aleph_2\), \(K\) is obtained by stacking mice \cite[Lemma 3.5]{Gitik_Schindler_Shelah_2006}.
    
    Let $B = \sigma(r^\smallfrown\langle M\rangle)$, and let 
    \(W\) be the \(M\)-normal \(M\)-ultrafilter \(\subseteq^*\)-generated by \(B\).
    Let \(j : N \to N_W\) be the ultrapower of \(N\) by \(W\), which is well-founded since \(W\) is (truly) countably complete. 
    Since \(W\cap K = U\in K\),
    \(P(\kappa)\cap K = P(\kappa) \cap K^{N_W}\). 
    Since \(A \in N_W\) and \(P(\kappa)\cap K\in N_W\),
    \(U\in N_W\). Using the closure of \(K^{N_W}\) under \(N_W\)-countably complete ultrafilters, 
    \(U\in K^{N_W}\).

    Let \(D\) be the \(K\)-ultrafilter on \(V_\kappa\cap K\) derived from \(j\) and \(U\). 
    Then \(D\in K\) since (again) \(K\) is closed under countably complete ultrafilters. 
    Let \[k : (H(\kappa^+)\cap K)_D \to j(H(\kappa^+)\cap K)\] be the factor map.
    Note that \(k([\id]_D) = U\) and 
    \(k\restriction (\kappa+1)\) is the identity, so \([\id]_D = U\cap (H(\kappa^+)\cap K)_D =  U\).
    Therefore \(D\) witnesses that \(\kappa\) is \(\mu\)-measurable in \(K\).
\end{proof}
\begin{remark}
    A somewhat similar argument can be used to show that if Player II has a winning strategy in $G_{Set}(\kappa,\kappa^++2)$, then there is an inner model with a $\mu$-measurable cardinal. We leave open the question of whether a winning strategy for Player II in the game $G_{Set}(\kappa,\kappa^++1)$ already implies an inner model with a $\mu$-measurable cardinal.  
\end{remark}
\section{Questions}
\begin{question}
    What is the consistency strength of having $\mathfrak{t}_\kappa>\kappa^+$ for a regular cardinal $\kappa>\omega$?
\end{question}
Note that by Zapletal \cite{ZapletalSplitting}, this is at least $o(\kappa)=\kappa^{++}$. In this paper, we show that for a measurable cardinal $\kappa$, the consistency strength jumps above $o(\kappa)=\kappa^{++}$.
\begin{question}
    What is the exact consistency strength of 
    the existence of a $P_{\kappa^{++}}$-point on an uncountable cardinal \(\kappa\)? 
\end{question}

\begin{question}
    What is the consistency strength of Player II having a winning strategy in the game $G_{Filter}(\kappa,\kappa^++1)$?
\end{question}
Note that there is an upper bound of $o(\kappa)=\kappa^{++}$.
\bibliographystyle{amsplain}
\bibliography{ref}

\end{document}